\numberwithin{equation}{section}
\newcommand{\rn}{\mathbb{R}^n}
\def\A{\mathcal A}
\def\H{\mathcal H}
\def\R{\mathbb R}
\def\N{\mathbb N}
\newcommand{\dist}{\mathop{\mathrm{dist}}}
\def\e{\varepsilon}
\def\s{\sigma}
\def\vphi{\varphi}
\def\l{\lambda}
\def\dist{{\rm dist}}
\renewcommand{\a}{\alpha}
\renewcommand{\b}{\beta}
\renewcommand{\d}{\mathrm{d}}
\renewcommand{\l}{\lambda}
\newcommand{\ov}{\overline}
\newcommand{\diver}{{\rm div }}
\newcommand{\rme}{\vrho_{m,\e}}
\newcommand{\fme}{f_{m,\e}}
\newcommand{\ume}{u_{m,\e}}
\newcommand{\cme}{c_{m,\e}}
\newcommand{\umek}{u_{m,\e_k}}
\newcommand{\rmek}{\vrho_{m,\e_k}}
\newcommand{\fmek}{f_{m,\e_k}}
\def\B{\mathcal{B}}
\def\A{\mathcal{A}}
\def\d{\delta}
\def\vrho{\varrho}
\newtheorem*{theorem*}{Theorem}
\newtheorem{theorem}{Theorem}[section]
\newtheorem{lemma}[theorem]{Lemma}
\newtheorem{proposition}[theorem]{Proposition}
\newtheorem*{proposition*}{Proposition}
\theoremstyle{remark}
\newtheorem{example}[theorem]{Example}
\newtheorem{remark}[theorem]{Remark}
\newtheorem*{remark*}{Remark}
\title[Second order regularity for degenerate elliptic equations]{Second order regularity for degenerate $p$-Laplace type equations with log-concave weights}
\begin{document}

\begin{abstract}
We consider weighted $p$-Laplace type equations with homogeneous Neumann boundary conditions in convex domains, where the weight is a log-concave function which may degenerate at the boundary. In the case of bounded domains, we provide sharp global second-order estimates. For unbounded domains, we prove local estimates at the boundary. The results are new even for the case $p=2$.
\end{abstract}

\author{Carlo Alberto Antonini \textsuperscript{1}}
\address{\textsuperscript{1}Dipartimento di Matematica e Informatica ``Ulisse Dini'',
Universit\`a di Firenze,
Viale Morgagni 67/A, 50134
Firenze,
Italy}

\email{carloalberto.antonini@unifi.it, carlo.antonini@unimi.it}
\urladdr{}

\author{Giulio Ciraolo \textsuperscript{2}}
\address{\textsuperscript{2}Dipartimento di Matematica  ``Federigo Enriques'', Universit\`a di Milano, Via Cesare Saldini 50, 20133 Milano, Italy}
\email{giulio.ciraolo@unimi.it}
\urladdr{ }

\author{Francesco Pagliarin \textsuperscript{3}}
\address{\textsuperscript{3} Institut Camille Jordan, Universit\'e Claude Bernard Lyon 1, 43 boulevard du 11 Novembre 1918
69622 Villeurbanne cedex, France}
\email{pagliarin@math.univ-lyon1.fr}
\urladdr{}

\date{\today}

\subjclass[2020]{35B65, 35D30, 35J25, 35J62, 35J70, 35J92}
\keywords{Degenerate elliptic equations, Neumann problems, boundary regularity, second-order derivatives, convex domains, log-concave weights}

\maketitle

\section{Introduction}\label{intro1}

In this paper we are interested in second order regularity for solutions to weighted elliptic equations of the form
\begin{equation} \label{eq_main1}
-\mathrm{div}\big(\vrho(x) |\nabla u|^{p-2}\nabla u\big) = \vrho(x)\, f \quad\text{in $\Omega$\,,}
\end{equation}
under homogeneous Neumann boundary condition.
Here $p>1$, $\Omega$ is a convex domain of the Euclidean space $\R^n$, $n\geq 2$, and $\vrho:\Omega\to \R$ is a positive log-concave function which may vanish at the boundary. For bounded domains $\Omega$ we provide global estimates, while local estimates at the boundary will be given for unbounded domains.

When $\Omega$ is bounded, $p=2$ and $\vrho\equiv 1$, equation \eqref{eq_main1} reproduces the classical Poisson equation, for which second-order regularity is well-known, provided the boundary of $\Omega$ is smooth enough \cite[Chapters 8.3-8.4]{gt}, \cite[Chapter 3]{gris}. Precisely, there holds the equivalence
\begin{equation}\label{equiv:1}
    u\in W^{2,2}(\Omega)\iff f\in L^2(\Omega)\,.
\end{equation}

The generalization of such result for $p$-Laplace type equations, with $p\neq 2$ and $\vrho$ sufficiently regular and bounded away from zero, has been recently established in \cite{accfm,cia,miao, guarnotta}, and states that
\begin{equation}\label{equiv:2}
    |\nabla u|^{p-2}\nabla u\in W^{1,2}(\Omega)\iff  f\in L^2(\Omega)\,,
\end{equation}
together with quantitative estimates. We notice that, for convex domains $\Omega$, no additional assumption must be assumed on $\partial \Omega$. We also mention \cite{acf} and \cite{bv} for local results in anisotropic and RCD spaces, respectively, and \cite{akm, bdm, bsv, bcds, bcdks,cms, cia19, dks, DS, km, mms} for further lines of research concerning the regularity of solutions to $p$-Laplace type equations. 

It is clear that if $p=2$ then equivalence \eqref{equiv:2} reduces to \eqref{equiv:1}.
Thus, denoting by
\begin{equation*}
    \A(\nabla u)=|\nabla u|^{p-2}\nabla u
\end{equation*}
the so-called \textit{stress field}, \eqref{equiv:2} implies that $\A(\nabla u)$ encodes second order regularity of solutions to nonlinear equations of $p$-Laplacian type. We stress out the fact that the nondegeneracy of $\vrho$ (i.e., $\vrho>0$ in $\overline \Omega$) plays a fundamental role and for a general weight $\vrho \geq 0$ one cannot expect second order regularity of solutions even in the linear case.

In this paper, we deal with log-concave weights $\vrho$, i.e., a nonnegative function whose logarithm is concave. Therefore, we may write 
\begin{equation}\label{log:concave}
    \vrho(x)=e^{-h(x)}\,,\quad x\in \Omega\,,
\end{equation}
for some convex function $h: \Omega \to \R$. Moreover, if one takes a convex extension $h:\overline{\Omega}\to \R\cup\{+\infty\}$ and the corresponding weight $\vrho: \overline{\Omega}\to [0,+\infty)$, then then  the zero level set $\{\vrho=0\}$ coincides with $\{h=+\infty\}$.


Hence, given a log-concave weight $\vrho$ as above, we aim to show that
\begin{equation}\label{equiv:3}
   \A(\nabla u)=|\nabla u|^{p-2}\nabla u\in W^{1,2}(\Omega;\vrho)\iff f\in L^2(\Omega;\vrho)
\end{equation}
for solutions to \eqref{eq_main1} under homogeneous Neumann boundary conditions on $\partial \Omega$. Our goal is also to provide (possibly) sharp quantitative estimates.
 
Before describing our main results, it will be convenient to introduce some notation. For $q\geq 1$, we define the weighted Lebesgue space relative to $\vrho$ as
\begin{equation}
    L^q(\Omega;\vrho)=\Big\{v:\Omega\to\R\,\text{ Borel measurable}: \int_\Omega v^q\,\vrho\,dx<\infty \Big\}\,,
\end{equation}
endowed with the norm
\begin{equation}
    \|v\|_{L^q(\Omega;\vrho)}=\Big(\int_\Omega |f|^q\,\vrho\,dx\Big)^{1/q}\,.
\end{equation}
Accordingly, the weighted Sobolev space is defined by
\begin{equation}
    W^{1,q}(\Omega;\vrho)=\Big\{v\in L^q(\Omega;\vrho):\,\nabla v\in L^q(\Omega;\vrho)\Big\}\,,
\end{equation}
where $\nabla v$ is the distributional gradient of $v$. Standard arguments show that $W^{1,q}(\Omega;\vrho)$ is a Banach space when endowed with the norm
\begin{equation}\label{norm:sobrho}
    \|v\|_{W^{1,q}(\Omega;\vrho)}=\|v\|_{L^q(\Omega;\vrho)}+\|\nabla v\|_{L^q(\Omega;\vrho)}\,,
\end{equation}
which makes it uniformly convex, and in particular reflexive, when $q>1$. As usual, we denote by $q'=q/(q-1)$ the conjugate exponent of $q>1$.

Let us briefly review some results on weighted Sobolev spaces and quasilinear equations, whose literature is far too rich to be fully described. Classical references are the monographs \cite{kuf,kuf1,tur}, mostly dealing with weights satisfying the so-called $A_p$-condition, see also \cite{bdgp} and references therein for related and more recent results. For very general weights fulfilling certain capacitary conditions, and for which weighted Sobolev inequalities are valid, we refer to the monograph \cite{stred}.
Boundedness and H\"older continuity for solutions to weighted quasilinear equations are studied in \cite{mont,mont1}. Higher order H\"older regularity has been recently established in \cite{terracini,terracini1,terracini2} (when $p=2$) for weights which behave like $\dist(x,\Gamma)^a$ for $a>-1$, where $\Gamma$ is a regular hypersurface.

\medskip

In this paper, we study Neumann boundary value problems of the form
\begin{equation}\label{eq:neu}
    \begin{cases}
        -\mathrm{div}\big(\vrho\,|\nabla u|^{p-2}\nabla u \big)=\vrho\,f\quad & \text{in $\Omega$}
        \\
        \vrho\,\partial_\nu u=0\quad & \text{on $\partial \Omega$,}
    \end{cases}
\end{equation}
with compatibility condition
\begin{equation}\label{f:compat}
    \int_\Omega f\,\vrho\,dx=0\,.
\end{equation}
Here $\nu$ is the outward normal to $\partial \Omega$, and $\partial_\nu u$ is the normal derivative of $u$.

Equation \eqref{eq:neu} has to be interpreted in a weak sense. More precisely, we say that $u\in W^{1,p}(\Omega;\vrho)$
is a weak solution to \eqref{eq:neu} if
\begin{equation}\label{def:neu}
    \int_\Omega |\nabla u|^{p-2}\nabla u\cdot \nabla \vphi\,\vrho\,dx=\int_\Omega f\,\vphi\,\vrho\,dx\,,
\end{equation}
for all test functions $\vphi \in W^{1,p}(\Omega;\vrho)$. Moreover, equation \eqref{eq:neu} 
arises as the Euler-Lagrange equations for the energy functionals
\begin{equation} \label{functional}
\mathcal F_\vrho(v) = \frac{1}{p}\int_{\Omega} \vrho(x) |\nabla v|^p\,dx + \int_{\Omega} \vrho(x) f\,v\, dx \,,\quad v\in W^{1,p}(\Omega;\vrho)\,.
\end{equation}

Our first main result concerning Neumann problems is the following global regularity result.
\begin{theorem}\label{main:thmneu}
  Let $p>1$, $\Omega\subset \R^n$ be a bounded convex domain, $\vrho: \Omega\to (0,\infty)$ be a log-concave function on $\Omega$, and suppose $f\in L^2(\Omega;\vrho)\cap L^{p'}(\Omega;\vrho)$ satifies \eqref{f:compat}. Assume that $u\in W^{1,p}(\Omega;\vrho)$ is a weak solution to the Neumann problem \eqref{eq:neu}. Then
  \begin{equation}\label{thesis}
      \A(\nabla u)=|\nabla u|^{p-2}\nabla u\in W^{1,2}(\Omega;\vrho)\,,
  \end{equation}
 and there exist explicit constants $C_0=C_0(n,p)$ and
$C_1=C_1(n,p,d_\Omega,\vrho)$ such that 
  \begin{align}\label{stima}
     &\int_\Omega \big|\nabla\A(\nabla u)\big|^2\,\vrho\,dx  \leq C_0\,\int_\Omega f^2\,\vrho\,dx
     \\
    \label{stima2} & \int_\Omega \big|\A(\nabla u)\big|^2\,\vrho\,dx \leq C_1\,\int_\Omega f^2\,\vrho\,dx+C_1\,\Big(\int_\Omega |f|^{p'}\,\vrho\,dx \Big)^{2/p'}\,,
  \end{align}
  where $d_\Omega$ denotes the diameter of $\Omega$.
\end{theorem}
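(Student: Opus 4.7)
The plan is to derive \eqref{stima} as an a priori estimate on a fully regularized version of \eqref{eq:neu} and then pass to the limit; the key ingredient is a weighted Bochner--Reilly identity in which the convexity of $\Omega$ and the log-concavity of $\vrho$ each furnish a nonnegative contribution. Estimate \eqref{stima2} is then deduced from \eqref{stima} through a weighted Poincar\'e-type inequality. Concretely, I would first exhaust $\Omega$ from inside by smooth, uniformly convex domains $\Omega_\e\Subset\Omega$ with $\Omega_\e\nearrow\Omega$. Writing $\vrho=e^{-h}$, I would mollify $h$ and add a small strictly convex quadratic so that $\vrho_{\e,\d}=e^{-h_{\e,\d}}$ is smooth, strictly positive, and log-concave on $\overline{\Omega_\e}$. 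I would also replace the nonlinearity $|\xi|^{p-2}\xi$ by its $\mu$-regularization $(\mu+|\xi|^2)^{(p-2)/2}\xi$ and approximate $f$ by a smooth $f_\e$ preserving the weighted compatibility condition. By standard Neumann theory for uniformly elliptic quasilinear equations (e.g.\ Lieberman), this yields classical solutions $u_{\e,\d,\mu}\in C^{2,\a}(\overline{\Omega_\e})$ with bounded gradient, making all forthcoming integrations by parts rigorous.

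\textbf{Core a priori identity.} Rewriting the regularized PDE in the non-divergence form $-\diver\A^\mu(\nabla u)+\nabla h_{\e,\d}\cdot\A^\mu(\nabla u)=-f_\e$, I would square, multiply by $\vrho_{\e,\d}$, and integrate. The main work is to transform $\int_{\Omega_\e}(\diver\A^\mu)^2\,\vrho_{\e,\d}\,dx$ via two integrations by parts, absorbing the drift $\nabla h_{\e,\d}\cdot\A^\mu$ into the quadratic form of $D^2 h_{\e,\d}$. After rearrangement, one expects a weighted Reilly-type identity of the schematic form
\begin{align*}
\int_{\Omega_\e} f_\e^{\,2}\,\vrho_{\e,\d}\,dx\ =\ &\int_{\Omega_\e}|\nabla\A^\mu|^2\,\vrho_{\e,\d}\,dx+\int_{\Omega_\e}\la D^2 h_{\e,\d}\,\A^\mu,\A^\mu\ra\,\vrho_{\e,\d}\,dx\\
&+\int_{\pa\Omega_\e}\II(\A^\mu,\A^\mu)\,\vrho_{\e,\d}\,d\H^{n-1}+R,
\end{align*}
where the Neumann condition $\A^\mu\cdot\nu=0$ reduces the boundary contribution to the second fundamental form $\II\geq 0$ of $\pa\Omega_\e$, the bulk term $\la D^2 h_{\e,\d}\,\A^\mu,\A^\mu\ra\geq 0$ by log-concavity, and $R$ collects remainder terms coming from the antisymmetric part $\pa_i\A^{\mu,j}-\pa_j\A^{\mu,i}$ (which vanishes when $p=2$) and can be pointwise controlled by $|\nabla\A^\mu|^2$ via elementary matrix inequalities. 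Dropping the nonnegative terms yields \eqref{stima} with a constant $C_0=C_0(n,p)$ independent of $\e,\d,\mu$ and of $\vrho$. Weak compactness in $W^{1,2}(\Omega_\e;\vrho_{\e,\d})$ together with weak lower semicontinuity of $\|\nabla\A\|_{L^2(\vrho)}$ then transfers \eqref{stima} to the original weak solution $u$.

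\textbf{$L^2$ bound on $\A$.} To obtain \eqref{stima2}, I would test \eqref{def:neu} with $u-\ov u$ (where $\ov u$ is the weighted mean) and apply a weighted Poincar\'e inequality of Brascamp--Lieb type, which is available on bounded convex domains with log-concave densities and an explicit constant depending on $d_\Omega$ and $\vrho$. This bounds $\|\nabla u\|_{L^p(\Omega;\vrho)}$, hence $\|\A(\nabla u)\|_{L^{p'}(\Omega;\vrho)}$, by $\|f\|_{L^{p'}(\Omega;\vrho)}$. A second application of the same weighted Poincar\'e, this time to $\A(\nabla u)-\ov{\A(\nabla u)}$, combined with \eqref{stima}, bounds its $L^2(\Omega;\vrho)$-deviation by $\|f\|_{L^2(\Omega;\vrho)}$; the mean $\ov{\A(\nabla u)}$ is in turn controlled by the $L^{p'}$ estimate just obtained, so that combining the two yields \eqref{stima2}.

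\textbf{Main obstacle.} The crux is the weighted Reilly identity above. The two integrations by parts naturally produce the symmetric combination $\sum_{ij}(\pa_i\A^{\mu,j})(\pa_j\A^{\mu,i})$ rather than $|\nabla\A^\mu|^2$ directly, and reconciling the two while extracting the $\II$-boundary term with the correct sign is delicate. Equally delicate is orchestrating the interaction between the drift $\nabla h_{\e,\d}\cdot\A^\mu$ and the Hessian $D^2 h_{\e,\d}$ so that all sign-definite contributions collect on the same side of the inequality, and handling the degenerate limit $\mu\to 0^+$ at the critical set $\{\nabla u=0\}$ when $p\neq 2$. One must also keep careful bookkeeping to ensure that $C_0$ remains independent of every regularization parameter, and that $C_1$ depends on $\vrho$ only through the quantities entering the weighted Poincar\'e constant.
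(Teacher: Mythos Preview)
Your proposal is correct and follows essentially the same route as the paper: a layered regularization (domain, weight, nonlinearity, source), the weighted Reilly identity of Proposition~\ref{prfund:neu} in which convexity of $\Omega$ and of $h$ each supply a nonnegative term, the algebraic inequality $\mathrm{tr}[(\nabla\A_\e)^2]\geq\overline{c}(p)\,|\nabla\A_\e|^2$ from \eqref{elem:ineq}--\eqref{eigen:2} (your ``remainder $R$'' is more accurately the gap in this pointwise inequality rather than a separate term in the identity), and the weighted Poincar\'e inequality \eqref{poinc:compl} for \eqref{stima2}. The only superfluous ingredient in your scheme is the strictly convex quadratic added to $h$: plain convexity of the mollified $h$ already makes the Hessian contribution nonnegative.
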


As we already mentioned, the novelty of Theorem \ref{main:thmneu} lies in the type of weight that we consider. As far as we know, the presence of a log-concave weight $\varrho$ which may vanish at the boundary (or on a portion of it) is not covered by the existing literature even in the case $p=2$.

 The estimate in \eqref{stima} shows the equivalence between the weighted $L^2$-norm of $\nabla \A(\nabla u)$ and that of the source term $f$, with quantitative constant $C_0$ depending only on $n$ and $p$. This estimate is sharp. Indeed in the case $p=2$ and $\vrho=1$ we obtain that $C_0=1$ which is the optimal constant for \eqref{stima}. 
  
 For what concerns \eqref{stima2}, we notice that in the case $p<2$ we need to assume $f\in L^{p'}(\Omega;\vrho)$ in order to obtain a uniform energy estimate for $\nabla u$, see Lemma \ref{lemma:en1} below. This additional integrability assumption compensates for the lack of energy estimates, and in particular the lack of a theory of generalized solution as in \cite[Theorem 3.8]{cia17}, for the weighted equation \eqref{eq:neu}.
 
We also mention that the estimates in Theorem \ref{main:thmneu} do not depend on the particular chosen extension of $\vrho$ to $\partial \Omega$. This shows a relevant robustness of the approach we adopted for this type of boundary value problems.

\medskip

Our next main results deal with Neumann problems \eqref{eq:neu} in unbounded convex domains $\Omega$. In this case, our results are local in nature, and we need to distinguish the singular case $1<p\leq 2$ and the degenerate one $p>2$.
We first provide some definitions: for $\Omega\subset \R^n$ convex possibly unbounded domain, we write
\begin{equation}
    L^q_{loc}(\overline{\Omega};\vrho)=\Big\{v:\Omega\to \R: v\in L^q(\Omega\cap K;\vrho),\,\text{for every compact set $K\subset \R^n$}\Big\}\,,
\end{equation}
and also
\begin{equation}
    W^{1,p}_{loc}(\overline{\Omega};\vrho)=\Big\{u:\Omega\to \R:\, u\in W^{1,p}(\Omega\cap K;\vrho), \,\text{for every compact set $K\subset \R^n$}\Big\}\,.
\end{equation}
We say that $u\in W^{1,p}_{loc}(\overline{\Omega};\vrho)$ is a local solution to 
\begin{equation*}
     \begin{cases}
        -\mathrm{div}\big(\vrho\,|\nabla u|^{p-2}\nabla u \big)=\vrho\,f\quad & \text{in $\Omega$}
        \\
        \vrho\,\partial_\nu u=0\quad & \text{on $\partial \Omega$,}
    \end{cases}
\end{equation*}
if it satisfies
\begin{equation}\label{eq:neuloc}
    \int_\Omega |\nabla u|^{p-2}\nabla u\cdot \nabla \vphi\,\vrho\,dx=\int_\Omega f\,\vphi\,\vrho\,dx\,,\quad\text{for all $\vphi\in C^\infty_c(\R^n)$},
\end{equation}
where $C^{\infty}_c(\R^n)$ is the set of smooth, compactly supported function in $\R^n$.
We remark that such test functions may not vanish on $\partial \Omega$.
\\

 We consider local estimates around points $x_0$ on the boundary of $\Omega$. Indeed, since $\vrho$ is strictly positive in $\Omega$, interior estimates are already well established \cite{cia,miao}. Differently from Theorem \ref{main:thmneu}, local estimates are obtained under the further assumption that $\vrho$ admits a continuous extension up to $\partial \Omega$. This is needed since in the approximation argument we exploit an extension theorem for the weighted Sobolev space which requires the continuity of the weight, see Lemma \ref{lem:tech}.

 For $1<p\leq 2$ we obtain the following theorem.

\begin{theorem}\label{thm:loc1}
    Let $\Omega\subset \R^n$ be a convex domain and let $\varrho: \Omega \to (0,+\infty)$ be a log-concave function which can be continuously extended to $\overline{\Omega}$. Let $1<p\leq 2$ and let $u\in W^{1,p}_{loc}(\overline{\Omega};\vrho)$ be a weak local solution to \eqref{eq:neuloc}, with $f\in L^{p'}_{loc}(\overline{\Omega};\vrho)$. Then there exists $R>0$  such that, for every $x_0\in \partial \Omega$ and $R_0\leq  R$, we have
\begin{equation}
    \A(\nabla u)\in W^{1,2}(\Omega\cap B_{R_0}(x_0);\vrho)\,,
\end{equation}
and there exist constants 
$C_0=C_0(n,p)>0$ such that
 \begin{equation}\label{stimaloc}
     \int_{\Omega\cap B_{R_0}(x_0)} \big|\nabla\A(\nabla u)\big|^2\,\vrho\,dx  \leq C_0\,\int_{\Omega\cap B_{2R_0}(x_0)} f^{2}\,\vrho\,dx+\frac{C_0}{R_0^2}\, \int_{\Omega\cap B_{2R_0}(x_0)}|\A(\nabla u)|^2\,\vrho\,dx 
     \end{equation}
     with
     \begin{equation}
\int_{\Omega\cap B_{2R_0}(x_0)} \big|\A(\nabla u)\big|^2\,\vrho\,dx \leq \left( \int_{\Omega\cap B_{2R_0}(x_0)}\vrho\,dx\right)^{(2-p)/p}\,\left(\int_{\Omega\cap B_{2R_0}(x_0)} |\nabla u|^{p}\,\vrho\,dx \right)^{2/p'}\,.
  \end{equation}

\end{theorem}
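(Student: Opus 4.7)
My plan is to prove the two estimates separately. The H\"older-type bound on $\int_{\Omega\cap B_{2R_0}(x_0)}|\A(\nabla u)|^2 \vrho\,dx$ is elementary: since $|\A(\nabla u)|^2=|\nabla u|^{2(p-1)}$ and $2(p-1)\le p$ for $1<p\le 2$, I would split $|\nabla u|^{2(p-1)}\vrho=(|\nabla u|^p\vrho)^{2(p-1)/p}\cdot\vrho^{(2-p)/p}$ and apply H\"older's inequality with conjugate exponents $p/(2(p-1))$ and $p/(2-p)$. Using $2(p-1)/p=2/p'$ yields the claimed bound; the required integrability follows from $u\in W^{1,p}_{loc}(\overline\Omega;\vrho)$ together with the compactness of $\overline{B_{2R_0}(x_0)}$.

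For the Caccioppoli-type estimate \eqref{stimaloc}, the strategy is to localize the global argument underlying Theorem \ref{main:thmneu}. First, I would fix a smooth cutoff $\eta\in C^\infty_c(\R^n)$ with $\spt\eta\subset B_{2R_0}(x_0)$, $\eta\equiv 1$ on $B_{R_0}(x_0)$, and $|\nabla\eta|\le C/R_0$, and work on the bounded convex subdomain $\tilde\Omega=\Omega\cap B_{R_*}(x_0)$ for some $R_*>2R_0$; the threshold $R$ in the statement is chosen to accommodate this $R_*$ along with the hypotheses of the extension Lemma \ref{lem:tech}. Because $\eta$ vanishes in a neighborhood of the artificial piece $\partial B_{R_*}(x_0)\cap\Omega$ of $\partial\tilde\Omega$, any integral identity tested against $\eta^2$ sees only the genuine Neumann part $\partial\Omega\cap B_{R_*}(x_0)$. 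Next, I would extend $\vrho$ continuously past $\partial\Omega$ (possible by the continuity hypothesis), mollify it by convolution with a log-concave kernel to obtain smooth, strictly positive, log-concave weights $\vrho^\e$, smooth $f$ to $f^\e$, and use Lemma \ref{lem:tech} to extend $u$ so that a nondegenerate approximating Neumann problem on $\tilde\Omega$ is well posed. Solving these regularized problems yields smooth approximants $u^\e$ converging to $u$ in $W^{1,p}(\Omega\cap B_{2R_0}(x_0);\vrho)$ via standard monotonicity arguments. For each $u^\e$ the equation is uniformly elliptic, so the Bochner/Reilly-type identity underpinning Theorem \ref{main:thmneu} is available; testing it against $\eta^2$ in place of $1$, and discarding the non-negative contributions from the convexity of $\partial\Omega$ and of $h^\e=-\log\vrho^\e$, produces, after absorbing the $|\nabla\eta|^2$ cross terms via Young's inequality, the local inequality
\begin{equation*}
\int_{\tilde\Omega}\eta^2\,|\nabla\A(\nabla u^\e)|^2\,\vrho^\e\,dx \le C_0\int_{\tilde\Omega}\eta^2\,(f^\e)^2\,\vrho^\e\,dx+\frac{C_0}{R_0^2}\int_{\tilde\Omega}|\A(\nabla u^\e)|^2\,\vrho^\e\,dx.
\end{equation*}
Lower semicontinuity on the left-hand side and dominated convergence on the right-hand side, together with $\eta\equiv 1$ on $B_{R_0}(x_0)$ and $\spt\eta\subset B_{2R_0}(x_0)$, then deliver \eqref{stimaloc} in the limit $\e\to 0$.

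The principal obstacle will be the approximation step, namely constructing $u^\e\to u$ on the annulus $B_{2R_0}(x_0)\setminus B_{R_0}(x_0)$ in the weighted $W^{1,p}$-norm when $\vrho$ is allowed to vanish on $\partial\Omega$. This is precisely where the continuity of $\vrho$ on $\overline\Omega$ becomes indispensable: it permits a continuous extension of $\vrho$ across $\partial\Omega$ so that the convolution mollification is well defined, and it makes Lemma \ref{lem:tech} applicable, allowing the extension of $u$ across the artificial boundary $\partial B_{R_*}(x_0)\cap\Omega$ needed to pose the approximating Neumann problem. A secondary point, already handled in the proof of Theorem \ref{main:thmneu}, is checking that the convexity-based signs of the boundary and Hessian integrals in the Bochner identity survive after the introduction of the cutoff $\eta^2$, which is automatic because $\eta^2\ge 0$ and the integrands in question retain their sign pointwise.
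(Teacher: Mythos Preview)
Your treatment of the H\"older bound is correct and matches the paper. Your high-level plan for \eqref{stimaloc}---localize the Reilly identity with a cutoff, discard the nonnegative convexity terms, and pass to the limit---is also the paper's strategy. However, the approximation scheme you sketch has genuine gaps.

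First, you propose to work on $\tilde\Omega=\Omega\cap B_{R_*}(x_0)$ and to mollify $\vrho$ after extending it across $\partial\Omega$. But Proposition~\ref{prfund:neu} requires a $C^2$ boundary, and $\partial\Omega$ is merely Lipschitz; mollifying the weight does nothing to smooth the domain. The paper instead approximates the \emph{boundary} from inside: writing $\partial\Omega$ locally as the graph of a convex function $F$, it replaces $F$ by smooth convex $F_m>F$ and works on the smooth convex sets $\Omega_{m,R}$ of \eqref{OmegaR}, which lie strictly inside $\Omega$. This has the crucial side effect that $\vrho$ is already bounded away from zero on $\overline{\Omega_{m,R}}$, so no extension of $\vrho$ past $\partial\Omega$ is ever needed; the mollification $\rme$ of Proposition~\ref{prop:conv} is performed entirely inside $\Omega$. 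Your single parameter $\e$ conflates what the paper keeps as two separate limits, first $\e\to 0$ (regularization of $\vrho$ and of the operator) and then $m\to\infty$ (domain).

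Second, you do not say what boundary value problem you solve on $\tilde\Omega$; ``a nondegenerate approximating Neumann problem'' is not enough, since a pure Neumann problem carries a compatibility condition you have not arranged. The paper solves the \emph{mixed} problem \eqref{sol:umloc}: homogeneous Neumann on the smoothed piece $\mathrm{Graph}\,F_m$ and Dirichlet data $u_m=u$ on the remaining (artificial) part of $\partial\Omega_{m,R}$. This is what produces the uniform energy bounds of Lemma~\ref{lemma:en2}, and it is here---via Theorem~\ref{thm:zerotrpo} and Remark~\ref{remark:utile}---that Lemma~\ref{lem:tech} actually enters: it controls $\vrho$ under the reflection $\Phi_m$ across $\mathrm{Graph}\,F_m$, yielding the $m$-uniform Poincar\'e inequality for functions vanishing on $\{x_n=K\}$ and the extension estimate from $\Omega_{m,R}$ to $\Omega_R$. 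It is not used to extend $u$ across $\partial B_{R_*}(x_0)\cap\Omega$ as you suggest; $u$ is never extended, it is imposed as Dirichlet data.
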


When $p>2$, we need to make the additional assumption
\begin{equation}\label{a:concc}
    \vrho(x)=\big[g(x)\big]^a\,,\quad\text{for a nonnegative concave function $g$ and a number $a\geq 0$.}
\end{equation}

Indeed, unlike the case $p\leq 2$, we cannot directly estimate the $L^2$-norm of $\A(\nabla u)$ via the $L^p$-norm of $\nabla u$. Instead, we rely on a suitable Poincar\'e inequality on annuli, see Lemma \ref{annuli:lemma} below, which is valid only for weights of the form \eqref{a:concc}, see Remark \ref{remark:annuli}.


\begin{theorem}\label{thm:loc2}
    Let $\Omega\subset \R^n$ be a convex domain, $\vrho$ be a nonnegative function satisfying \eqref{a:concc} which can be continuously extended to $\overline{\Omega}$. Let $p>2$ and $f\in L^2_{loc}(\overline{\Omega};\vrho)$, and suppose that $u\in W^{1,p}_{loc}(\overline{\Omega};\vrho)$ is a weak local solution to \eqref{eq:neuloc}. Then there exists $R>0$ such that, for every $x_0\in \partial \Omega$ and $R_0\leq R$, we have
\begin{equation}
    \A(\nabla u)\in W^{1,2}(\Omega\cap B_{R_0}(x_0);\vrho)\,,
\end{equation}
and there exist positive constants 
$C_0(n,p,\Omega,\vrho)$ and $\widehat{C}_0=\widehat{C}_0(n,L_\Omega)$, with $L_\Omega$ denoting the Lipschitz constant of $\Omega$, such that
 \begin{equation}\label{stimaloc1}
     \int_{\Omega\cap B_{R_0}(x_0)} \big|\nabla\A(\nabla u)\big|^2\,\vrho\,dx  \leq C_0\,\int_{\Omega\cap B_{\widehat{C}_0R_0}(x_0)} f^{2}\,\vrho\,dx+\frac{C_0}{R_0^{n+2+a}}\,\left( \int_{\Omega\cap B_{\widehat{C}_0R_0}(x_0)}|\A(\nabla u)|\,\vrho\,dx \right)^2
     \end{equation}
with
     \begin{equation}
    \left(\int_{\Omega\cap B_{\widehat{C}_0R_0}(x_0)} \big|\A(\nabla u)\big|\,\vrho\,dx\right)^2 \leq \left( \int_{\Omega\cap B_{\widehat{C}_0 R_0}(x_0)}\vrho\,dx\right)^{2/p}\,\left(\int_{\Omega\cap B_{\widehat{C}_0 R_0}(x_0)}|\nabla u|^{p}\,\vrho\,dx \right)^{2/p'}\,.
  \end{equation}
\end{theorem}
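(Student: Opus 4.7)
The plan is to follow the same scheme used in Theorem \ref{main:thmneu} (global) and Theorem \ref{thm:loc1} (local, $p\leq 2$), the crucial new ingredient being the annular Poincar\'e inequality of Lemma \ref{annuli:lemma}, which is available only under the concavity structure \eqref{a:concc}. First I would carry out the standard approximation scheme: replace $\vrho$ by smooth, strictly positive weights $\vrho_\delta$ of the form $g_\delta^a$ with $g_\delta$ smooth concave (the continuity of $\vrho$ up to $\overline{\Omega}$ is exploited here through Lemma \ref{lem:tech}), mollify $f$, and approximate $\Omega$ by smooth convex domains. The nondegenerate theory then produces classical solutions $u_\delta$ to the regularized Neumann problems, for which all the identities below are rigorous; one passes to the limit $\delta\to 0$ at the end.

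Fix $x_0\in\partial \Omega$ and $R_0\leq R$, and pick a cutoff $\eta\in C^\infty_c(\R^n)$ with $\eta\equiv 1$ on $B_{R_0}(x_0)$, $\supp \eta\subset B_{\widehat{C}_0R_0}(x_0)$ and $|\nabla \eta|\leq C/R_0$. The constant $\widehat{C}_0$ only needs to be large enough so that the annulus where $\eta$ varies is covered by a single Lipschitz chart of $\partial\Omega$, whence the dependence $\widehat{C}_0=\widehat{C}_0(n,L_\Omega)$. Inserting $\eta^2$ into the fundamental second-order identity that underlies Theorem \ref{main:thmneu}, and discarding the two favorable terms -- the integral involving $\nabla^2 h_\delta$ (nonnegative by convexity of $h_\delta$) and the boundary integral involving the second fundamental form of $\partial\Omega$ (nonnegative by convexity of $\Omega$) -- produces a Caccioppoli-type inequality of the form
\begin{equation*}
\int_{\Omega\cap B_{R_0}(x_0)}|\nabla\A(\nabla u_\delta)|^2\,\vrho_\delta\,dx \leq C\int_{\Omega\cap B_{\widehat{C}_0R_0}(x_0)} f^2\,\vrho_\delta\,dx+\frac{C}{R_0^2}\int_{\Omega\cap B_{\widehat{C}_0R_0}(x_0)}|\A(\nabla u_\delta)-c|^2\,\vrho_\delta\,dx,
\end{equation*}
where $c\in\R^n$ is a free constant, since the divergence-form equation satisfied in a weak sense by $\A(\nabla u_\delta)$ is invariant under constant shifts of $\A$. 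For $p\leq 2$ one would stop here and bound the last integral by $|\nabla u|^p$ via H\"older, as in Theorem \ref{thm:loc1}; for $p>2$ this fails and one has to invoke the annular Poincar\'e.

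Choosing $c$ as the $\vrho_\delta$-weighted average of $\A(\nabla u_\delta)$ on a suitable annulus, Lemma \ref{annuli:lemma} applied along a finite chain of nested annular subregions between $B_{R_0}$ and $B_{\widehat{C}_0R_0}$, combined with a standard hole-filling iteration that reabsorbs into the left-hand side the resulting $\int|\nabla\A|^2\,\vrho_\delta$ errors on the annuli, converts the last term into
\begin{equation*}
\frac{C}{\int_{\Omega\cap B_{\widehat{C}_0R_0}(x_0)}\vrho_\delta\,dx}\left(\int_{\Omega\cap B_{\widehat{C}_0R_0}(x_0)}|\A(\nabla u_\delta)|\,\vrho_\delta\,dx\right)^2,
\end{equation*}
after exploiting the comparability of annular and ball weighted volumes for $\widehat{C}_0$ chosen large enough. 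The concavity of $g_\delta$ then yields a lower bound $\int_{\Omega\cap B_{R_0}(x_0)}\vrho_\delta\geq c\,R_0^{n+a}$, uniformly in $\delta$ and provided $R_0\leq R$ for a universal threshold $R$ (this is what pins down $R$ in the statement), and plugging it in produces the $R_0^{-(n+2+a)}$ scaling in \eqref{stimaloc1}. Passing to the limit $\delta\to 0$ finishes the main estimate; the second displayed inequality is just H\"older's inequality applied to $|\A(\nabla u)|=|\nabla u|^{p-1}$ with conjugate exponents $p$ and $p'$. The main obstacle I foresee is running the hole-filling iteration while keeping the annular Poincar\'e constant and the weighted volume lower bound uniform with respect to the approximation parameter $\delta$; both rest genuinely on the concavity of $g$ and are the reason the hypothesis must be strengthened from mere log-concavity to \eqref{a:concc}.
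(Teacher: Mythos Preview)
Your overall strategy matches the paper's: approximation scheme, Reilly-based Caccioppoli estimate, annular Poincar\'e (Lemma \ref{annuli:lemma}), hole-filling, and passage to the limit. You have correctly identified that Lemma \ref{annuli:lemma} is the crucial new ingredient and that hypothesis \eqref{a:concc} enters precisely through the lower volume bound needed there.

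There is, however, one genuine inaccuracy. Your claim that the Caccioppoli inequality carries a \emph{free constant} $c\in\R^n$ because ``the divergence-form equation satisfied by $\A(\nabla u_\delta)$ is invariant under constant shifts of $\A$'' is not correct, and the paper does not proceed this way. The Reilly identity \eqref{id:fundneu00} is applied to $V=\A_\e(\nabla u_{m,\e})$; replacing $V$ by $V-c$ would change $\mathrm{div}(\vrho V)$ by $-c\cdot\nabla\vrho$, so the equation is \emph{not} shift-invariant and the cross terms do not produce $|\A-c|^2$. Instead, the paper obtains, for $\sigma<\tau$, the Caccioppoli inequality
\[
\int_{\Omega\cap \mathcal{R}^F_\sigma}\rme|\nabla\A_\e(\nabla\ume)|^2\,dx\leq C\int_{\Omega\cap \mathcal{R}^F_\tau}\rme f^2\,dx+\frac{C}{(\tau-\sigma)^2}\int_{\Omega\cap(\mathcal{R}^F_\tau\setminus\mathcal{R}^F_\sigma)}\rme|\A_\e(\nabla\ume)|^2\,dx,
\]
with \emph{no} constant subtracted, and then applies Lemma \ref{annuli:lemma} directly to $v=\A_\e(\nabla\ume)$ with $\delta\sim(\tau-\sigma)$. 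The lemma itself already delivers the term $\delta^{-(n+a)}\big(\int|\A_\e|\,\rme\big)^2$; the lower bound $\int\rme\geq c\,\delta^{n+a}$ is built into its proof (see \eqref{stima:basso}) and is not applied as a separate step afterwards. The hole-filling is then the standard one (add $C_1$ times the $\sigma$-integral to both sides, apply Giusti's iteration lemma over $R_0\leq\sigma<\tau\leq 2R_0$), yielding \eqref{stimaloc1}. Note also that the paper works on the rectangles $\mathcal{R}^F_\tau$ adapted to the boundary graph rather than on balls, because Lemma \ref{annuli:lemma} is formulated on those rectangles; this is the origin of the geometric constant $\widehat{C}_0(n,L_\Omega)$.
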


\medskip

The proof of Theorems \ref{main:thmneu}, \ref{thm:loc1} and \ref{thm:loc2} exploits a suitable Reilly's identity. The argument must be implemented via a careful approximation scheme, which involves the approximation of the domain, of the operator and of the weight. 

The choice of log-concave weights is crucial both for exploiting Reilly's identity and for the validity of a Poincar\'e inequality (see \cite{fe:ni,DNP}), which is a fundamental ingredient of our approach, for instance to prove energy estimates. Poincar\'e inequality is also used to prove a compactness theorem for weighted Sobolev spaces, see Theorem \ref{thm:cpt} below, which is of independent interest.  

Our approach is quite flexible and can be used to deal with more general problems, such as anisotropic operators as the ones considered in \cite{accfm} and \cite{miao}. This would add more technical issues, making the presentation of the manuscript far too involved. For this reason, we preferred to confine our investigation to the classical $p$-Laplace operator.

\medskip

We also mention that the study of Dirichlet problems for \eqref{eq_main1} is much more delicate and a general theory seems false. As an example, one can think to the function $u(x',x_n)=x_n^{1-a}$ in $\Omega = \{x_n>0\} \cap B_1$ with weight $\vrho(x',x_n)= x_n^a$, for $a \in (0,1)$. One has that $\diver (\vrho \nabla u) = 0$ but $\nabla^2u \not\in L^2(\Omega;\vrho)$, see also Section \ref{sec:dir} for further considerations.

\medskip

The paper is organized as follows. In Section \ref{sec:prelim} we introduce some notation, provide some example and discussion on log-concave functions, and we give some preliminary results related to the approximation arguments that we will use.  In Section \ref{sec:poincare} we will give a useful Poincar\'e inequality and prove some compactness theorems.  Section \ref{sec:fundlem} contains some fundamental lemmas for vector fields which will be used to obtain our main results. In Section \ref{sec:proof} we prove Theorem \ref{main:thmneu}, while Theorems \ref{thm:loc1} and \ref{thm:loc2} are proved in Section \ref{section_thm23}. In Section \ref{sec:dir} we give some remarks on the Dirichlet problem.

\section{Preliminaries}\label{sec:prelim}

In this section we give some properties and examples of log-concave functions and set up an approximations scheme. 

\subsection{Log-concave functions} 
We consider a positive weight $\vrho:\Omega \to(0,\infty)$ which is log-concave, i.e. its logarithm is concave, or equivalently
\begin{equation}
    \vrho\big(\l x+(1-\l)y\big)\geq [\vrho(x)]^\l[\vrho(y)]^{1-\l}\,,
\end{equation}
for all $x,y\in \Omega$ and $\l\in [0,1]$.
Therefore, we may write it in the form
\begin{equation}
    \vrho(x)=e^{-h(x)}\,,
\end{equation}
for some convex function $h:\Omega\to \R$. Thanks to the properties of convex functions, it follows that $\vrho$ is locally Lipschitz continuous in $\Omega$ and it may vanish only at the boundary $\partial \Omega$. Moreover, if $\Omega$ is bounded, then $\vrho$ attains its maximum in the interior of $\Omega$.

In Theorems \ref{thm:loc1}-\ref{thm:loc2}, we shall consider the continuous extension of $\vrho$ at the boundary of $\Omega$, which we still denote by $\vrho$. Since $\vrho$ may vanish at the boundary, in this case we shall consider the corresponding extension of $h$, still denoted by $h$, as a function $h:\overline \Omega\to \R \cup \{+\infty\}$ which is still convex. Hence, the zero level set of $\vrho$, possibly contained in $\partial \Omega$, corresponds to the points where $h=+\infty$. 

\begin{example} \label{ex1}
Given $\Omega\subset \R^n$ a bounded convex domain, let $d:\Omega\to \R$ be the distance function from $\partial \Omega$
\begin{equation}
    d(x)=\mathrm{dist}(x,\partial \Omega)=\inf_{y\in \partial \Omega} |y-x| \,.
\end{equation}
It is well-known that $d$ is concave \cite[Theorem 10.1]{del:zo} and thus it is also log-concave. 
Since any positive power of a positive concave functions is log-concave, then $d^a$ is log-concave for any $a>0$.
\end{example}

\begin{example} \label{ex2}
Let $\psi: \R^{n-1} \to [0,\infty)$ be a convex function and let $\Omega = \{ x=(x',x_n) \in \R^n :\ x_n > \psi(x') \}$. Then $\vrho(x)=x_n^a$ is log-concave for any $a> 0$. 
\end{example}

\subsection{Approximation of log-concave weights}\label{sec:apprweight}

We will need to approximate the weight $\vrho$ via a sequence of smooth, log-concave functions.
To this end, we use the standard convolution kernel
\begin{equation*}
    \eta(x)=\begin{cases}
        c_n\,e^{-\frac{1}{1-|x|^2}}\quad & x\in B_1
        \\
        0\quad & x\not\in B_1\,,
    \end{cases}
\end{equation*}
where the constant $c_n$ is such that $\int_{\R^n}\eta\,dx=1$. Then, for $\e>0$, we set
\begin{equation*}
    \eta_\e(x)=\frac{1}{\e^n}\,\eta(x/\e^n)\,.
\end{equation*}
Observe that $\eta_\e$ is a log-concave function on the ball $B_\e$, and it is supported on its closure.

By exploiting the properties of convolution, we have the following approximation result.
\begin{proposition}\label{prop:conv}
Let $\vrho$ be a positive log-concave function in a bounded convex domain $\Omega$. Let $\{\Omega_m\}_{m\in \N}$ be a sequence of convex domains in $\R^n$ such that
    \begin{equation}\label{sub:omm}
        \Omega_m\subset\subset\Omega_{m+1}\subset\subset \Omega\,.
    \end{equation}
    
Then, for any fixed $m\in \N$, there exists a sequence $\{\rme\}_{\e>0}$ of log-concave functions in $\Omega_m$ such that
        \begin{equation}\label{vrho:m}
        \rme \in C^\infty(\overline{\Omega}_m)\,,\quad \inf_{\Omega_m}\rme\geq c_m>0\quad\text{ for all $\e>0$,}
    \end{equation}
    and
\begin{equation}\label{conv:vrhom}
        \rme \xrightarrow{\e\to 0}\vrho\quad\text{uniformly in $\overline{\Omega}_m$.}
    \end{equation}
\end{proposition}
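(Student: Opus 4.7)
The plan is to define $\rme$ as the mollification $\rme := \vrho * \eta_\e$, where $\vrho$ is first extended by zero outside $\Omega$. The nontrivial issue will be the preservation of log-concavity under convolution.

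I would first record some preliminary observations. Writing $\vrho = e^{-h}$ with $h$ convex on $\Omega$, convexity forces $h$ to be locally Lipschitz in the interior, so $\vrho$ is continuous and strictly positive on $\Omega$. Since $\overline{\Omega}_{m+1}$ is a compact subset of $\Omega$, there exist constants $0 < c_m \leq C_m < \infty$ with $c_m \leq \vrho \leq C_m$ on $\overline{\Omega}_{m+1}$, and we set $\delta_m := \dist(\overline{\Omega}_m, \partial \Omega_{m+1}) > 0$. The zero-extension of $\vrho$ to $\R^n$ is log-concave: the inequality $\vrho(\l x + (1-\l)y) \geq \vrho(x)^\l \vrho(y)^{1-\l}$ is trivial whenever $\vrho(x)\vrho(y) = 0$, while for $x, y \in \Omega$ convexity of $\Omega$ places $\l x + (1-\l) y$ inside $\Omega$ and the inequality reduces to the log-concavity on $\Omega$.

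For $\e < \delta_m$ and $x \in \overline{\Omega}_m$, the convolution $\rme(x) = \int_{B_\e(x)} \vrho(y) \eta_\e(x-y)\, dy$ samples $\vrho$ only in $\overline{\Omega}_{m+1}$. Standard mollification theory then supplies smoothness $\rme \in C^\infty(\overline{\Omega}_m)$, the pointwise two-sided bounds $c_m \leq \rme \leq C_m$ (hence the uniform positive lower bound), and, from the uniform continuity of $\vrho$ on $\overline{\Omega}_{m+1}$, the uniform convergence $\rme \to \vrho$ on $\overline{\Omega}_m$.

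The main step is log-concavity of $\rme$, which I would deduce from the Pr\'ekopa--Leindler inequality, in the form: the convolution of two log-concave functions on $\R^n$ is log-concave. The extended $\vrho$ is already log-concave by the argument above; it remains to check that $\eta_\e$ is log-concave. On $B_1$ one has $\log \eta(x) = \log c_n - (1-|x|^2)^{-1}$, and a direct computation gives that the Hessian of $(1-|x|^2)^{-1}$ equals $2(1-|x|^2)^{-2} I + 8(1-|x|^2)^{-3} xx^T$, which is positive semidefinite; thus $-\log \eta$ is convex on $B_1$, and extending $\eta$ by zero and rescaling yields that $\eta_\e$ is log-concave on $\R^n$ by the same trivial-extension argument used for $\vrho$. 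Pr\'ekopa--Leindler then delivers log-concavity of $\rme$ on $\R^n$, and a fortiori on $\Omega_m$.

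The real obstacle is the log-concavity preservation under convolution: this is the content of the Pr\'ekopa--Leindler inequality and I would cite it rather than reprove it. A minor bookkeeping point is that the statement formally reads ``for all $\e > 0$'', whereas the uniform bounds and uniform convergence are only genuinely available in the regime $\e < \delta_m$; one may either implicitly restrict to small $\e$ (which is all the approximation scheme needs) or, for $\e \geq \delta_m$, simply set $\rme := \vrho * \eta_{\delta_m/2}$.
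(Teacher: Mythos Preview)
Your proof is correct and follows essentially the same approach as the paper: define $\rme$ by mollification and invoke Pr\'ekopa's theorem (equivalently, Pr\'ekopa--Leindler) for the preservation of log-concavity, with smoothness, the lower bound, and uniform convergence coming from standard convolution properties once $\e$ is small enough that $B_\e(x)\subset\Omega_{m+1}$ for $x\in\overline{\Omega}_m$. The only cosmetic difference is in the extension step: you extend $\vrho$ by zero outside $\Omega$, whereas the paper first extends $h|_{\Omega_{m+2}}$ to a convex Lipschitz function $H$ on $\R^n$ and sets $\tilde\vrho=e^{-H}$; since for small $\e$ the convolution on $\overline{\Omega}_m$ only samples values in $\Omega_{m+1}$, both extensions produce the same $\rme$ there, and both are log-concave on $\R^n$, so the two arguments are interchangeable.
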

Proposition \ref{prop:conv} is a consequence of the convolution properties and of the following theorem.
\begin{theorem}\label{thm:prek}
    Let $A\subset \R^n$ be a convex open set, and let  $g:\R^n\times A\to [0,\infty)$ be a log-concave function. Then the function
    \begin{equation*}
        x\mapsto \int_{A} g(x,y)\,dy
    \end{equation*}
    is log-concave in $\R^n$.
\end{theorem}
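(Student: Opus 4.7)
This is Prékopa's classical theorem on log-concavity of marginals, and the most natural route is via the Prékopa–Leindler inequality. First I would reduce to the unconstrained case by extending $g$ to the whole space. Define
\[
\tilde g(x,y)=\begin{cases} g(x,y) & \text{if } y\in A,\\ 0 & \text{otherwise,}\end{cases}
\]
so that $\int_A g(x,y)\,dy=\int_{\R^n}\tilde g(x,y)\,dy$. The function $\tilde g$ is log-concave on the convex set $\R^n\times\R^n$: if $(x_1,y_1),(x_2,y_2)\in\R^n\times\R^n$ have both $y_i\in A$, then by convexity of $A$ the convex combination $\lambda y_1+(1-\lambda)y_2$ lies in $A$, so the desired inequality follows from the log-concavity of $g$ on $\R^n\times A$; if instead $y_i\notin A$ for some $i$, the right-hand side of the log-concavity inequality vanishes and there is nothing to check.

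Next, fix $x_1,x_2\in\R^n$ and $\lambda\in(0,1)$, and introduce the three one-variable functions
\[
F(y):=\tilde g(x_1,y),\qquad G(y):=\tilde g(x_2,y),\qquad H(y):=\tilde g\bigl(\lambda x_1+(1-\lambda)x_2,y\bigr).
\]
For any $y_1,y_2\in\R^n$ the log-concavity of $\tilde g$ gives
\[
H\bigl(\lambda y_1+(1-\lambda)y_2\bigr)=\tilde g\bigl(\lambda(x_1,y_1)+(1-\lambda)(x_2,y_2)\bigr)\geq F(y_1)^{\lambda}\,G(y_2)^{1-\lambda},
\]
which is exactly the hypothesis of the Prékopa–Leindler inequality. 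Invoking that inequality yields
\[
\int_{\R^n}H(y)\,dy\;\geq\;\Bigl(\int_{\R^n}F(y)\,dy\Bigr)^{\lambda}\Bigl(\int_{\R^n}G(y)\,dy\Bigr)^{1-\lambda},
\]
which is precisely the log-concavity of $x\mapsto\int_A g(x,y)\,dy$ along the segment joining $x_1$ and $x_2$. Arbitrariness of $x_1,x_2,\lambda$ concludes the argument.

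The proof is a short application of Prékopa–Leindler, so the real content is contained in that inequality (proved classically via the one-dimensional Brunn–Minkowski inequality and induction on dimension). The only genuine issue at our level is the reduction to $\R^n\times\R^n$: it is essential that $A$ be convex, for otherwise the zero-extension would destroy log-concavity. I would also remark that no integrability assumption on $g$ is needed, as the statement is trivial when either marginal is infinite, and measurability of the marginal follows from Fubini together with the (local) continuity of $g$ on the interior of its support, which is automatic for log-concave functions.
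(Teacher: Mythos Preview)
Your argument via the Pr\'ekopa--Leindler inequality is correct and is the standard modern proof of this result. Note, however, that the paper does not actually supply its own proof of this theorem: it merely cites Pr\'ekopa's original papers \cite{prekopa,prekopa1} and observes that the proof stated there for functions on $\R^n\times\R^n$ extends to the case where $y$ ranges over a convex domain $A$. Your zero-extension step is precisely the mechanism behind that observation, so in that sense you have fleshed out what the paper only alludes to.
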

Theorem \ref{thm:prek} can be found in \cite[Theorem 6]{prekopa}, \cite[Proof of Theorem 2]{prekopa1}. The theorem is stated for log-concave functions $g(x,y)$ defined on $(x,y)\in \R^n\times \R^n$, but its proof also works for $y$ merely defined on convex domains $A$.

\begin{proof}[Proof of Proposition \ref{prop:conv}]
Let $\vrho$ be a positive log-concave function of the form $\vrho=e^{-h}$, for some convex function $h$ on $\Omega$. 
In particular, the function $h$ is convex and Lipschitz continuous in $\Omega_{m}$ for all $m\in \N$. Hence, for $m \in \N$ fixed, we consider the sets $\Omega_m\Subset\Omega_{m+1}\Subset\Omega_{m+2}\Subset \Omega$ and we consider the convex, Lipschitz extension of $h|_{\Omega_{m+2}}$ to $\R^n$, which we denote by $H$. We set $\tilde{\vrho}=e^{-H}$, so that $\tilde{\vrho}$ is positive and log-concave in $\R^n$, and it coincides with $\vrho$ in $\Omega_{m+2}$.
Now, for any $\e>0$ such that
\begin{equation*}
   0< \e<\frac{1}{2}\min\big\{\mathrm{dist}(\Omega_{m},\Omega_{m+1});\,\mathrm{dist}(\Omega_{m+1},\Omega_{m+2})\big\}\,,
\end{equation*}
we define the convolution function  
    \begin{equation}\label{convolution}
    \rme(x)=\int_{B_\e} \tilde{\vrho}(x-y)\,\eta_\e(y)\,dy=\int_{B_\e} \vrho(x-y)\,\eta_\e(y)\,dy\,,\quad x\in \Omega_{m+1}\,,
\end{equation}
where the second equality follows from the fact that $\Omega_{m+1}+B_\e\Subset \Omega_{m+2}$ by our choice of $\e$, and $\tilde{\vrho}=\vrho$ in $\Omega_{m+2}$. Here, $\Omega_{m+1}+B_\e$ denotes the Minkowsky sum between the sets $\Omega_{m+1}$ and $B_\e$, which is still a  convex set.

By the standard properties of convolution $\rme\in C^\infty(\Omega_{m+1})$,
\begin{equation*}
    \inf_{\Omega_m}\rme\geq \inf_{\Omega_m+B_\e}\vrho\geq \inf_{\Omega_{m+1}}\vrho\eqqcolon c_m>0\,,
\end{equation*}
and $\rme \xrightarrow{\e\to 0} \vrho$ uniformly in $\overline{\Omega}_m$.

Furthermore, by applying Theorem \ref{thm:prek} with $g(x,y)=\vrho(x-y)\,\eta_\e(y)$, $y\in A=B_\e$, it follows that $\rme$ is log concave in $\Omega_{m+1}$, thus completing the proof.
\end{proof}

\subsection{Stress field approximation} We will also need to approximate the vector field \begin{equation}\label{def:A}
    \A(\xi)=|\xi|^{p-2}\xi
\end{equation}
with a sequence of regularized vector field. Let $\e>0$, and define
\begin{equation}\label{def:Ae}
    \A_\e(\xi)=\left[\e^2+|\xi|^2 \right]^{\frac{p-2}{2}}\xi\,.
\end{equation}
 We have $\A_\e\in C^\infty(\R^n)$, and 
\begin{equation}\label{conv:Ae}
    \A_\e(\xi)\xrightarrow{\e\to 0} \A(\xi)\quad\text{uniformly in $\{|\xi|\leq M\}$}\,,
\end{equation}
for all $M>0$.
Furthermore, there holds
\begin{equation}\label{eigen1:1}
    \min\{p-1,1\}\,[\e^2+|\xi|^2]^{\frac{p-2}{2}}\,\mathrm{Id}\leq \nabla_\xi \A_\e(\xi)\leq  \max\{p-1,1\}\,[\e^2+|\xi|^2]^{\frac{p-2}{2}}\,\mathrm{Id}\,,
\end{equation}
for $\xi\in \R^n$, where $\mathrm{Id}$ is the $n\times n$ identity matrix. Hence, by denoting $\l^\e_{\min},\,\l^\e_{\max}$ the smallest and largest eigenvalue of the symmetric matrix $\nabla_\xi\A_\e(\xi)$, there holds
\begin{equation}\label{eigen:2}
    \frac{\l^\e_{\min}}{\l^\e_{\max}}\geq \frac{\min\{p-1,1\}}{\max\{p-1,1\}}\,.
\end{equation}

We close this section by recalling an elementary algebraic inequality which can be obtained, for instance, via \cite[Lemma 3.1]{guarnotta}.
Let $X\in \R^{n\times n}$ be a matrix of the form $X=P\,S$, with $P,S$ symmetric, $P$ positive definite, and  let $r=\l_{\min}(P)/\l_{\max}(P)$ be the ratio between the smallest and largest eigenvalue of $P$. Then
\begin{equation}\label{elem:ineq}
    \mathrm{tr}(X^2)\geq 2\,\frac{r}{1+r^2}\,|X|^2\,,\quad X=P\,S \,.
\end{equation}

\section{Poincar\'e inequality and compactness theorems}  \label{sec:poincare}
Let $\Omega$ be a bounded convex domain. For log-concave weights, there holds a weighted Poincar\'e inequality \cite{fe:ni,DNP}
\begin{equation}\label{poincare}
    \bigg(\int_\Omega |u|^p\,\vrho\,dx\bigg)^{1/p}\leq C(p)\,d_\Omega\, \bigg(\int_\Omega |\nabla u|^p\,\vrho\,dx\bigg)^{1/p}\,,
\end{equation}
for all functions $u\in W^{1,p}(\Omega;\vrho)$ such that $\int_\Omega |u|^{p-2}u\,\vrho\,dx=0$.
This last term is nonlinear in the $u$-variable. However, via standard modifications, we may rewrite Poincar\'e inequality  in the usual and more convenient form.
\begin{lemma}\label{lemm:poincare}
Let $\Omega$ be a bounded convex domain.  For each $u\in W^{1,p}(\Omega;\vrho)$, there exists a constant $c_u$ such that
    \begin{equation}\label{zeromeanu}
        \int_\Omega |u-c_u|^{p-2}(u-c_u)\,\vrho\,dx=0\,.
    \end{equation}
    Furthermore, if we denote by 
    $$(u)_{\Omega;\vrho}=\left(\int_\Omega\vrho\,dx\right)^{-1}\int_\Omega u\,\vrho\,dx$$
    the mean average of $u$ w.r.t. the measure $\vrho\,dx$, we have the following Poincar\'e inequality
    \begin{equation}\label{ppoincare}
        \int_\Omega |u-(u)_{\Omega;\vrho}|^p\,\vrho\,dx\leq C(p)\,d_\Omega^p\,\int_\Omega |\nabla u|^{p}\,\vrho\,dx\,.
    \end{equation}
\end{lemma}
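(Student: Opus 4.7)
The plan is to derive both statements from the nonlinear Poincar\'e inequality \eqref{poincare} by a soft calculus/triangle-inequality argument; no further analytic tools beyond monotonicity of a scalar function and Jensen's inequality seem necessary.

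\textbf{Step 1: existence of $c_u$.} For fixed $u\in W^{1,p}(\Omega;\vrho)$, define
\begin{equation*}
    F(c)=\int_\Omega |u-c|^{p-2}(u-c)\,\vrho\,dx,\qquad c\in \R.
\end{equation*}
First I would check that $F$ is well defined and continuous: $|u-c|^{p-1}\vrho\in L^1(\Omega)$ because $u\in L^p(\Omega;\vrho)$, $\vrho\in L^1(\Omega)$ (as $\Omega$ is bounded and $\vrho$ is locally bounded), so $|u-c|^{p-1}\vrho\in L^1$ by H\"older. Continuity in $c$ follows from dominated convergence using the local uniform bound $|u-c|^{p-1}\le C_R(1+|u|^{p-1})$ on $c\in [-R,R]$. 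Next I would show strict monotonicity: since $t\mapsto |t|^{p-2}t$ is strictly increasing on $\R$, for $c_1<c_2$ we have $|u-c_1|^{p-2}(u-c_1)>|u-c_2|^{p-2}(u-c_2)$ pointwise, hence $F(c_1)>F(c_2)$ (assuming $u$ is not a.e.\ constant; if it is, take $c_u=u$). Finally, I would check the limits $F(c)\to \mp\infty$ as $c\to\pm\infty$: e.g.\ for $c\to+\infty$, $|u-c|^{p-2}(u-c)\le -\tfrac12 c^{p-1}$ on any level set $\{u\le c/2\}$ which exhausts $\Omega$, yielding $F(c)\to-\infty$ (and symmetrically for $c\to-\infty$). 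Hence by the intermediate value theorem there is a unique $c_u$ with $F(c_u)=0$, which is \eqref{zeromeanu}.

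\textbf{Step 2: the standard Poincar\'e inequality.} Apply \eqref{poincare} to $u-c_u$, which lies in $W^{1,p}(\Omega;\vrho)$ and satisfies the nonlinear zero-mean condition required by \eqref{poincare}. Since $\nabla(u-c_u)=\nabla u$, this gives
\begin{equation*}
    \Big(\int_\Omega |u-c_u|^p\,\vrho\,dx\Big)^{1/p}\le C(p)\,d_\Omega\,\Big(\int_\Omega |\nabla u|^p\,\vrho\,dx\Big)^{1/p}.
\end{equation*}
To replace $c_u$ by the linear mean $(u)_{\Omega;\vrho}$, I would use Jensen's inequality with the probability measure $\vrho(\Omega)^{-1}\vrho\,dx$:
\begin{equation*}
    \bigl|(u)_{\Omega;\vrho}-c_u\bigr|^p=\Bigl|\frac{1}{\vrho(\Omega)}\int_\Omega (u-c_u)\,\vrho\,dx\Bigr|^p\le \frac{1}{\vrho(\Omega)}\int_\Omega |u-c_u|^p\,\vrho\,dx,
\end{equation*}
which reads $\|(u)_{\Omega;\vrho}-c_u\|_{L^p(\Omega;\vrho)}\le \|u-c_u\|_{L^p(\Omega;\vrho)}$. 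The triangle inequality then yields
\begin{equation*}
    \|u-(u)_{\Omega;\vrho}\|_{L^p(\Omega;\vrho)}\le 2\,\|u-c_u\|_{L^p(\Omega;\vrho)},
\end{equation*}
and combining with the previous display proves \eqref{ppoincare} with constant $2^p C(p)^p$, which can be absorbed into $C(p)$.

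\textbf{Main obstacle.} The only genuinely nontrivial point is the existence claim in Step~1: strict monotonicity is immediate, but one must carefully verify continuity and the correct sign of the limits $F(\pm\infty)$ via dominated convergence and a splitting over level sets of $u$. Everything else is a routine application of \eqref{poincare} together with Jensen and the triangle inequality.
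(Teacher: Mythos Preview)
Your proposal is correct and follows essentially the same route as the paper: define $F(c)=\int_\Omega |u-c|^{p-2}(u-c)\,\vrho\,dx$, show it is continuous via dominated convergence and that $F(\pm\infty)=\mp\infty$ using the monotonicity of $t\mapsto|t|^{p-2}t$, then apply the intermediate value theorem; for \eqref{ppoincare}, apply \eqref{poincare} to $u-c_u$ and pass from $c_u$ to $(u)_{\Omega;\vrho}$ via Jensen and the triangle (equivalently, the $2^{p-1}$ convexity) inequality. The only cosmetic differences are that the paper uses monotone convergence rather than a level-set splitting for the limits of $F$, and writes the final step with $|a+b|^p\le 2^{p-1}(|a|^p+|b|^p)$ instead of the norm triangle inequality.
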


\begin{proof}
   Let $u\in W^{1,p}(\Omega;\vrho)$, and consider the function
    \begin{equation*}
        \R\ni c\mapsto K(c)=\int_\Omega |u-c|^{p-2}(u-c)\,\vrho\,dx\,.
    \end{equation*}
    By dominated convergence theorem, this is a continuous function; also, the monotonicity of the function $g(t)=|t|^{p-2}t$, $t\in \R$, ensures that
    \begin{align*}
        &|u-c|^{p-2}(c-u)+|u|^{p-2}u\geq 0\,,\quad c\geq 0
        \\
        &|u-c|^{p-2}(u-c)-|u|^{p-2}u\geq 0\,,\quad c<0\,.
    \end{align*}
    Since the function $|u|^{p-2}u\in L^{p'}(\Omega;\vrho)$, it follows by monotone convergence theorem that $$\lim_{c\to + \infty} K(c)=- \infty\quad\text{and}\quad \lim_{c\to -\infty} K(c)=+\infty.$$ 
    Thus, by continuity, there exists $c_u$ satisfying $K(c_u)=0$, i.e. \eqref{zeromeanu}.
Next, by Poincar\'e inequality \eqref{poincare} applied to $u-c_u$, and Jensen inequality,  we get
\begin{align*}
    \int_\Omega |u-(u)_{\Omega;\vrho}|^p\,\vrho\,dx & \leq 2^{p-1} \int_\Omega |u-c_u|^p\,\vrho\,dx+2^{p-1}\int_\Omega |c_u-(u)_{\Omega;\vrho}|^p\,\vrho\,dx
    \\
    &\leq 2^{p-1}\,C(p)\,d_\Omega^p\,\int_\Omega |\nabla u|^p\,\vrho\,dx+2^{p-1}\int_\Omega |u-c_u|^p\,\vrho\,dx
    \\
    &\leq 2^{p}\,C(p)\,d_\Omega^p\,\int_\Omega |\nabla u|^p\,\vrho\,dx\,,
\end{align*}
    which is \eqref{ppoincare}.
\end{proof}

As a consequence of \eqref{ppoincare}, we have that for any function $u\in W^{1,2}(\Omega;\vrho)$, there holds
\begin{equation}\label{poinc:compl}
    \int_{\Omega} |u|^2\,\vrho\,dx\leq C_* \,d_\Omega^2\,\int_{\Omega}|\nabla u|^2\,\vrho\,dx+\left(\int_\Omega\vrho\,dx\right)^{-1}\,\left(\int_\Omega |u|\,\vrho\,dx \right)^2\,,
\end{equation}
where we set $C_*=C(2)$, i.e. the constant appearing in \eqref{poincare} for $p=2$. 

Thanks to the Poincar\'e inequality \eqref{ppoincare} we can prove the following compactness theorem.

\begin{theorem}[Compactness of $W^{1,p}(\Omega;\vrho)$]\label{thm:cpt}
    Let $\Omega\subset \R^n$ be a bounded convex domain, and let $\{u_j\}_{j\in \N}$ be a sequence of functions in $W^{1,p}(\Omega;\vrho)$ such that
    \begin{equation}
        \int_\Omega |u_j|^{p}\,\vrho\,dx+ \int_\Omega |\nabla u_j|^{p}\,\vrho\,dx\leq C\quad \text{for all $j\in \N$,}
    \end{equation}
   for some constant $C>0$. Then there exists a subsequence $\{u_{j_k}\}_{k\in \N}$ and a function $u\in W^{1,p}(\Omega;\vrho)$ such that
    \begin{equation}
        u_{j_k}\xrightarrow{k\to\infty} u\quad\text{in $L^p(\Omega;\vrho)$.}
    \end{equation}
\end{theorem}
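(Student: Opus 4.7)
My approach adapts the classical Rellich--Kondrachov strategy to the log-concave weighted setting. The natural convex exhaustion is $\Omega_m = \{x \in \Omega : \vrho(x) > 1/m\}$: by log-concavity of $\vrho$ each superlevel set $\Omega_m$ is convex, and $\Omega_m \nearrow \Omega$ since $\vrho > 0$ on $\Omega$. On $\Omega_m$ the weight satisfies $1/m \leq \vrho \leq \|\vrho\|_{L^\infty(\Omega)}$ (using that $\vrho$ attains its maximum in the interior of the bounded convex $\Omega$), so the weighted and unweighted Sobolev norms are equivalent there, and $\{u_j\}$ is bounded in the unweighted $W^{1,p}(\Omega_m)$.

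Since $\Omega_m$ is convex (hence Lipschitz) and bounded, the classical Rellich--Kondrachov theorem yields, for each $m$, a subsequence converging in $L^p(\Omega_m) \simeq L^p(\Omega_m;\vrho)$. A Cantor diagonal extraction then produces a subsequence $\{u_{j_k}\}$ converging in $L^p(\Omega_m;\vrho)$ for every $m$, and a.e.\ in $\Omega$, to a limit $u$ which belongs to $L^p(\Omega;\vrho)$ by Fatou's lemma. The decomposition
\begin{equation*}
\int_\Omega|u_{j_k}-u|^p\vrho\,dx \leq 2^{p-1}\!\int_{\Omega_m}\!|u_{j_k}-u|^p\vrho\,dx + 2^{p-1}\!\int_{\Omega\setminus\Omega_m}\!\!\bigl(|u_{j_k}|^p+|u|^p\bigr)\vrho\,dx
\end{equation*}
shows that the interior piece vanishes as $k\to\infty$ for each fixed $m$, and the $u$-contribution to the tail vanishes as $m\to\infty$ by dominated convergence (since $\vrho \in L^1(\Omega)$ gives $\int_{\Omega\setminus\Omega_m}\vrho\,dx \to 0$).

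The main obstacle is the uniform tail estimate
\begin{equation*}
\lim_{m\to\infty}\sup_k\int_{\Omega\setminus\Omega_m}|u_{j_k}|^p\,\vrho\,dx = 0,
\end{equation*}
i.e.\ the equi-integrability of $\{|u_{j_k}|^p\vrho\}$ with respect to Lebesgue measure. This does not follow from $L^p(\Omega;\vrho)$-boundedness alone. The plan here is to upgrade the $W^{1,p}(\Omega;\vrho)$ bound to a higher Lebesgue integrability $L^q(\Omega;\vrho)$ with some $q>p$, by combining the weighted Poincar\'e inequality \eqref{ppoincare} (applied to the mean-shifted sequence $u_j - (u_j)_{\Omega;\vrho}$, whose means are uniformly bounded via H\"older) with a weighted Sobolev-type embedding valid for log-concave weights on bounded convex domains, obtainable either by a Moser-type iteration or from the Brunn--Minkowski/Bakry--\'Emery theory of log-concave measures. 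Once such an $L^q$ bound is available, H\"older's inequality yields
\begin{equation*}
\int_{\Omega\setminus\Omega_m}|u_{j_k}|^p\vrho\,dx \leq \|u_{j_k}\|_{L^q(\Omega;\vrho)}^p \Bigl(\int_{\Omega\setminus\Omega_m}\vrho\,dx\Bigr)^{1-p/q} \longrightarrow 0
\end{equation*}
uniformly in $k$ as $m\to\infty$, thereby closing the argument.
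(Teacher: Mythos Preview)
Your reduction to classical Rellich--Kondrachov on the superlevel sets $\Omega_m=\{\vrho>1/m\}$ and the diagonal extraction are fine, and you have correctly isolated the crux: the uniform tail estimate $\sup_k\int_{\Omega\setminus\Omega_m}|u_{j_k}|^p\vrho\,dx\to 0$. The problem is that your proposed resolution of this tail is not justified. You invoke a weighted Sobolev embedding $W^{1,p}(\Omega;\vrho)\hookrightarrow L^q(\Omega;\vrho)$ for some $q>p$, appealing vaguely to ``Moser iteration or Brunn--Minkowski/Bakry--\'Emery theory'', but no such embedding is established in the paper, and in the generality of an arbitrary log-concave weight on a bounded convex domain it is far from standard. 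Bakry--\'Emery type arguments yield log-Sobolev (and hence hypercontractivity) only under a \emph{uniform} log-concavity assumption $\nabla^2 h\ge c\,\mathrm{Id}$, which is not assumed here; mere log-concavity gives the Poincar\'e inequality \eqref{ppoincare} but nothing stronger in general. A Moser iteration would likewise require a starting Sobolev-type inequality that you have not supplied. As it stands, Step~5 is a plan rather than a proof, and the argument does not close.

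The paper's proof sidesteps this difficulty entirely. Instead of an interior/tail decomposition, it invokes the abstract Riesz precompactness criterion in $L^p(\mu)$: a bounded set $K$ is precompact if $\|f-\mathbb{E}^\pi f\|_{L^p(\mu)}\to 0$ uniformly over $f\in K$ as the mesh of the partition $\pi$ shrinks. The partition is taken to be dyadic cubes intersected with $\Omega$; each piece $Q^i_\delta$ is convex with $\mathrm{diam}\,Q^i_\delta\le\delta$, so the weighted Poincar\'e inequality \eqref{ppoincare} applies \emph{on every piece with the same constant $C(p)\delta^p$}, regardless of how small $\vrho$ is there. Summing gives $\|f-\mathbb{E}^\pi f\|_{L^p(\Omega;\vrho)}^p\le C(p)\delta^p\|\nabla f\|_{L^p(\Omega;\vrho)}^p$, uniformly in $f\in K$. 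This uses only the Poincar\'e inequality already available and never needs higher integrability or control of the degenerate boundary layer.
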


\begin{proof}
In order to prove prove Theorem \ref{thm:cpt}, we are going to use the compactness theorem of Riesz in $L^p(\mu)$ \cite[Theorem 4.7.28, pag. 295]{bogachev}, and show the following, equivalent result.

\medskip 

\noindent {\it \emph{Assertion:}    Let $K\subset W^{1,p}(\Omega;\vrho)$ be a bounded subset of $W^{1,p}(\Omega;\vrho)$. Then $K$ is precompact in $L^p(\Omega;\vrho)$.}

\medskip

Let $\delta>0$ and let $\{\tilde{Q}_\delta\}$ be a partition of $\R^n$ given by dyadic cubes of diameter less or equal to $\delta$, and denote by $\{\tilde{Q}_\delta^i\}_{i=1}^N$ the subset of cubes such that $\tilde{Q}_\delta \cap \Omega \neq \emptyset$. Let  $Q^i_\delta = \tilde{Q}_\delta^i \cap \Omega$ and set $\pi=\{Q^i_\delta\}_{i=1}^N$. Therefore $Q^i_\delta$ are convex, $\mathrm{diam}(Q^i_\delta)\leq \delta$, and they satisfy
\begin{equation}\label{partition}
    \bigcup_{i=1}^N Q^i_\delta=\Omega\quad\text{$\vrho\,dx$-a.e. on $\Omega$, and}\quad Q^i_\delta\cap Q^j_\delta=\emptyset\quad i\neq j\,,
\end{equation}
    that is $\pi$ is a partition of $\Omega$ with respect to the measure $d\mu=\vrho\,dx$.
For $f\in W^{1,p}(\Omega;\vrho)$ and $\pi$ as above, we define  the operator $\mathbb{E}^\pi f$ as
\begin{equation*}
    \mathbb{E}^\pi f(x)=\sum_{i=1}^N(f)_{Q^i_\delta;\vrho}\,\chi_{Q^i_\d}(x)\quad x\in \Omega\,.
\end{equation*}
 Now set $C_K=\sup_{f\in K}\|f\|^p_{W^{1,p}(\Omega;\vrho)}$, and let $f$ be an arbitrary function belonging to $K$. 
 
 By \eqref{partition}, we have $f(x)=\sum_{i=1}^N f(x)\,\chi_{Q_\delta^i}(x)$ for $\vrho\,dx$-a.e. $x\in \Omega$, so that by using Poincar\'e inequality \eqref{ppoincare} and \eqref{partition}, we infer
 \begin{align*}
         \int_\Omega \left|f-\mathbb{E}^\pi f \right|^p\,\vrho\,dx & =\int_\Omega \sum_{i=1}^N\left| f(x)-(f)_{Q^i_\delta;\vrho} \right|^p\,\chi_{Q^i_\d}(x)\,\vrho\,dx
         \\
         &=\sum_{i=1}^N\int_{Q^i_\d}|f-(f)_{Q^i_\delta;\vrho}|^p\,\vrho\,dx
         \\
         &\leq C(p)\,\sum_{i=1}^N\mathrm{diam}(Q^i_\d)^p\int_{Q^i_\d}|\nabla f|^p\,\vrho\,dx
         \\
         &\leq C(p)\,\delta^p\,\sum_{i=1}^N\int_{Q^i_\d}|\nabla f|^p\,\vrho\,dx
         \\
         &=C(p)\,\delta^p\,\int_\Omega|\nabla f|^p\,\vrho\,dx\leq C(p)\,\delta^p\,C_K\,.
 \end{align*}
 Therefore, by letting $\d\to 0$ and by using \cite[Theorem 4.7.28, pag. 295]{bogachev}, we deduce that $K$ is precompact in $L^p(\Omega;\vrho)$, that is our thesis.
\end{proof}

Thanks to the compactness Theorem \ref{thm:cpt}, in a standard way, we can prove Poincar\'e-type inequalities for $W^{1,p}(\Omega;\vrho)$ for functions vanishing on an hypersurface $\Gamma_0\subset\{\vrho>0\}$ with non-zero $\H^{n-1}$-measure. This Poincar\'e inequality will be particularly useful in Section \ref{section_thm23} when we will deal with local estimates at the boundary for unbounded domains. In particular it will be used in an approximation argument to show convergence to the desired solution by considering a sequence of convex sets $\Omega_m\Subset \Omega$ converging in the Hausdorff sense to $\Omega$, and showing that the quantitative constants are independent on $m$.

For our purposes (and for the sake of simplicity), it will be enough to consider $\Omega$ and $\Omega_m$ to be a cylinder intersected with the epigraph of convex functions $F,F_m$, namely
\begin{equation}\label{supergraph}
    \Omega=\{x=(x',x_n):|x'|<R\,,\quad F(x')<x_n<K\}\,,
\end{equation}
 and
\begin{equation}\label{supergraph:m}
    \Omega_m=\{x=(x',x_n):|x'|<R\,,\quad F_m(x')<x_n<K\}
\end{equation}
for some constants $R,K>0$, with 
\begin{equation}\label{f:fm}
    F(x')<F_m(x')\quad |x'|\leq 2R\,,\quad F_m\xrightarrow{m\to\infty} F\quad\text{uniformly for $|x'|\leq 2R$,}
\end{equation}
and the Lipschitz constants $L_{F_m}$ of $F_m$ satisfy
\begin{equation}\label{Lip:fm}
    L_{F_m}\leq 2\,L_F\,.
\end{equation}
We notice that \eqref{Lip:fm} holds thanks to the properties of convex functions coupled with \eqref{f:fm}; actually, in our case, \eqref{Lip:fm} will follow from the fact that $F_m$ will be the  convolution of $F$, up to an additive constant. We let \begin{equation}\label{def:E}E=\{x=(x',x_n),\:|x'|< 2R,\quad F(x')<x_n<2K\}\,,
\end{equation}
and assume that
\begin{equation}\label{vrho:graph}
    \vrho\text{ is positive and log-concave in $E$, and $\vrho\in C^0(\overline{E})$.}
\end{equation}
 By considering the continuous extension of $\vrho$ to the boundary of $E$, we notice that $\vrho$ may only vanish on $\partial E$. In particular, when one considers the sets $\Omega$ and $\Omega_m$ defined in \eqref{supergraph} and \eqref{supergraph:m}, respectively, one obtains that $\vrho$ may vanish only at points $(x',F(x'))$ on the graph of $F$, i.e. in a portion of $\partial \Omega$, and $\vrho >0 $ on $\overline \Omega_m$ for any $m \in \N$.

\begin{theorem}\label{thm:zerotrpo}
    Let $\Omega,\Omega_m$ be satisfy \eqref{supergraph}-\eqref{Lip:fm} and let $\vrho$ satisfy \eqref{vrho:graph}. Then there exists a constant $C=C(n,p,\Omega,\vrho)$ such that
    \begin{equation}\label{poinc:zero}
        \int_{\Omega_m}|w|^p\,\vrho\,dx\leq C\int_{\Omega_m}|\nabla w|^p\,\vrho\,dx
    \end{equation}
    for all functions $w\in W^{1,p}(\Omega_m;\vrho)$ satisfying
    \begin{equation}\label{zero:trace}
        w(x',K)=0\quad\text{for all $|x'|<R$.}
    \end{equation}
\end{theorem}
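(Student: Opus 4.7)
My plan is to combine the weighted Poincar\'e inequality of Lemma \ref{lemm:poincare} with a uniform weighted trace estimate on the top face $\Gamma := \{|x'| < R\} \times \{K\}$, thereby avoiding any transport of functions between the varying domains $\Omega_m$. Since $F < K$ on $\{|x'| \leq R\}$ and $F_m \to F$ uniformly by \eqref{f:fm}, I fix $\delta_0 > 0$ and $m_0 \in \N$ so that the strip $S := \{|x'| < R\} \times (K - \delta_0, K)$ satisfies $\overline S \subset E$ and $S \subset \Omega_m$ for every $m \geq m_0$. On the compact set $\overline S \subset E$ the weight $\vrho$ is continuous and strictly positive, hence bounded between positive constants $c_0, C_0$ depending only on $\vrho$, $\Omega$, $\delta_0$; similarly $|\Gamma|_\vrho := \int_\Gamma \vrho\,d\H^{n-1} > 0$ is a fixed positive number independent of $m$.

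For $w \in W^{1,p}(\Omega_m;\vrho)$ satisfying \eqref{zero:trace}, set $\bar c := (w)_{\Omega_m;\vrho}$. Applying Lemma \ref{lemm:poincare} on the bounded convex set $\Omega_m$ and using $d_{\Omega_m} \leq d_\Omega$ yields
\begin{equation*}
\int_{\Omega_m} |w - \bar c|^p\,\vrho\,dx \leq C(p)\,d_\Omega^p \int_{\Omega_m} |\nabla w|^p\,\vrho\,dx
\end{equation*}
uniformly in $m$. On the strip $S$, where $\vrho$ is equivalent to the Lebesgue measure, the standard unweighted trace inequality for $W^{1,p}(S)$ together with the bound $\vrho \leq C_0$ on $\Gamma$ gives, for every $v \in W^{1,p}(\Omega_m;\vrho)$,
\begin{equation*}
\int_\Gamma |v|^p\,\vrho\,d\H^{n-1} \leq C_T\,\|v\|_{W^{1,p}(\Omega_m;\vrho)}^p,
\end{equation*}
with $C_T = C_T(n,p,\Omega,\vrho,\delta_0)$ independent of $m$. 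Applied to $v = w - \bar c$, whose trace on $\Gamma$ equals $-\bar c$ thanks to \eqref{zero:trace}, and combined with the Poincar\'e estimate above, this produces
\begin{equation*}
|\bar c|^p\,|\Gamma|_\vrho \leq C_T\,(1 + C(p)\,d_\Omega^p)\, \int_{\Omega_m} |\nabla w|^p\,\vrho\,dx.
\end{equation*}

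Finally, the triangle inequality
\begin{equation*}
\int_{\Omega_m} |w|^p\,\vrho\,dx \leq 2^{p-1}\int_{\Omega_m}|w-\bar c|^p\,\vrho\,dx + 2^{p-1}\,|\bar c|^p \int_{\Omega_m}\vrho\,dx
\end{equation*}
together with $\int_{\Omega_m}\vrho\,dx \leq \int_\Omega\vrho\,dx < \infty$ yields \eqref{poinc:zero} for every $m \geq m_0$ with some $C = C(n,p,\Omega,\vrho)$; the finitely many indices $m < m_0$ are handled by repeating the argument on a smaller positive strip inside each $\Omega_m$ and enlarging $C$ accordingly. The delicate step is the uniform weighted trace inequality above, which hinges crucially on $\vrho$ being strictly positive and continuous in a fixed neighbourhood of the top face $\Gamma$; no analogous uniform bound would be available near $\{x_n = F(x')\}$, where $\vrho$ may vanish, which is precisely why \eqref{zero:trace} is imposed on this ``good'' face.
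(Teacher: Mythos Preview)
Your argument is correct and is genuinely different from the paper's. The paper proceeds by contradiction: it normalizes a hypothetical violating sequence $v_m$, extends each $v_m$ from $\Omega_m$ to the fixed domain $\Omega$ by even reflection across $\mathrm{Graph}\,F_m$ (this is the content of Lemma~\ref{lem:tech}, which ensures the reflected weight is controlled), applies the compactness Theorem~\ref{thm:cpt} to extract a limit $v$, and derives a contradiction from $\nabla v=0$ together with the zero trace on $\Gamma$.

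Your route is more direct and more elementary: you never leave $\Omega_m$, you do not need the reflection map $\Phi_m$, Lemma~\ref{lem:tech}, or the compactness theorem, and in principle your constant could be made explicit. The only ingredient beyond Lemma~\ref{lemm:poincare} is the observation that a fixed strip near $\Gamma$ sits inside every $\Omega_m$ (for $m\ge m_0$) and carries a weight bounded between positive constants, so the unweighted trace theorem applies uniformly. What the paper's approach buys, however, is that the reflection construction and the estimate recorded in Remark~\ref{remark:utile} are reused later in Section~\ref{section_thm23}; your proof of Theorem~\ref{thm:zerotrpo} alone would not supply that extension tool.
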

A few comments are in order. Thanks to assumption \eqref{vrho:graph}, $\{ \vrho = 0\} \subset \{x_n = F(x')\}$ and then we have $W^{1,p}(\Omega_m;\vrho)=W^{1,p}(\Omega_m)$, hence traces are well defined. Theorem \ref{thm:zerotrpo} states that if $u\in W^{1,p}(\Omega_m)$ has zero trace on $\{x=(x',K),\,|x'|<R\}$ then it satisfies the weighted Poincar\'e inequality \eqref{poinc:zero}, with constant $C$ independent of $m$. 

Before proving Theorem \ref{thm:zerotrpo}, we need the following lemma.
\begin{lemma}\label{lem:tech}
    Let $\Omega,\Omega_m,\vrho$ satisfy \eqref{supergraph}-\eqref{vrho:graph}, and let $\Phi_m$ be the function defined as
    \begin{equation}\label{def:Phim}
        \Phi_m(x)=\Phi_m(x',x_n)=
        \begin{cases}
            x\quad & \text{if $x\in \Omega_m$}
            \\
            (x',2\,F_m(x')-x_n)\quad & \text{if $x\in \Omega\setminus \Omega_m$.}
        \end{cases}
    \end{equation}
Then there exists $m_0>0$ such that
\begin{equation}\label{prop:vrhom}
    \vrho(x)\leq 2\,\vrho \big( \Phi_m (x)\big)\,,\quad x\in \Omega,
\end{equation}
    for all $m>m_0$.
\end{lemma}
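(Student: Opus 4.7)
For $x \in \Omega_m$ the claim is trivial since $\Phi_m(x) = x$. So I concentrate on $x = (x', x_n) \in \Omega \setminus \Omega_m$ and set $\epsilon := F_m(x') - x_n \in (0, \eta_m]$ with $\eta_m := \|F_m - F\|_{L^\infty(\{|x'|\leq R\})} \to 0$. Then $\Phi_m(x) = (x', F_m(x') + \epsilon)$, so $x$ and $\Phi_m(x)$ are symmetric about the height $F_m(x')$. The aim is to reduce the desired two-point comparison to a comparison of $\vrho(x)$ with $\vrho(y)$ for some auxiliary $y$ that is uniformly bounded away from the zero set of $\vrho$.

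The main device is a chain-of-midpoints argument along the vertical fibre through $x$. Set
\begin{equation*}
y_k := (x', F_m(x') + (2k-1)\epsilon), \qquad k = 0, 1, \dots, N,
\end{equation*}
so that $y_0 = x$, $y_1 = \Phi_m(x)$, and each $y_k$ is the midpoint of $y_{k-1}$ and $y_{k+1}$. Log-concavity of $\vrho$ then yields $\vrho(y_k)^2 \geq \vrho(y_{k-1})\vrho(y_{k+1})$, so the ratios $R_k := \vrho(y_{k-1})/\vrho(y_k)$ are non-decreasing in $k$. Telescoping gives
\begin{equation*}
R_1 \leq \Big(\prod_{k=1}^N R_k\Big)^{1/N} = \bigg(\frac{\vrho(x)}{\vrho(y_N)}\bigg)^{1/N}.
\end{equation*}

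To choose $N$, I first fix $\delta > 0$ small enough that the compact slab $S := \{(x', t) : |x'| \leq R,\ F(x') + \delta/2 \leq t \leq F(x') + 3\delta/2\}$ lies inside $\Omega$. Positivity and continuity of $\vrho$ on the compact set $S$ give $v_\delta := \inf_S \vrho > 0$; uniform continuity of $\vrho$ on $\overline{E}$ together with the uniform convergence $F_m \to F$ ensures that the analogous $m$-slab $S^{(m)}$, obtained by replacing $F$ by $F_m$ in the definition of $S$, still satisfies $\vrho \geq v_\delta/2$ for $m$ large. I then take $N = \lceil \delta/(2\epsilon) \rceil$; a direct computation shows $(2N-1)\epsilon \in [\delta/2, 3\delta/2]$ as soon as $\epsilon \leq \delta/2$, so that $y_N \in S^{(m)}$ and $\vrho(y_N) \geq v_\delta/2$. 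Writing $M := \max_{\overline{E}} \vrho$, the telescoping bound becomes $R_1 \leq (2M/v_\delta)^{1/N}$; since $N \geq \delta/(2\eta_m) \to \infty$ uniformly in $x$, I can fix $m_0$ large so that $(2M/v_\delta)^{1/N} \leq 2$ for all $m > m_0$, which is exactly \eqref{prop:vrhom}.

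The central obstacle is that $\vrho$ may vanish on $\partial \Omega$, so uniform continuity of $\vrho$ alone cannot propagate a multiplicative comparison between $x$ and $\Phi_m(x)$ when both lie close to the zero set of $\vrho$. The midpoint chain is precisely the device that overcomes this: log-concavity converts the near-boundary ratio $\vrho(x)/\vrho(\Phi_m(x))$ into the $N$-th root of the global ratio $\vrho(x)/\vrho(y_N)$, where $y_N$ is anchored at a uniform distance from the zero set, and letting the chain grow long tames the multiplicative factor.
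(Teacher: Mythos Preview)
Your argument is correct and takes a genuinely different route from the paper. The paper proceeds by contradiction: assuming a violating sequence $\{x_m\}\subset\Omega\setminus\Omega_m$, it extracts a convergent subsequence with limit $x_0\in\mathrm{Graph}\,F$ and splits into two cases. If $\vrho(x_0)>0$, continuity of $\vrho$ forces both $\vrho(x_m)$ and $\vrho(\Phi_m(x_m))$ to the same positive limit, contradicting $\vrho(x_m)>2\vrho(\Phi_m(x_m))$. If $\vrho(x_0)=0$, then $h(x_m)\to+\infty$, and convexity of $h$ along the vertical segment $\{x_m'\}\times[(x_m)_n,K]$ shows that $h$ attains its maximum at $x_m$; since $\Phi_m(x_m)$ lies on this segment, $\vrho(x_m)\leq\vrho(\Phi_m(x_m))$, again a contradiction.

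Your approach is instead direct and quantitative: the midpoint chain $y_0=x,\,y_1=\Phi_m(x),\ldots,y_N$ together with log-concavity bounds the near-boundary ratio $\vrho(x)/\vrho(\Phi_m(x))$ by the $N$-th root of the globally controlled ratio $\vrho(x)/\vrho(y_N)$, with $y_N$ pinned in a slab where $\vrho$ is uniformly positive; then $N\geq\delta/(2\eta_m)\to\infty$ forces this root below $2$. The paper's compactness-plus-case-analysis argument is softer and perhaps more transparently isolates the role of the degenerate set $\{\vrho=0\}$, whereas your argument is constructive, avoids any case distinction, and in fact yields the stronger conclusion that the constant $2$ in \eqref{prop:vrhom} can be replaced by any number strictly larger than $1$ (at the cost of enlarging $m_0$).
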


The function $\Phi_m$ is nothing else than the  ``even''  reflection of a point $x$ with respect to the graph of $F_m$, as depicted in figure.  Also, $\Phi_m$ is Lipschitz continuous, and it is the identity map when restricted to $\mathrm{Graph}\, F_m$.

\begin{figure}[ht]\label{fig0}
\centering
\includegraphics[width=0.8\textwidth]{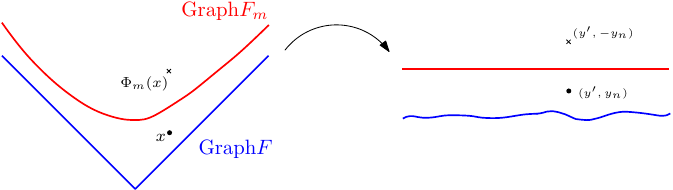}
\caption{}
\end{figure}

Roughly speaking, the point $x$ is closer to $\mathrm{Graph}\,F$ than $\Phi_m(x)$ is, so that inequality \eqref{prop:vrhom} follows by a continuity-convexity argument coupled with the Hausdorff convergence \eqref{f:fm}.

\begin{proof}[Proof of Lemma \ref{lem:tech}]
    First observe that, owing to \eqref{f:fm}, we have that $\Phi_m(\Omega\setminus \Omega_m)\subset \Omega_m$ for $m$ large enough, and then the function $\vrho\circ \Phi_m$ is well defined. 
    Assume by contradiction that \eqref{prop:vrhom} is false. Then we may find a sequence $\{x_m\}_{m\in \N}\subset \Omega\setminus \Omega_m$ such that
    \begin{equation}\label{contradi}
        \vrho(x_m) > 2\,\vrho\big( \Phi_m(x_m)\big)\,,\quad \text{for all $m\in \N$.}
    \end{equation}
    By compactness, there exists a subsequence, still denoted by $x_m$, such that $x_m\to x_0$ for some point $x_0 \in \overline{\Omega}$. By the convergence property \eqref{f:fm}, the continuity of $\vrho$, the definition \eqref{def:Phim} of the map
    $\Phi_m$ and \eqref{contradi}, we necessarily have that $x_0$ belongs to $\mathrm{Graph}\,F$, namely $x_0=(x_0',F(x'_0))$ for some $x_0'\in \R^{n-1}$, $|x'_0|\leq R$.
    
    We write $\vrho=e^{-h}$, with $h$ convex, defined in $\{(x',x_n):\,|x'|\leq 2R,\, F(x')<x_n\leq 2K\}$, and we  distinguish two cases.
    \begin{enumerate}[label=(\roman*)]
        \item $\vrho(x_0)>0$. Since $\Phi_m(x_m)\to (x'_0,F(x'_0))=x_0$ thanks to \eqref{f:fm}, we have that both $\vrho(x_m)$ and $\vrho(\Phi_m(x_m))$ converge to $\vrho(x_0)>0$. By letting $m\to\infty$ in \eqref{contradi} we find $\vrho(x_0)\geq 2\,\vrho(x_0)>0$, a contradiction.
        
        \item $\vrho(x_0)=0$. It follows that $h(x_0)=+\infty$. Since $h(x_m)\to h(x_0)=+\infty$, we may find $m_0$ such that 
        \begin{equation}\label{contradi2}
            h(x_m)>\sup_{x=(x',K)}h(x)\quad\text{for all $m\geq m_0$.}
        \end{equation}
 For any $m \geq m_0$, we write $x_m=(x'_m,(x_m)_n)$, and consider the line segment $\ell_m=\{(x'_m,z): (x_m)_n<z<K\}$. By convexity, the function $h$ restricted to $\ell_m$ admits a maximum either at $(x'_m,K)$, or at $(x'_m,(x'_m)_n)=x_m$, hence it has a unique maximum at $x_m$ by \eqref{contradi2}. Since by definition \eqref{def:Phim}, we have that $\Phi_m(x_m)\in \ell_m$, it follows that $h(x_m)\geq h\big(\Phi_m(x_m) \big)$, hence $\vrho(x_m)\leq \vrho\big( \Phi_m(x_m)\big)$, thus contradicting \eqref{contradi}.
    \end{enumerate}
\end{proof}
We are now in the position to prove Theorem \ref{thm:zerotrpo}.

\begin{proof}[Proof of Theorem \ref{thm:zerotrpo}]
    By contradiction, assume that there exists a sequence of functions $w_m\in W^{1,p}(\Omega_m;\vrho)=W^{1,p}(\Omega_m)$ such that
    \begin{equation*}
        \int_{\Omega_m} |w_m|^p\,\vrho\,dx\geq m\,\int_{\Omega_m} |\nabla w_m|^p\,\vrho\,dx\,,\quad\text{for all $m\in \N$,}
    \end{equation*}
    and $w_m(x',K)=0$ for all $|x'|<R$. Then, setting $v_m=w_m/\|w_m\|_{L^p(\Omega_m;\vrho)}$, we have that
    \begin{equation}\label{contradi:3}
        \int_{\Omega_m} |v_m|^p\,\vrho\,dx=1\,,\quad \int_{\Omega_m}|\nabla v_m|^p\,\vrho\,dx\leq \frac{1}{m}\,,
    \end{equation}
    and $v_m\in W^{1,p}(\Omega_m)$, as well as  $v_m(x',K)=0$ for all $m\in \N$. Now we define \begin{equation*}
        \tilde{v}_m(x)=v_m\big( \Phi_m(x)\big),\quad x\in \Omega\,.
    \end{equation*}
    Since $\Phi_m$ is Lipschitz continuous, and it coincides with the identity map on the boundary of $\Omega_m$ (so it preserves the trace on $\partial \Omega_m$), we have that $\tilde{v}_m\in W^{1,p}(\Omega)$-- see also the discussion in \cite[Formula (13.4)]{leo}. Also, recalling that $\Phi_m(\Omega\setminus \Omega_m)\subset \Omega_m$ for $m$ large, and since its Jacobian $|\mathrm{det}(\nabla \Phi_m)|=1$, by the chain rule, \eqref{Lip:fm} and the change of variables $y=\Phi_m(x)$, we find 
    \begin{align}\label{rem:util1}
        \int_{\Omega\setminus \Omega_m} |\nabla \tilde{v}_m(x)|^p\,\vrho\big(\Phi_m(x) \big)\,dx & \leq C(n,L_F)\,\int_{\Omega\setminus \Omega_m}\big|\nabla v_m\big(\Phi_m(x) \big)\big|^p \,\vrho\big(\Phi_m(x) \big)\,dx
        \\
        &\leq C'\,\int_{\Omega_m}|\nabla v_m(y)|^p\,\vrho(y)\,dy\leq \frac{C''}{m}\,,\nonumber
    \end{align}
    where the last inequality is due to \eqref{contradi:3}.
    Coupling the latter inequality with \eqref{prop:vrhom} and \eqref{contradi:3}, we deduce that
    \begin{equation}\label{contradi:4}
        \int_{\Omega} |\nabla \tilde{v}_m|^p\,\vrho\,dx\leq \frac{2C+1}{m}\,,\quad\text{for all $m\in \N$.}
    \end{equation}
 Moreover, a similar argument shows that
    \begin{equation}
        \int_\Omega |\tilde{v}_m|^p\,\vrho\,dx\leq C\,,\quad\text{for all $m\in \N$.}
    \end{equation}
 Therefore, we are in the position to apply the compactness Theorem \ref{thm:cpt} and extract a subsequence $\tilde{v}_{m_k}$ such that
 \begin{equation*}
     \tilde{v}_{m_k}\xrightarrow{k\to \infty} v\quad\text{weakly in $W^{1,p}(\Omega;\vrho)$, strongly in $L^p(\Omega;\vrho)$.}
 \end{equation*}
 By the lower-semicontinuity of the norm and \eqref{contradi:4}, we have that $\int_\Omega |\nabla v|^p\,\vrho\,dx=0$, thus $\nabla v=0$  a.e. in $\Omega$ and $v$ is constant in $\Omega$. Since $\vrho>0$ in $\Omega$, the trace $v(x',K)$ is well defined, and furthermore $\tilde{v}_{m_k}(x',K)\xrightarrow{k\to\infty} v(x',K)$ for a.e. $x'$ by the classical trace theory of Sobolev spaces.
 
Being $\tilde{v}_{m_k}(x',K)\equiv 0$ for all $k\in \N$,  we deduce that $v=0$ in $\Omega$.
 On the other hand, by the strong $L^p(\Omega;\vrho)$ convergence and \eqref{contradi:3}, we have
 \begin{equation*}
     \int_\Omega |v|^p\,\vrho\,dx=\lim_{k\to\infty}\int_\Omega |\tilde{v}_{m_k}|^p\,\vrho\,dx\geq \lim_{k\to\infty}\int_{\Omega_{m_k}} |\tilde{v}_{m_k}|^p\,\vrho\,dx= 1\,,
 \end{equation*}
 which contradicts the fact that $v\equiv 0$ on $\Omega$, hence the thesis.
\end{proof}

\begin{remark}\label{remark:utile}
    \rm{The proof of Theorem \ref{thm:zerotrpo} shows that, given a function $v\in W^{1,p}(\Omega_m)$, it is possible to construct its extension 
 $\tilde{v}(x)=v\big( \Phi_m(x)\big)$ for $x\in \Omega$, which satisfies $v\in W^{1,p}(\Omega)$, and
    \begin{equation}
        \int_\Omega |\nabla \tilde{v}|^p\,\vrho\,dx\leq C\,\int_{\Omega_m} |\nabla v|\,\vrho\,dx\,,\quad  \int_\Omega | \tilde{v}|^p\,\vrho\,dx\leq C\,\int_{\Omega_m} |v|^p\,\vrho\,dx\,,
    \end{equation}
    for a constant $C>0$, only depending on $n,L_F$.}
\end{remark}

Next, in the study of local estimate for $p>2$, we will need the following Poincar\'e inequality on  ``rectangular annuli'', in the same spirit of \cite[Formula (5.4)]{cia}.
In our case, we need to work on suitable rectangles instead of cubes to deal with the graph of $F$. For $a,b>0$, we write
\begin{equation}\label{def:rectangles}
    \mathcal{R}^F_{a,b}=Q'_a\times \big(-\sqrt{n}(1+L_F)\,b,\,\,\sqrt{n}(1+L_F)\,b\big),\quad \mathcal{R}^F_a\equiv \mathcal{R}^F_{a,a}\,,
\end{equation}
where $Q'_a$ is the $(n-1)$-dimensional cube of wedge $2a$, i.e., $Q'_a=\{x'\in \R^{n-1}: \max\limits_{1\leq i\leq n-1}|x'_i|<a\} $.

\begin{lemma}\label{annuli:lemma}
    Let $\Omega$ satisfy \eqref{supergraph} with $F(0')=0$ and $K>2\sqrt{n}(1+L_F)\,R$, with $K+L_F\,R<1$. Assume in addition that
    \begin{equation}\label{a:conc}
        \vrho(x)=[g(x)]^a
    \end{equation}
    for some concave function $g:E\to (0,\infty)$, and a positive number $a>0$, where $E$ is given by \eqref{def:E}.
    
    Let $\tau$ and $\sigma$ be such that  
    $$
    \frac{R}{8\,\sqrt{n}(1+L_F)}<\sigma<\tau<\frac{R}{4\,\sqrt{n}(1+L_F)} \,.
    $$ 
    Then there exists a constant $C=C(n,\vrho,L_F)$ such that
    \begin{equation}\label{poinc:annuli}
        \int_{\Omega\cap (\mathcal{R}^F_\tau\setminus \mathcal{R}^F_\sigma)} v^2\,\vrho\,dx\leq C\,\delta^2\,\int_{\Omega\cap (\mathcal{R}^F_\tau\setminus \mathcal{R}^F_\sigma)}|\nabla v|^2\,\vrho\,dx+\frac{C}{\delta^{(n+a)}}\,\left(\int_{\Omega\cap (\mathcal{R}^F_\tau\setminus \mathcal{R}^F_\sigma)}|v|\,\vrho\,dx \right)^{2}\,,
    \end{equation}
for any $v\in W^{1,2}(\Omega;\vrho)$ and $0<\delta< \frac{R}{8\,\sqrt{n}(1+L_F)}$.
\end{lemma}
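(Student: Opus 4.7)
The plan is a covering argument: partition the annular region $A:=\Omega\cap(\mathcal R^F_\tau\setminus\mathcal R^F_\sigma)$ into convex cells of diameter comparable to $\delta$, apply the weighted Poincaré inequality \eqref{ppoincare} on each cell, and exploit the structure $\vrho=g^a$ with $g$ concave to obtain a uniform lower bound $\int_{P_j}\vrho\,dx\gtrsim\delta^{n+a}$, which via Cauchy--Schwarz will deliver the $\delta^{-(n+a)}$ factor. Concretely, tile $\R^n$ by a grid of closed cubes $\{C_j\}$ of side $\delta$, retain only those with $C_j\cap A\neq\emptyset$, and set $P_j:=C_j\cap\Omega$. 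Each $P_j$ is convex (as an intersection of convex sets), has diameter $\leq \sqrt n\,\delta$, and carries the log-concave restriction of $\vrho$, so Lemma~\ref{lemm:poincare} gives
\[
\int_{P_j}(v-m_j)^2\,\vrho\,dx\leq C(n)\,\delta^2\int_{P_j}|\nabla v|^2\,\vrho\,dx,\qquad m_j:=(v)_{P_j;\vrho}.
\]
Writing $v^2\leq 2(v-m_j)^2+2m_j^2$ on each $P_j$, using the identity $m_j^2\int_{P_j}\vrho=(\int_{P_j}v\,\vrho)^2/\int_{P_j}\vrho\leq (\int_{P_j}|v|\vrho)^2/\int_{P_j}\vrho$, and summing yields
\[
\int_A v^2\,\vrho\,dx\leq 2C\,\delta^2\int_{A^\ast}|\nabla v|^2\,\vrho\,dx+2\sum_j\frac{\bigl(\int_{P_j}|v|\,\vrho\,dx\bigr)^2}{\int_{P_j}\vrho\,dx},
\]
where $A^\ast:=\bigcup_j P_j$ is a $\sqrt n\,\delta$-thickening of $A$ inside $\Omega$.

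The crucial analytic step is the cell-wise lower bound $\int_{P_j}\vrho\,dx\geq c(\vrho)\,\delta^{n+a}$, which is where the assumption \eqref{a:conc} enters decisively (cf.\ Remark~\ref{remark:annuli}). Fixing an interior reference point $x_0\in E$ with $g(x_0)=M>0$, one shows that for every $z\in E$, extending the segment from $x_0$ through $z$ until it exits $E$ at some point $y_+$ and applying concavity of $g$ on the segment gives
\[
g(z)\geq M\,\frac{|z-y_+|}{|x_0-y_+|}\geq c_0\,\dist(z,\partial E),
\]
with $c_0=c_0(M,\diam E)>0$. For a cell $P_j$ adjacent to the Lipschitz graph $\{x_n=F(x')\}$ (the only part of $\partial E$ where $\vrho$ may vanish), Fubini along $x_n$ gives $\int_{P_j}g^a\,dx\gtrsim \delta^{n-1}\int_0^{c\delta}t^a\,dt\gtrsim\delta^{n+a}$; for cells away from $\{\vrho=0\}$ one has $\vrho\geq c>0$ pointwise and $\int_{P_j}\vrho\geq c\delta^n\geq c'\delta^{n+a}$, using the a~priori upper bound on $\delta$. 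Inserting this bound and using $\sum_j y_j^2\leq(\sum_j y_j)^2$ for $y_j\geq 0$ produces
\[
\sum_j\frac{\bigl(\int_{P_j}|v|\,\vrho\,dx\bigr)^2}{\int_{P_j}\vrho\,dx}\leq\frac{C}{\delta^{n+a}}\Bigl(\int_{A^\ast}|v|\,\vrho\,dx\Bigr)^2,
\]
which combined with the previous display yields an analogue of \eqref{poinc:annuli} with $A^\ast$ in place of $A$ on the right.

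I expect two points of friction. The main one is the lower bound $\int_{P_j}\vrho\gtrsim \delta^{n+a}$ on cells adjacent to $\{\vrho=0\}$: this is the only step that genuinely uses concavity of $g$ and fails for general log-concave weights (e.g., weights vanishing exponentially near the boundary), explaining why \eqref{a:conc} cannot be relaxed and giving the content of Remark~\ref{remark:annuli}. The secondary nuisance is that the covering overshoots $A$ by a layer of thickness $\sqrt n\,\delta$, so that $A^\ast\supsetneq A$ appears on the right-hand side; this is reconciled with the stated form of \eqref{poinc:annuli} by exploiting the safety margin built into the hypotheses: since $\delta<R/(8\sqrt n(1+L_F))<\sigma$, one has $A^\ast\subset \Omega\cap(\mathcal R^F_{2\tau}\setminus\mathcal R^F_{\sigma/2})$, and by decomposing $A=A_{\mathrm{int}}\cup A_{\mathrm{bdry}}$ with $A_{\mathrm{int}}$ covered by cells entirely contained in $A$ and treating the $O(\delta)$-thick boundary slab $A_{\mathrm{bdry}}$ by a direct estimate (it contributes to both sides on the same scale), the overshoot is absorbed into the constant $C=C(n,\vrho,L_F)$.
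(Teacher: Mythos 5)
Your overall strategy is the same as the paper's: partition the annulus into convex cells of diameter $\sim\delta$, apply the weighted Poincar\'e inequality \eqref{poinc:compl} on each cell, and use the concavity of $g$ to get the cell-wise lower bound $\int_{P_j}\vrho\,dx\gtrsim\delta^{n+a}$ (your derivation of $g\gtrsim\dist(\cdot,\partial E)$ from concavity along segments is a fine variant of the paper's chord argument along vertical lines). However, there are two genuine gaps in your covering construction, both of which the paper's bespoke partition is designed to avoid.

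First, the overshoot is not absorbable. Your cells $P_j=C_j\cap\Omega$ are convex but are not contained in the annulus $A=\Omega\cap(\mathcal R^F_\tau\setminus\mathcal R^F_\sigma)$, so your final estimate has $\int_{A^\ast}|\nabla v|^2\vrho\,dx$ with $A^\ast\supsetneq A$ on the right. The conclusion \eqref{poinc:annuli} involves only $A$, and no choice of constant can dominate an integral of $|\nabla v|^2\vrho$ over $A^\ast\setminus A$ by quantities computed on $A$; your proposed fix (``treat the $O(\delta)$-thick boundary slab directly, it contributes to both sides on the same scale'') does not address this, since estimating $\int_{A_{\mathrm{bdry}}}v^2\vrho$ still forces you to use cells that either exit $A$ or, if you instead take $C_j\cap A$, fail to be convex near the edges and corners of the inner rectangle $\mathcal R^F_\sigma$ (there $A$ is locally the complement of a wedge, and a cube clipping a corner produces a non-convex L-shape). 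This weaker statement with $A^\ast$ would also break the hole-filling step \eqref{alp6}--\eqref{alp7} downstream unless that argument is modified. The paper resolves this by building an exact partition of $A$ out of rectangles adapted to the annular geometry: a layer of rectangles $\widetilde{\mathcal R}^F_i$ centered at points of $\mathrm{Graph}\,F$, columns $\hat{\mathcal R}^F_{i,j}$ above them, and a separate grid for the central block $Q'_\sigma\times(\sigma,\tau)$.

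Second, your cell-wise lower bound can fail for a generic cube grid. A cube $C_j$ retained because $C_j\cap A\neq\emptyset$ may only barely clip the epigraph of $F$, so $P_j=C_j\cap\Omega$ can be a sliver of arbitrarily small measure; then $\int_{P_j}\vrho\,dx\ll\delta^{n+a}$ and the step $\sum_j(\int_{P_j}|v|\vrho)^2/\int_{P_j}\vrho\leq C\delta^{-(n+a)}(\sum_j\int_{P_j}|v|\vrho)^2$ collapses. The paper's boundary rectangles are centered on the graph and have vertical extent $\sqrt n(1+L_F)\delta$, which guarantees (formula \eqref{contained}) that each contains a full box $Q'_{\delta/8}\times(0,\delta/8)$ above the graph after flattening; this is what makes \eqref{stima:basso} hold uniformly. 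Both gaps are repaired by replacing your generic grid with such an adapted partition, but as written your argument does not prove the lemma in the stated form.
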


\begin{proof}
   For the moment, we assume that  $$
       \delta=\frac{\tau-\sigma}{M},$$
for some integer $M>1$. Since $F(0')=0$, the Lipschitz continuity of $F$ ensures that \begin{equation}\label{stima:fff}
  |F(x')|\leq L_F\,|x'|\leq \sqrt{n}\,L_F\,\|x'\|_\infty\,,
  \end{equation}
  where $\|x'\|_\infty=\max\limits_{j=1,\dots n-1} |x'_j|$ is the sup-norm of $x'\in \R^{n-1}$. Hence by definition \eqref{def:rectangles} of $\mathcal{R}^F_a$ we have that $\Omega\cap (\mathcal{R}^F_\tau\setminus \mathcal{R}^F_\sigma)$ is not empty for any $\sigma$ and $\tau$ as above.
We divide $Q'_\tau\setminus Q'_\sigma$ into $M^{n-1}$ dyadic cubes of wedge $\delta/2$, and we denote by $\{(x')^i\}_{i=1}^{M^{n-1}}$ their centers.  Then consider the rectangles
\begin{equation*}
    \widetilde{\mathcal{R}}^f_i\coloneqq \big((x')^i, F\big((x')^i \big)\big)+\mathcal{R}^F_{\d/2}\,,\quad i=1,\dots, M^{n-1}\,,
\end{equation*}
 which are centered at points $\big((x')^i, F\big((x')^i \big)\big)$ lying on graph of $F$. By \eqref{stima:fff} and \eqref{def:rectangles}, these rectangles provide an open partition of the graph of $F$ in $\mathcal{R}^F_\tau\setminus \mathcal{R}^F_\sigma$-- up to a set of negligible measure.
Then, for every $i$, we set
\begin{equation*}
    r_i=\frac{\sqrt{n}(1+L_F)\,\tau-\big(F\big((x')^i\big)+\d\,\sqrt{n}(1+L_F)\big)}{M}
\end{equation*}
which is positive thanks to \eqref{stima:fff}, and we divide the line segment $\{(x')^i\}\times \big[F\big((x')^i\big)+\d\,\sqrt{n}(1+L_F), \sqrt{n}(1+L_F)\,\tau \big]$ into $M$ line segments of equal length $r_i$, and centers $\{(x_n^{(i)})^j\}_{j=1,\dots M}$. Hence, if we set
\begin{equation*}
    \hat{\mathcal{R}}^F_{i,j}\coloneqq \big((x')^i,(x_n^{(i)})^j\big)+\mathcal{R}^F_{\d/2,r_i/2}\,,\quad i=1,\dots, M^{n-1}, j=1,\dots, M,
\end{equation*}
we have that, by construction, the rectangles $\{\widetilde{\mathcal{R}}^F_i,\hat{\mathcal{R}}^F_{i,j}\}_{i,j}$ form a partition of $\big(\Omega\cap (\mathcal{R}^F_\tau\setminus \mathcal{R}^F_\sigma)\big)\setminus \big(Q'_\sigma\times (\sigma,\tau)\big)$ up to a Lebesgue-negligible set.

We are left to cover the set $Q'_\sigma\times (\sigma,\tau)$. To this end, let $M_1>1$ be an integer such that 
$$\frac{\sigma}{M_1+1} \leq\delta \leq \frac{\sigma}{M_1},$$
and  consider a partition  of $Q'_\sigma\times (\s,\tau)$ made of rectangles $\{Q_k\}_{k=1,\dots, M_1^{n-1}\,M}$ of size $Q_\d$ up to a translation. 
It follows that, up to a Lebesgue negligible set, the rectangles $\{\mathcal{R}_l\}_{l=1,\dots N}=\{\widetilde{\mathcal{R}}^F_i,\hat{\mathcal{R}}^F_{i,j},Q_k \}_{i,j,k}$ form a partition of $\Omega\cap (\mathcal{R}^F_\tau\setminus \mathcal{R}^F_\sigma) $, as depicted in Figure 2. 

\begin{figure}[ht]\label{fig1}
\centering
\includegraphics[width=0.6\textwidth]{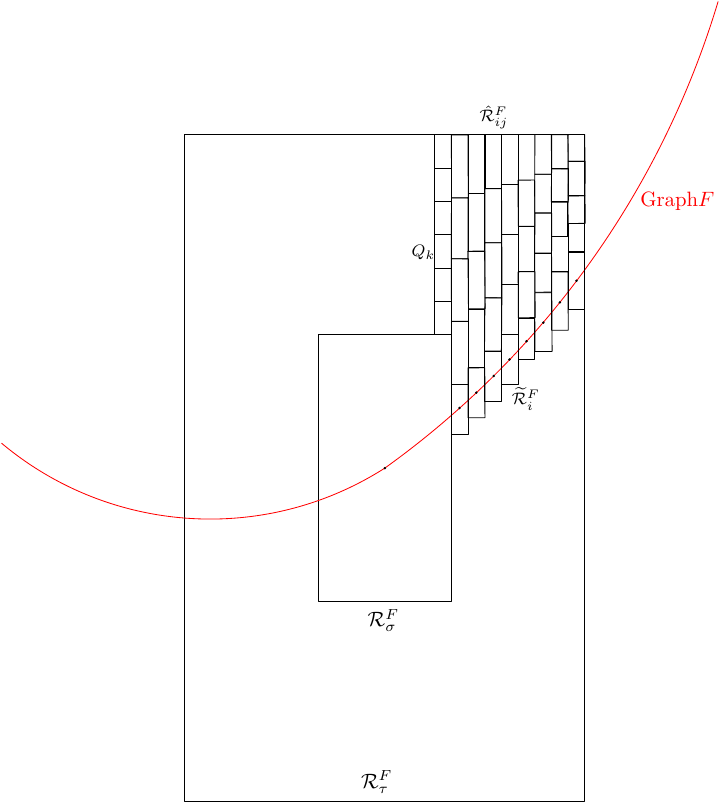}
\caption{}
\end{figure}

Observe also that
\begin{equation}\label{diam:rect}
    \mathrm{diam}\,\mathcal{R}_i\leq C(n)\,(1+L_F)\,\d\,.
\end{equation}

Now set $$c_\vrho\coloneqq \inf\limits_{x=(x',K)}\vrho(x)>0\,,$$
where $K$ is the constant in \eqref{supergraph}.

Let us define
\begin{equation}
    \Psi_F(x)=(x',x_n-F(x')),\quad \Psi_F^{-1}(y)=(y',y_n+F(y_n))\,,
\end{equation}
the map which flattens the graph of $F$ and its inverse, respectively.
We claim that
\begin{equation}\label{contained}
   \Psi_F\big((x')^i,F((x')^i) \big)+Q'_{\d/8}\times (0,\,\d/8)\subset \Psi_F\left(\widetilde{\mathcal{R}}^F_i\cap \Omega \right)\,.
\end{equation}
Indeed, for $y\in Q'_{\d/8}\times (0,\,\d\,/8)$, recalling \eqref{stima:fff}, we have
\begin{equation*}
    |y_n+F\big((x')^i+y' \big)-F\big((x')^i\big)|\leq |y_n|+L_F\,|y'|\leq \sqrt{n}\,(1+L_F)\,\d/4\,, 
\end{equation*}
and since $\Psi_F\big((x')^i,F((x')^i) \big)=\big((x')^i,0 \big)$, this implies \eqref{contained} by definition of $\widetilde{\mathcal{R}}^F_i$.
\\

We now claim that
\begin{equation}\label{stima:basso}
    \int_{\mathcal{R}_i\cap \Omega}\vrho\,dx\geq c(L_F,\vrho)\,\d^{n+a}\,,\quad\text{for all $i$,}
\end{equation}
and to prove it, we distinguish two cases.
\begin{enumerate}

        \item $\mathcal{R}_i$ is an interior rectangle, i.e., it is equal to $\hat{\mathcal{R}}^F_{i,j}$ or $Q_k$. Let $x'\in \Pi(\mathcal{R}_i)$, where $\Pi:\R^n\to \R^{n-1}$, $\Pi(x',x_n)=x'$ is the projection map, and let  
        $z_n=\inf\{t:(x',t)\in \mathcal{R}_i\}$. Consider the line segment $\ell_{x'}=\{x'\}\times [z_n,K]$. 

        By definition of $c_\vrho$, the concavity of $g$ and its nonnegativity, it follows that the graph of $g(x',\cdot)$ lies above the line segment connecting $\big(K,c_{\vrho}^{1/a} \big)$ and $(z_n,0)$, namely $g(x',x_n)\geq c_\vrho^{1/a}\frac{(x_n-z_n)}{K-z_n}$.
        In particular, being $z_n\geq F(x')$ by construction of $\mathcal{R}_i$ and  by \eqref{stima:fff}, this implies that
        \begin{align*}\vrho(x',x_n)&\geq \frac{c_\vrho}{(K-z_n)^a}(x_n-z_n)^a\geq  \frac{c_\vrho}{(K-F(x'))^a}(x_n-z_n)^a
            \\
            &\geq  \frac{c_\vrho}{(K+L_F\,|x'|)^a}(x_n-z_n)^a \geq \frac{c_\vrho\,2^a}{(K+L_F\,R)^a}(x_n-z_n)^a \\
            & \geq  c_\vrho\,2^a (x_n-z_n)^a \,,\quad \text{for all $x'\in \Pi(\mathcal{R}_i)$,}
        \end{align*}
        where in the last inequality we used that $K+L_F\,R<1$.
        
        Thus, recalling the construction of $\mathcal{R}_i$, we deduce
        \begin{equation*}
            \int_{\mathcal{R}_i}\vrho\,dx\geq c\,\int_{Q'_\d}dx'\int_{z_n}^{z_n+\d}(x_n-z_n)^a\,dx_n\geq c\,\d^{n+a}\,.
        \end{equation*}
        \item $\mathcal{R}_i=\hat{\mathcal{R}}^F_i$ is a rectangle intersecting the graph of $F$. 
        In this case, consider the function $g\circ\Psi^{-1}_F(y',y_n)=g(y',y_n+F(y'))$. For every fixed $y'$, this is a concave function of the $y_n$-variable. Hence, we may repeat the same argument as in the previous point with $z_n=0$, and considering the line segment $\{y'\}\times (0,K-F(y'))$, thus deducing that  $\vrho\circ\Psi^{-1}_F(y',y_n)\geq c\,y_n^a$. Recalling \eqref{contained}, we infer
        \begin{equation*}
            \int_{\mathcal{R}_i\cap \Omega}\vrho\,dx=\int_{\Psi_F(\mathcal{R}_i\cap \Omega)}\vrho\circ \Psi^{-1}_F\,dy\geq c\,\d^{n+a}\,,
        \end{equation*}   
        and \eqref{stima:basso} is proven.
    \end{enumerate}

Now let $v\in W^{1,2}(\Omega;\vrho)$. By Poincar\'e inequality \eqref{poinc:compl}, \eqref{diam:rect} and \eqref{stima:basso} we have 
\begin{align*}
        \int_{\Omega\cap (\mathcal{R}^F_\tau\setminus \mathcal{R}^F_\sigma)} & v^2\,\vrho\,dx=\sum_{i=1}^N \int_{\Omega\cap \mathcal{R}_i}v^2\,\vrho\,dx
        \\
        & \leq C\,\d^2\,\sum_{i=1}^N \int_{\Omega\cap \mathcal{R}_i} |\nabla v|^2\,\vrho\,dx+C\,\sum_{i=1}^N \,\left(\int_{\Omega\cap \mathcal{R}_i}\vrho\,dx \right)^{-1}\left( \int_{\Omega\cap \mathcal{R}_i}|v|\,\vrho\,dx\right)^{2}
        \\
        &\leq C\,\d^2\,\int_{\Omega\cap (\mathcal{R}^F_\tau\setminus \mathcal{R}^F_\sigma)} |\nabla v|^2\,\vrho\,dx+C\,\d^{-(n+a)}\,\left(\sum_{i=1}^N \int_{\Omega\cap \mathcal{R}_i}|v|\,\vrho\,dx\right)^{2}
        \\
        &= C\,\d^2\,\int_{\Omega\cap (\mathcal{R}^F_\tau\setminus \mathcal{R}^F_\sigma)} |\nabla v|^2\,\vrho\,dx+\frac{C}{\delta^{(n+a)}}\,\left( \int_{\Omega\cap (\mathcal{R}^F_\tau\setminus \mathcal{R}^F_\sigma)}|v|\,\vrho\,dx\right)^{2}\,.
\end{align*}
 The proof is thus completed, save that we assumed that $\d=(\tau-\sigma)/M$ for some integer $M>1$. However, for a general $\delta>0$, it suffices to take the integer $M>1$ satisfying $\frac{\tau-\s}{M+1}<\delta\leq \frac{\tau-\sigma}{M}$.
\end{proof}

\begin{remark}\label{remark:annuli}
    We exploited the particular form \eqref{a:conc} of $\vrho$ to obtain the estimate \eqref{stima:basso}, from which we deduced \eqref{poinc:annuli}. For a general log-concave function $\vrho$, estimate \eqref{stima:basso} does not hold. Consider for instance $\Omega=\R^n_+=\{x=(x',x_n)\in \R^n:x_n>0 \}$ and $\vrho(x)=\exp\{-1/x_n\}$. Since $\vrho(x)\leq x_n^{\b}$ for all $\beta>0$, an estimate of the kind  
    \begin{equation*}
        \int_{Q'_\d\times (0,\d)} \exp(-1/x_n)dx\geq c\, \d^{n+a}\,,\quad \text{for all $\d>0$}
    \end{equation*}
    cannot hold true for any $a>0$.
\end{remark}

\section{Fundamental lemmas for vector fields and generalized Reilly's identity}\label{sec:fundlem}
This section is devoted to a few differential and integral identities, which will be crucial in our proofs. The main goal is to prove Propositions \ref{prfund:neu} and \ref{prop:funddir} below, which are essentially generalizations of Reilly's identity which are suitable for the problem under consideration. 

In what follows, given a vector field $V:\Omega\to \R^n$, with $V=(V^1,\dots,V^n)$, we write
\begin{equation*}
    \nabla V=(\partial_{j} V^i)_{i,j=1,\ldots,n}\,.
\end{equation*}
We adopt the convention about summation over repeated indices.  Hence $\nabla V\,V$ is the vector whose $i$-th component agrees with $V^j\partial_{j}V^i$.

\begin{lemma}\label{lemma:1}
    Let $\Omega\subset \R^n$ be an open set. Assume $h:\Omega\to \R$, $\vrho=e^{-h}$ and  $V:\Omega\to\R^n$ are of class $C^2$ in $\Omega$. Then 
    \begin{equation}\label{id:fund}
        \vrho^{-1}\big[ \mathrm{div}\big(\vrho\,V \big)\big]^2=\vrho\,\mathrm{tr}\big[(\nabla V)^2\big]+\mathrm{div}\big(V\,\mathrm{div}(\vrho\,V)-\vrho\,\nabla V\,V \big)+\vrho\,\nabla^2 h\,V\cdot V\,.
    \end{equation}
\end{lemma}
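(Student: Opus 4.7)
The plan is to prove \eqref{id:fund} by a direct index-level computation, bootstrapping from the classical (unweighted) Reilly-type identity and then carefully absorbing the weight $\vrho=e^{-h}$. Throughout, the basic relation I will use repeatedly is $\nabla\vrho=-\vrho\,\nabla h$, together with the symmetry $\partial_i\partial_j=\partial_j\partial_i$ applied to $C^2$ functions.

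First I would establish the unweighted version
\begin{equation*}
(\Div V)^{2}=\tr\!\big[(\nabla V)^{2}\big]+\Div\!\big(V\,\Div V-\nabla V\,V\big).
\end{equation*}
This is a routine product-rule calculation: expanding $\Div(V\,\Div V)=(\Div V)^{2}+V^{i}\partial_{i}\partial_{j}V^{j}$ and $\Div(\nabla V\,V)=\partial_{i}V^{j}\partial_{j}V^{i}+V^{j}\partial_{i}\partial_{j}V^{i}$, the two second-order terms $V^{i}\partial_{i}\partial_{j}V^{j}$ and $V^{j}\partial_{i}\partial_{j}V^{i}$ cancel by swapping the summation indices and using that mixed partials commute, leaving exactly $\tr[(\nabla V)^{2}]$.

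Next I would rewrite the left-hand side of \eqref{id:fund} using $\Div(\vrho V)=\vrho(\Div V-\nabla h\cdot V)$, which yields
\begin{equation*}
\vrho^{-1}\big[\Div(\vrho V)\big]^{2}=\vrho\,(\Div V)^{2}-2\vrho\,(\Div V)(\nabla h\cdot V)+\vrho\,(\nabla h\cdot V)^{2}.
\end{equation*}
Substituting the classical identity for $(\Div V)^{2}$ and then pushing $\vrho$ inside the divergence via the identity $\vrho\,\Div W=\Div(\vrho W)+\vrho\,(\nabla h\cdot W)$ applied to $W=V\,\Div V-\nabla V\,V$, I pick up the term $\Div[\vrho\,(V\,\Div V-\nabla V\,V)]$ plus correction terms of the form $\vrho(\nabla h\cdot V)\,\Div V-\vrho\,\nabla h\cdot(\nabla V\,V)$. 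Finally, I rewrite $\Div[\vrho\,V\,\Div V]-\Div[\vrho\,\nabla V\,V]$ as $\Div\!\big(V\,\Div(\vrho V)-\vrho\,\nabla V\,V\big)+(\text{lower-order corrections})$, again using $\Div(\vrho V)=\vrho\Div V-\vrho\nabla h\cdot V$.

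Collecting everything, the terms linear in $\Div V$ cancel, the square $\vrho(\nabla h\cdot V)^{2}$ is produced twice with matching signs, and the only genuine second-order-in-$h$ contribution comes from a term of the form $-V^{i}V^{j}\partial_{i}\partial_{j}h$ multiplied by $\vrho$, arising from differentiating $\Div(\vrho V)=\vrho(\Div V-\nabla h\cdot V)$ inside $V\cdot\nabla\Div(\vrho V)$. After this rearrangement one sees the Hessian term $\vrho\,\nabla^{2}h\,V\cdot V$ with the correct sign, and the identity \eqref{id:fund} follows.

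The main obstacle is purely combinatorial: tracking all the first-order $\nabla h$, second-order $\nabla^{2}h$, $\nabla V$ and $\nabla^{2} V$ contributions without losing sign information. The cancellations of the terms $V^{i}\partial_{i}\partial_{j}V^{j}$ against $V^{j}\partial_{i}\partial_{j}V^{i}$, and of the mixed $\vrho(\nabla h\cdot V)(\Div V)$ contributions, rest entirely on the symmetry of mixed partials and on the index-renaming trick; if one performs the computation by moving all $\vrho$-dependence into a single divergence from the start, the expression becomes much shorter but the role of each hypothesis ($\vrho\in C^{2}$, $V\in C^{2}$, $h=-\log\vrho$) is then harder to read off. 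For clarity I would therefore follow the step-by-step route above.
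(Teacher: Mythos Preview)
Your approach is correct and leads to \eqref{id:fund}, but it is organised differently from the paper's argument. The paper does not pass through the unweighted identity at all: it simply expands $\vrho^{-1}\Div(\vrho\,\nabla V\,V)$ and $\vrho^{-1}\Div\big(V\,\Div(\vrho V)\big)$ separately (each in one line of index computation), subtracts the two, and multiplies by $\vrho$. The second-order terms $V^j\partial_{ij}V^i$ and $V^i\partial_{ji}V^j$ cancel directly in that subtraction, and the Hessian term $\nabla^2 h\,V\cdot V$ appears immediately from differentiating $\mathcal L_h=\Div V-\nabla h\cdot V$ in the second expansion.

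Your route---prove the unweighted $(\Div V)^2=\tr[(\nabla V)^2]+\Div(V\,\Div V-\nabla V\,V)$ first, then insert the weight via $\vrho\,\Div W=\Div(\vrho W)+\vrho\,\nabla h\cdot W$ and $\Div(\vrho V)=\vrho(\Div V-\nabla h\cdot V)$---is equally valid and arguably more conceptual, since it isolates the classical Bochner/Reilly computation from the weight manipulations. The price is that you have to chase more correction terms (the pair of $\vrho(\nabla h\cdot V)\Div V$ terms and the $\vrho(\nabla h\cdot V)^2$ terms) before they cancel, whereas the paper's direct subtraction avoids generating those terms in the first place. Your description of where $\nabla^2 h$ enters is slightly imprecise: in your scheme it actually emerges when you expand $\Div\big[\vrho\,V\,(\nabla h\cdot V)\big]$, the correction produced by passing from $\Div[\vrho V\,\Div V]$ to $\Div[V\,\Div(\vrho V)]$, rather than from a $V\cdot\nabla\Div(\vrho V)$ term. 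Once that expansion is written out explicitly the cancellations you describe do occur and the identity follows.
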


\begin{proof}
By Schwarz Theorem on second derivatives, we have
    \begin{equation*}
        V^j\,\partial_{ij}V^i=V^i\,\partial_{ji} V^j\,.
    \end{equation*}
So, recalling that $\nabla \vrho=-\vrho\,\nabla h$, we compute
    \begin{align}\label{a0}
        \vrho^{-1}\mathrm{div}\big(\vrho\,\nabla V\,V \big)=\vrho^{-1}\partial_i\big(\vrho\,V^j\,\partial_j V^i  \big) &=\partial_i V^j\,\partial_j V^i+V^j\,\partial_{ji} V^i+\frac{\partial_i\vrho}{\vrho}\,V^j\,\partial_j V^i
        \\
        &=\mathrm{tr}\big[(\nabla V)^2 \big]+V^j\,\partial_{ij}V^i-\nabla V\,V\cdot \nabla h\,.\nonumber
    \end{align}
Let us now set
\begin{equation*}
    \mathcal{L}_h\coloneqq \vrho^{-1}\,\mathrm{div}(\vrho\,V)=e^h\,\mathrm{div}\big(e^{-h}\,V\big)\,.
\end{equation*}
so that $\mathcal{L}_h=\mathrm{div} V-\nabla h\cdot V$. Thus, we compute
\begin{align}\label{a1}
        \vrho^{-1}\mathrm{div}\big(V\,\mathrm{div}(\vrho\,V)\big)&=e^h\,\mathrm{div}\big( e^{-h}\,V\,\mathcal{L}_h\big)= \mathcal{L}_h\,\mathrm{div}V+e^h\,V\,\cdot \nabla(e^{-h}\,\mathcal{L}_h)
        \\
        &=\mathcal{L}_h\,\mathrm{div} V-(V\cdot \nabla h)\,\mathcal{L}_h+V\cdot \nabla\mathcal{L}_h=(\mathcal{L}_h)^2+V\cdot\nabla\mathcal{L}_h\nonumber
        \\
        &=(\mathcal{L}_h)^2+V\cdot\nabla\big(\mathrm{div} V-\nabla h\cdot V \big)\nonumber
        \\
        &=(\mathcal{L}_h)^2+V^i\,\partial_{ij}V^j-\nabla V\,V\cdot \nabla h-\nabla^2 h\,V\cdot V\,.\nonumber
\end{align}
By subtracting equations \eqref{a1} and \eqref{a0}, and multiplying both sides by $\vrho$, we obtain
\begin{equation*}
    \mathrm{div}\big(V\,\mathrm{div}(\vrho\,V)-\vrho\,\nabla V\,V \big)=\vrho\,(\mathcal{L}_h)^2-\vrho\,\mathrm{tr}\big[(\nabla V)^2 \big]-\vrho\,\nabla^2 h\,V\cdot V\,,
\end{equation*}
which is \eqref{id:fund}.
\end{proof}

Now assume that $\Omega$ is an open set in $\rn$ such that $\partial \Omega \in C^1$.  We 
denote by $\nu=\nu(x)$ the outward unit normal to $\Omega$ at a point $x\in \partial \Omega$.  Given a vector field $V: \partial \Omega \to \rn$, its tangential component $V_T$ is defined by 
\begin{equation*}
    V_T=V-(V\cdot \nu)\,\nu.
\end{equation*}
The notations $\nabla_T$ and $\mathrm{div}_T$ are adopted for the tangential gradient and divergence operators on $\partial \Omega$. 
Therefore, if $u\in C^1(\overline{\Omega})$, then
\begin{equation*}
    \nabla_T u=\nabla u- (\partial_\nu u)\,\nu \quad \text{on $\partial \Omega$,}
\end{equation*}
where $\partial_\nu u$ denotes the normal derivative of $u$.
Moreover, if $V\in C^1(\overline{\Omega})$,  then
\begin{equation*}
    \mathrm{div}_T V=\mathrm{div}V-(\partial_\nu V\cdot\nu) \quad  \text{on $\partial \Omega$.}
\end{equation*}
We denote by $\B$ the second fundamental form on $\partial \Omega$. This coincides with the
the shape operator (also called Weingarten operator) on  $\partial \Omega$, which agrees with $\nabla_T \nu$. Namely we have
\begin{equation*}
    \B(\zeta,\gamma)=\nabla_T\nu\,\zeta\cdot\gamma\,,\quad \ \textmd{ for any }\, \zeta,\gamma\in \nu^{\perp}\,,
\end{equation*}
where $\nu^{\perp}$ is the tangent space on $\partial \Omega$.

The next result is an integral lemma involving vector fields whose normal components vanish on the boundary.
\begin{proposition}\label{prfund:neu}
    Let $\Omega$ be a domain of class $C^2$, and let $V\in C^2(\overline{\Omega};\R^n)$, $h\in C^2(\overline{\Omega})$ and $\vrho=e^{-h}$. 
    Assume that
    \begin{equation}\label{Vnuzero}
        V\cdot \nu=0\quad\text{on $\partial \Omega$,}
    \end{equation}
    with $\nu$ the outward normal to $\partial \Omega$. Then, for every $\phi\in C^\infty_c(\R^n)$ we have
    \begin{align}\label{id:fundneu00}
        \int_\Omega \vrho^{-1}\big[ \mathrm{div}\big(\vrho\,V \big)\big]^2\,\phi\,dx= & \int_\Omega\vrho\,\mathrm{tr}\big[(\nabla V)^2\big]\,\phi\,dx+\int_{\partial \Omega}\vrho\,\B(V_T,V_T)\,\phi\,d\H^{n-1}+\int_\Omega\vrho\,\nabla^2 h\,V\cdot V\,\phi\,dx
        \\
        &-\int_\Omega \Big\{(V\cdot \nabla \phi)\,\mathrm{div}\big(\vrho\,V\big)-\vrho\,\nabla V\,V\cdot\nabla \phi\,\Big\}dx.\nonumber
    \end{align}
  In particular  
    \begin{align}\label{id:fundneu}
        \int_\Omega \vrho^{-1}\big[ \mathrm{div}\big(\vrho\,V \big)\big]^2\,dx=\int_\Omega\vrho\,\mathrm{tr}\big[(\nabla V)^2\big]\,dx+\int_{\partial \Omega}\vrho\,\B(V_T,V_T)\,d\H^{n-1}+\int_\Omega\vrho\,\nabla^2 h\,V\cdot V\,dx\,.
    \end{align}
\end{proposition}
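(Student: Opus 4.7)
The plan is to integrate the pointwise identity \eqref{id:fund} of Lemma \ref{lemma:1} against $\phi$ over $\Omega$ and apply the divergence theorem to the term carrying a total divergence. Writing $W := V\,\mathrm{div}(\vrho V) - \vrho\,\nabla V\,V$, I would use
\begin{equation*}
\int_\Omega \mathrm{div}(W)\,\phi\,dx = \int_{\partial\Omega} (W\cdot\nu)\,\phi\,d\H^{n-1} - \int_\Omega W\cdot\nabla\phi\,dx,
\end{equation*}
so that the interior term $-\int_\Omega W\cdot\nabla\phi\,dx$ immediately reproduces the last integral in \eqref{id:fundneu00}. The boundary integrand reads $(V\cdot\nu)\,\mathrm{div}(\vrho V) - \vrho\,(\nabla V\,V)\cdot\nu$, and the first summand drops by the hypothesis \eqref{Vnuzero}, leaving $-\vrho\,(\nabla V\,V)\cdot\nu$.

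The heart of the argument is then the pointwise identity
\begin{equation*}
-(\nabla V\,V)\cdot\nu = \B(V_T,V_T)\qquad \text{on }\partial\Omega,
\end{equation*}
which I would establish as follows. Since $V\cdot\nu\equiv 0$ on $\partial\Omega$ and $V|_{\partial\Omega}=V_T$ is tangent to $\partial\Omega$, at each $x_0\in\partial\Omega$ I would pick a smooth curve $\gamma\colon(-\e,\e)\to\partial\Omega$ with $\gamma(0)=x_0$ and $\gamma'(0)=V(x_0)$, and differentiate $V(\gamma(t))\cdot\nu(\gamma(t))=0$ at $t=0$. The chain rule gives $(\nabla V\,V)\cdot\nu + V\cdot(\nabla_T\nu\,V_T)=0$ at $x_0$; using $V=V_T$ on $\partial\Omega$, the second summand is $V_T\cdot\nabla_T\nu\,V_T=\B(V_T,V_T)$ by the very definition of the shape operator, giving the desired identity. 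Substituting $W\cdot\nu = \vrho\,\B(V_T,V_T)$ back into the divergence-theorem identity then delivers \eqref{id:fundneu00}.

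To obtain the in-particular statement \eqref{id:fundneu} from \eqref{id:fundneu00}, I would simply choose $\phi\in C_c^\infty(\R^n)$ with $\phi\equiv 1$ on an open neighborhood of $\overline\Omega$ (tacitly using that $\Omega$ is bounded, which is consistent with the finiteness of the integrals appearing); then $\nabla\phi\equiv 0$ on $\overline\Omega$, the last integral in \eqref{id:fundneu00} vanishes, and \eqref{id:fundneu} follows at once.

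The main technical delicacy is the boundary identity: one must correctly interpret the Jacobian $\nabla V$ at boundary points, verify that the chain-rule computation along $\gamma$ involves only tangential values of $V$ (so that the hypothesis $V\cdot\nu=0$ on $\partial\Omega$ can be legitimately differentiated in a tangent direction), and invoke the geometric meaning of $\nabla_T\nu$ as the shape operator. Everything else is bookkeeping.
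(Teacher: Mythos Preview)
Your proposal is correct and follows essentially the same route as the paper: integrate the pointwise identity \eqref{id:fund} against $\phi$, apply the divergence theorem, drop the boundary term $(V\cdot\nu)\,\mathrm{div}(\vrho V)$ via \eqref{Vnuzero}, and identify $-(\nabla V\,V)\cdot\nu=\B(V_T,V_T)$ on $\partial\Omega$. The only cosmetic difference is in the last step: the paper decomposes $\nabla V\,V=\nabla_T V\,V_T+(\partial_\nu V)(V\cdot\nu)=\nabla_T V\,V_T$ and then applies the tangential Leibniz rule to $\nabla_T(V\cdot\nu)\cdot V_T$, whereas you differentiate $V(\gamma(t))\cdot\nu(\gamma(t))\equiv 0$ along a boundary curve with $\gamma'(0)=V_T$; the two computations are identical in content.
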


\begin{proof}
    Under our hypothesis, the function $h$ admits a maximum, hence $\vrho=e^{-h}$ is strictly positive in $\overline{\Omega}$. We multiply  identity \eqref{id:fund} with $\phi$, and integrating and
 using the divergence theorem gives
    \begin{align*}
        \int_\Omega  \vrho^{-1}\big[ \mathrm{div}\big(\vrho\,V \big)\big]^2\,\phi\,dx= &    \int_\Omega\vrho\,\mathrm{tr}\big[(\nabla V)^2\big]\,\phi\,dx+\int_{\partial \Omega}\Big\{(V\cdot\nu)\,\mathrm{div}(\vrho\,V)-\vrho\,\nabla V\,V \cdot\nu\Big\}\,\,\phi\,d\H^{n-1}
        \\
        &+\int_\Omega\vrho\,\nabla^2 h\,V\cdot V\,\phi\,dx-\int_\Omega \Big\{(V\cdot \nabla \phi)\,\mathrm{div}\big(\vrho\,V\big)-\vrho\,\nabla V\,V\cdot\nabla \phi\,\Big\}dx
        \\
        = &    \int_\Omega\vrho\,\mathrm{tr}\big[(\nabla V)^2\big]\,\phi\,dx-\int_{\partial \Omega}\vrho\,\nabla V\,V \cdot\nu\,\phi\,d\H^{n-1}+\int_\Omega\vrho\,\nabla^2 h\,V\cdot V\,\phi\,dx
        \\
        &-\int_\Omega \Big\{(V\cdot \nabla \phi)\,\mathrm{div}\big(\vrho\,V\big)-\vrho\,\nabla V\,V\cdot\nabla \phi\,\Big\}dx\,,
    \end{align*}
    where in the second equality we used \eqref{Vnuzero}. Now, owing to \eqref{Vnuzero}, we have that $\nabla V\,V=\nabla_T V\,V_T+\partial_\nu V\,(V\cdot\nu)=\nabla_T V\,V_T$. It follows by Leibniz rule for tangential derivatives and \eqref{Vnuzero}, that
    \begin{equation*}
        -\nabla V\,V\cdot\nu=-\nabla_T V\,V_T\,\cdot \nu=-\nabla_T (V\cdot \nu)\cdot V_T+\nabla_T\nu\,V_T\cdot V_T=\B(V_T,V_T)\,,
    \end{equation*}
    and the thesis follows by combining the two expressions above.
\end{proof}

In Section \ref{sec:dir} we will prove another version of Reilly's identity which would be suitable when one considers vector fields vanishing at the boundary. However, it seems to us that the type of weight that we are considering is not suitable for Dirichlet problems, see the discussion in Section \ref{sec:dir}.

\section{Proof of Theorem \ref{main:thmneu}}\label{sec:proof}
In this section, we provide the proof of Theorem \ref{main:thmneu}. We start with the following energy estimates for solutions to \eqref{eq:neu}.
\begin{lemma}\label{lemma:en1}
    Let $\Omega\subset \R^n$ be a bounded convex domain, $\vrho$ be a positive log-concave weight on $\Omega$, and let $u\in W^{1,p}(\Omega;\vrho)$ be a weak solution to
    \begin{equation}\label{fff}
        \begin{cases}
            -\mathrm{div}\big( \vrho\,|\nabla u|^{p-2}\nabla u\big)=\vrho\,f\quad & \text{in $\Omega$}
            \\
            \vrho\,\partial_\nu u=0\quad & \text{on $\partial \Omega$,}
        \end{cases}
    \end{equation}
such that  $\int_\Omega |u|^{p-2}u\,\vrho\,dx=0$, with $f\in L^{p'}(\Omega;\vrho)$ satisfying \eqref{f:compat}. Then we have
 \begin{equation}\label{energy:est}
     \int_\Omega |\nabla u|^p\,\vrho\,dx\leq C_0(p)\,d_\Omega^{p'}\int_\Omega |f|^{p'}\,\vrho\,dx\,.
 \end{equation}
\end{lemma}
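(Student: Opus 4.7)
The plan is to use $u$ itself as a test function in the weak formulation, and then combine H\"older's inequality with the weighted Poincar\'e inequality \eqref{poincare}, whose normalization condition $\int_\Omega |u|^{p-2}u\,\vrho\,dx = 0$ is precisely what is assumed on $u$.

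First, since $u\in W^{1,p}(\Omega;\vrho)$, it is an admissible test function in \eqref{def:neu}, so testing $\varphi = u$ yields
\begin{equation*}
    \int_\Omega |\nabla u|^p\,\vrho\,dx = \int_\Omega f\,u\,\vrho\,dx.
\end{equation*}
By H\"older's inequality with exponents $p'$ and $p$ (both quantities finite because $f\in L^{p'}(\Omega;\vrho)$ and $u\in L^p(\Omega;\vrho)$),
\begin{equation*}
    \int_\Omega f\,u\,\vrho\,dx \leq \Big(\int_\Omega |f|^{p'}\,\vrho\,dx\Big)^{1/p'}\Big(\int_\Omega |u|^p\,\vrho\,dx\Big)^{1/p}.
\end{equation*}

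Next, the assumption $\int_\Omega |u|^{p-2}u\,\vrho\,dx = 0$ puts us in the exact setting where the weighted Poincar\'e inequality \eqref{poincare} applies directly to $u$, giving
\begin{equation*}
    \Big(\int_\Omega |u|^p\,\vrho\,dx\Big)^{1/p} \leq C(p)\,d_\Omega\,\Big(\int_\Omega |\nabla u|^p\,\vrho\,dx\Big)^{1/p}.
\end{equation*}
Combining the three displays yields
\begin{equation*}
    \int_\Omega |\nabla u|^p\,\vrho\,dx \leq C(p)\,d_\Omega\,\Big(\int_\Omega |f|^{p'}\,\vrho\,dx\Big)^{1/p'}\Big(\int_\Omega |\nabla u|^p\,\vrho\,dx\Big)^{1/p}.
\end{equation*}

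Finally I would divide both sides by $\big(\int_\Omega |\nabla u|^p\,\vrho\,dx\big)^{1/p}$ (assuming it is nonzero, otherwise the conclusion is trivial) and raise to the power $p'$, using $1-1/p = 1/p'$, to obtain \eqref{energy:est} with $C_0(p) = C(p)^{p'}$. There is no real obstacle here; the only delicate point is confirming that $u$ is admissible as a test function, which follows from the definition of weak solution in \eqref{def:neu} since test functions are taken in $W^{1,p}(\Omega;\vrho)$, and confirming that the Poincar\'e inequality of \cite{fe:ni,DNP} actually applies to $u$ under the normalization hypothesis, which is exactly how \eqref{poincare} is stated.
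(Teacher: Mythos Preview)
Your proof is correct and follows essentially the same approach as the paper: test the equation with $u$ and apply the weighted Poincar\'e inequality \eqref{poincare}. The only cosmetic difference is that the paper uses Young's inequality with a parameter $\delta$ in place of your H\"older step, absorbing the gradient term after choosing $\delta$; your route via H\"older and division is slightly more direct and yields the same estimate with $C_0(p)=C(p)^{p'}$.
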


\begin{proof}
    We test equation \eqref{fff} with $u$, and  use weighted Young's inequality and Poincar\'e inequality \eqref{poincare} to get
    \begin{align*}
            \int_\Omega |\nabla u|^p\,\vrho\,dx & =\int_\Omega f\,u\,\vrho\,dx
            \\
            &\leq \frac{1}{p'\delta^{p'}}\int_\Omega |f|^{p'}\,\vrho\,dx+\frac{\delta^{p}}{p}\int_\Omega u^p\,\vrho\,dx
            \\
            &\leq \frac{1}{p'\delta^{p'}}\int_\Omega |f|^{p'}\,\vrho\,dx+ \frac{C(p)\,d_\Omega^p\delta^{p}}{p}\int_\Omega |\nabla u|^p\,\vrho\,dx\,,
    \end{align*}
    for any $\delta>0$. Estimate \eqref{energy:est} follows by choosing $\delta=d_\Omega^{-1}\Big( \frac{p}{2\,C(p)}\Big)^{1/p}$.
\end{proof}

We now split the rest of the proof into three steps.

\noindent\emph{Step 1.} For the moment, we work under the assumption 
\begin{equation}\label{f:cinf}
    f\in C^\infty(\overline{\Omega})
\end{equation}
Let $u\in W^{1,p}(\Omega;\vrho)$ be a weak solution to problem \eqref{eq:neu}. Thanks to Lemma \ref{lemm:poincare}, we may assume that
\begin{equation}
    \int_\Omega |u|^{p-2}u\,\vrho\,dx=0\,.
\end{equation}
We consider a sequence of smooth convex domains $\Omega_m$, i.e., $\partial\Omega_m\in C^\infty$, such that $ \Omega_m\subset\subset \Omega_{m+1}\subset\subset \Omega$, and
\begin{equation}\label{conv:omm}
    \lim_{m\to\infty }\mathrm{dist}_\H(\Omega_m,\Omega)=0\,,\quad\lim_{m\to\infty} d_{\Omega_m}=d_\Omega\,.
\end{equation}
This may be done, for instance, by using \cite[Corollary 6.3.10]{kranz}. Set
\begin{equation}
    f_m=f-\Big(\int_{\Omega_m}\vrho\,dx\Big)^{-1}\int_{\Omega_m}\vrho f\,dx\,,
\end{equation}
In particular
\begin{equation}\label{fm:zero}
    \int_{\Omega_m}\vrho\,f_m\,dx=0\,,
\end{equation}
and 
$f_m\to f$ as $m\to\infty$ by \eqref{f:compat}.
Consider the function $u_m$ solution to
\begin{equation}\label{eq:neum}
\begin{cases}
    -\mathrm{div}\big( \vrho\,|\nabla u_m|^{p-2}\nabla u_m\big)=\vrho\,f_m\quad & \text{in $\Omega_m$}
    \\
    \partial_\nu u_m=0\quad &\text{on $\partial \Omega_m$}
    \end{cases}
\end{equation}

Recalling that $\vrho$ is strictly positive in $\Omega_m$, we have $W^{1,p}(\Omega_m;\vrho)=W^{1,p}(\Omega_m)$. Therefore, thanks to \eqref{fm:zero}, existence of $u_m\in W^{1,p}(\Omega_m)$ is classical, since it coincides with the minimizer of the functional
\begin{equation}
    \mathcal{F}_m(v)=\frac{1}{p}\int_{\Omega_m} \vrho\,|\nabla v|^p\,dx-\int_{\Omega_m} f_m\,v\,\vrho\,dx\,,\quad v\in W^{1,p}(\Omega_m)\,,
\end{equation}
and we may choose $u_m$ satisfying
\begin{equation}\label{zerommean}
    \int_{\Omega_m} |u_m|^{p-2}u_m\,\vrho\,dx=0\,.
\end{equation}
The following proposition encompasses a few regularity and convergence properties of $u_m$.
\begin{proposition}
    Let $u_m$ be the unique solution to \eqref{eq:neum} satisfying \eqref{zerommean}. Then
    \begin{equation}\label{c1m}
    u_m\in C^{1,\gamma_m}(\overline{\Omega}_m)
\end{equation}
for some $\gamma_m \in (0,1)$ and up to a subsequence, 
\begin{equation}\label{m:convc1}
    u_m\xrightarrow{m\to\infty} u\quad\text{in $C^1_{loc}(\Omega)$}
\end{equation}
\end{proposition}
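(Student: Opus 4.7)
The plan is to prove the two claims in turn: first the boundary $C^{1,\gamma_m}$ regularity of each $u_m$, and then the $C^1_{loc}(\Omega)$ convergence via uniform interior estimates and identification of the limit.

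\emph{Regularity of $u_m$.} Since $\partial\Omega_m\in C^\infty$ and $\vrho$, being positive and log-concave, is locally Lipschitz and bounded away from zero on $\overline{\Omega}_m$, equation \eqref{eq:neum} is a quasilinear elliptic equation of $p$-Laplace type with Lipschitz coefficients, smooth right-hand side $\vrho f_m$ (by \eqref{f:cinf}), and smooth homogeneous Neumann boundary condition. The classical boundary regularity theory of Lieberman and Tolksdorf for equations of this form therefore yields $u_m\in C^{1,\gamma_m}(\overline{\Omega}_m)$ for some $\gamma_m\in(0,1)$, proving \eqref{c1m}.

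\emph{Uniform global and interior estimates.} Applying Lemma \ref{lemma:en1} to $u_m$ on $\Omega_m$ (where \eqref{zerommean} provides the required nonlinear-mean normalization and \eqref{fm:zero} the compatibility condition) gives
\begin{equation*}
\int_{\Omega_m}|\nabla u_m|^p\,\vrho\,dx\le C(p)\,d_{\Omega_m}^{p'}\int_{\Omega_m}|f_m|^{p'}\vrho\,dx,
\end{equation*}
and the right-hand side is bounded independently of $m$ because $d_{\Omega_m}\to d_\Omega$ by \eqref{conv:omm} and $f_m\to f$ in $L^{p'}(\Omega;\vrho)$. Combined with the Poincar\'e inequality \eqref{ppoincare} applied to $u_m$ on $\Omega_m$, this yields a uniform bound on $\|u_m\|_{W^{1,p}(\Omega_m;\vrho)}$. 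For any compact $K\subset\subset\Omega$ we have $K\subset\Omega_m$ for $m$ large and $\vrho\ge c_K>0$ on $K$, so $\|u_m\|_{W^{1,p}(K)}\le C_K$ uniformly. Since $\vrho$ and $f_m$ are smooth and $\vrho$ is bounded between positive constants on $K$, the interior $C^{1,\gamma}$ theory of DiBenedetto, Tolksdorf and Lieberman for $p$-Laplace type equations gives, for every $K'\subset\subset K$, a uniform bound $\|u_m\|_{C^{1,\gamma}(K')}\le C_{K,K'}$ with $\gamma\in(0,1)$ independent of $m$. Arzel\`a-Ascoli and a diagonal extraction produce a subsequence, still denoted $u_m$, and a limit $\tilde u$ with $u_m\to\tilde u$ in $C^1_{loc}(\Omega)$.

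\emph{Identification of the limit.} Testing the weak formulation of \eqref{eq:neum} against $\vphi\in C^\infty_c(\Omega)$, which is admissible for $m$ large since $\mathrm{supp}\,\vphi\subset\Omega_m$, and passing to the limit via $C^1_{loc}$ convergence shows that $\tilde u$ solves $-\mathrm{div}(\vrho|\nabla\tilde u|^{p-2}\nabla\tilde u)=\vrho f$ distributionally in $\Omega$. The main obstacle is to upgrade this interior identity to the global weak Neumann formulation \eqref{def:neu} and to transfer the normalization \eqref{zerommean} to $\tilde u$; together with uniqueness of the Neumann problem modulo additive constants, these force $\tilde u=u$. I expect this identification step to be the most delicate and would handle it as follows: combine the uniform $W^{1,p}(\Omega_m;\vrho)$ bound with an even-reflection extension of $u_m$ across $\partial\Omega_m$ in the spirit of Remark \ref{remark:utile} to produce $\tilde u_m\in W^{1,p}(\Omega;\vrho)$ uniformly bounded; extract a further subsequence that converges weakly in $W^{1,p}(\Omega;\vrho)$ and, by the compactness Theorem \ref{thm:cpt}, strongly in $L^p(\Omega;\vrho)$; the strong $L^p$ convergence, together with the uniform $L^p$ bound on $u_m$ and dominated convergence applied to $|u_m|^{p-2}u_m\in L^{p'}(\Omega;\vrho)$, transfers \eqref{zerommean} to $\tilde u$ and allows passage to the limit in the weak formulation against arbitrary $\vphi\in W^{1,p}(\Omega;\vrho)$. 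Uniqueness then gives $\tilde u=u$, and since every subsequence admits a further subsequence converging to the same $u$, the whole sequence converges, proving \eqref{m:convc1}.
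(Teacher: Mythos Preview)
Your first two parts---boundary regularity of $u_m$ and uniform interior $C^{1,\gamma}$ bounds via Arzel\`a--Ascoli---follow the paper's argument, with one omission: the interior $C^{1,\gamma}$ theory of DiBenedetto--Tolksdorf presupposes local boundedness of the solution, which the paper obtains first via Serrin's Harnack-type estimate \cite{ser64} from the uniform $L^p$ bound; you jump directly from $W^{1,p}$ to $C^{1,\gamma}$.

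The identification of the limit is where you diverge, and the paper's route is much simpler. Since $\|\A(\nabla u_m)\|_{L^{p'}(\Omega_m;\vrho)}^{p'}=\|\nabla u_m\|_{L^p(\Omega_m;\vrho)}^{p}$ is uniformly bounded by the energy estimate, the sequence $\A(\nabla u_m)\chi_{\Omega_m}$ is bounded in $L^{p'}(\Omega;\vrho)$ and has a weak limit $V$; the $C^1_{loc}$ convergence forces $V=\A(\nabla v)$ a.e.\ in $\Omega$. One then tests \eqref{eq:neum} against an arbitrary $\phi\in W^{1,p}(\Omega;\vrho)$ (its restriction to $\Omega_m$ is admissible), rewrites the left side as $\int_\Omega \A(\nabla u_m)\chi_{\Omega_m}\cdot\nabla\phi\,\vrho\,dx$, and passes to the limit by weak $L^{p'}$ convergence---a purely linear step. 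This shows $v$ solves \eqref{eq:neu}, hence $v=u$ up to an additive constant, with no extension across $\partial\Omega_m$ and no need to transfer the normalization.

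Your alternative via reflection extensions and Theorem~\ref{thm:cpt} has two problems. First, Remark~\ref{remark:utile} is tailored to the graph geometry \eqref{supergraph}--\eqref{supergraph:m} of Section~\ref{section_thm23}, not to the general smooth convex $\Omega_m$ appearing here; an analogous extension for arbitrary convex approximations would have to be built from scratch. Second, and more substantively, you assert that strong $L^p(\Omega;\vrho)$ convergence of $\tilde u_m$ ``allows passage to the limit in the weak formulation against arbitrary $\vphi\in W^{1,p}(\Omega;\vrho)$,'' but the weak formulation contains the nonlinear gradient term $|\nabla u_m|^{p-2}\nabla u_m\cdot\nabla\vphi$, and strong $L^p$ convergence of the functions themselves says nothing about it. You would need either strong gradient convergence up to $\partial\Omega$ or a Minty-type monotonicity argument, neither of which you provide. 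The paper's device of working with the stress field in $L^{p'}$ sidesteps this entirely.
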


\begin{proof}
Since $\vrho$ is strictly bounded away from zero in $\Omega_m$,  we may exploit \cite[Theorem 3.1 (a)]{cia90} and deduce that $u_m\in L^\infty(\Omega_m)$ up to an additive constant. Hence, the $C^{1,\gamma_m}$-regularity \eqref{c1m} follows by \cite[Theorem 2]{lieb88}.
    
Next, fix $\Omega'\Subset  \Omega''\Subset\Omega'''\Subset \Omega$; by \eqref{conv:omm} and \cite[Proposition 2.2.17]{henrot}, we may find $m_0>0$ such that
\begin{equation}\label{infogi}
   \Omega'\Subset  \Omega''\Subset\Omega'''\Subset \Omega_m\,,\quad\text{for all $m\geq m_0$.}
\end{equation} 
 Therefore, we may use the celebrated result of \cite{ser64} and a standard covering argument for $\Omega''$ to get
\begin{equation}\label{linfm}
    \|u_m\|_{L^\infty(\Omega'')}\leq C\, \left\{\left(\int_{\Omega'''}|u_m|^p\,dx\right)^{1/p}+1\right\}\,,
\end{equation}
for some constant $C>0$ independent on $m$. 
By using the the energy estimate \eqref{energy:est} with $u_m$, $\Omega_m$ in place of $u, \Omega$ respectively, and recalling that $d_{\Omega_m}\leq d_\Omega$, we deduce
\begin{equation}\label{en:estmm}
    \int_{\Omega_m}|\nabla u_m|^p\,\vrho\,dx\leq C\,\int_{\Omega}|f|^{p'}\,\vrho\,dx\,,
\end{equation}
with $C$ independent on $m$.
This piece of information coupled with  \eqref{infogi}, \eqref{conv:omm}, Poincar\'e inequality \eqref{poincare}
and the fact  $\inf_{\Omega'''}\vrho=\hat{c}>0$, entails
\begin{align}\label{yyy}
        \int_{\Omega'''}|u_m|^p\,dx& \leq \frac{1}{\hat{c}}\int_{\Omega_m}|u_m|^p\,\vrho\,dx\leq C\,\int_{\Omega_m}|\nabla u_m|^p\,\vrho\,dx\leq C'\,\int_{\Omega_m} |f|^{p'}\,\vrho\,dx\,,
\end{align}
with $C,C'>0$ independent on $m$.  Inequalities \eqref{linfm} and \eqref{yyy} entail the local estimate
\begin{equation}
    \|u_m\|_{L^\infty(\Omega'')}\leq C\,\quad\text{for all $m>m_0$,}
\end{equation}
for some constant $C>0$ independent on $m$. Hence, we may use the results of \cite{diben, tol} to deduce the existence of two constants $C_0>0$ and $\a\in (0,1)$ independent on $m$ such that
\begin{equation}
    \|u_m\|_{C^{1,\a}(\Omega')}\leq C_0\,,\quad\text{for all $m>m_0$, and all $\Omega'\subset \subset \Omega$.}
\end{equation}
By Ascoli-Arzel\'a's theorem, and a standard diagonal argument on $\Omega'$ and $u_m$, we may extract a subsequence $\{u_{m_k}\}_{k\in \N}$ such that
\begin{equation}\label{intanto}
    u_{m_k}\xrightarrow{k\to\infty} v\quad\text{in $C^1_{loc}(\Omega)$,}
\end{equation}
for some function $v\in C^1(\Omega)$. To conclude, it suffices to show that $v=u$ up to an additive constant.

First observe that by \eqref{en:estmm}, we have
\begin{equation}
    \int_{\Omega}|\A(\nabla u_m)|^{p'}\,\chi_{\Omega_m}\vrho\,dx=\int_{\Omega_m}|\nabla u_m|^{p}\,\vrho\,dx\leq C\,,
\end{equation}
namely, the sequence $\{\A(\nabla u_m)\,\chi_{\Omega_m}\}_{m\in \N}$ is bounded in $L^{p'}(\Omega;\vrho)$, where $\A$ is given by \eqref{def:A}. So we may extract a subsequence, still labeled as $u_m$, such that
\begin{equation}\label{conv:weaklp}
    \A(\nabla u_m)\,\chi_{\Omega_m}\to V\quad\text{weakly $L^{p'}(\Omega;\vrho)$\,,}
\end{equation}
for some vector field $V\in L^{p'}(\Omega;\vrho)$. On the other hand, by \eqref{intanto} and \eqref{conv:omm}, we have
\begin{equation}
     \A(\nabla u_{m_k})\chi_{\Omega_{m_k}}\xrightarrow{k\to\infty}\A(\nabla v)\quad\text{locally uniformly in $\Omega$\,, }
\end{equation}
hence $V=\A(\nabla v)$ in $\Omega$. Now take $\phi\in W^{1,p}(\Omega;\vrho)$ as a test function in \eqref{eq:neum}, that is
\begin{equation*}
    \int_{\Omega_{m_k}}\A(\nabla u_{m_k})\cdot\nabla\phi\,\vrho\,dx=\int_{\Omega_{m_k}}f_{m_k}\,\phi\,\vrho\,dx
\end{equation*}
so that, thanks to \eqref{conv:weaklp}, by letting $k\to\infty$ we obtain
\begin{equation}
   \int_\Omega \A(\nabla v)\cdot \nabla \phi\,\vrho\,dx= \int_\Omega |\nabla v|^{p-2}\nabla v\cdot \nabla \phi\,\vrho\,dx=\int_{\Omega} f\,\phi\,\vrho\,dx\,,
\end{equation}
for all $\phi\in W^{1,p}(\Omega;\vrho)$, that is $v$ is a weak solution to \eqref{eq:neu}, and thus $v=u$ up to an additive constant, hence the assertion is proved.
\end{proof}

We will need a further regularization procedure in order to apply the identities of  Section \ref{sec:fundlem}.
For each $m\in \N$, let $\{\rme\}_{\e>0}$ be the sequence of positive, log-concave functions constructed in Proposition \ref{prop:conv}.
Set
\begin{equation*}
    \fme=f-\Big(\int_{\Omega_m}\rme\,dx\Big)^{-1}\int_{\Omega_m}\rme f\,dx\,,
\end{equation*}
so that 
\begin{equation*}
    \int_{\Omega_m}\rme\,\fme\,dx=0\,,
\end{equation*}
and $\fme\xrightarrow{\e\to 0}f_m$ by definition of $f_m$ and \eqref{conv:omm}.
Then consider the sequence $\{\ume\}_{\e>0}=\{u_{\e,m}\}_{\e>0}$ of solutions to the boundary value problem
\begin{equation}\label{eq:neuue}
\begin{cases}
    -\mathrm{div}\Big( \rme\,\big[\e^2+|\nabla \ume|^2\big]^{\frac{p-2}{2}}\,\nabla \ume \Big)=\rme\,\fme\quad & \text{in $\Omega_m$}
    \\
    \partial_\nu \ume=0\quad & \text{on $\partial\Omega_m$,}
    \end{cases}
\end{equation}
satisfying
\begin{equation}
    \int_{\Omega_m}|\ume|^{p-2}\ume\,\vrho\,dx=0\,.
\end{equation}

Once again, existence of $\ume\in W^{1,p}(\Omega_m)$ follows from the fact that it coincides to the minimizer of the functional
\begin{equation}
    \mathcal{F}_{m,\e}(v)=\int_{\Omega_m}\frac{1}{p}\big[ \e^2+|\nabla v|^2\big]^\frac{p}{2}\,\rme\,dx-\int_{\Omega_m}\fme\,v\,\rme\,dx\,,\quad v\in W^{1,p}(\Omega_m)\,.
\end{equation}

Next we claim that
\begin{equation}\label{reg:ue}
    \ume\in C^\infty(\overline{\Omega}_m)\,,
\end{equation}
and that
\begin{equation}\label{ut:1}
    \nabla \ume\xrightarrow{\e\to 0} \nabla u_m\quad\text{uniformly in $\overline{\Omega}_m$.}
\end{equation}
First, for each $\e>0$, an application of \cite[Theorem 3.1 (a)]{cia90} ensures the existence of a proper additive constant $\cme$ such that
\begin{equation*}
    \|\ume-\cme\|_{L^\infty(\Omega_m)}\leq C\,,
\end{equation*}
with constant $C>0$ independent on $\e>0$.
It follows by \cite[Theorem 2]{lieb88}, that there exist two constants $C>0$  and $\theta\in (0,1)$ both independent on $\e>0$, such that 
\begin{equation}\label{C1ue}
    \|\nabla \ume\|_{C^{0,\theta}(\overline{\Omega}_m)}\leq C\,.
\end{equation}
Hence, we may extract a subsequence $\{\umek\}_k$ such that
\begin{equation}\label{ccc}
    \nabla \umek\xrightarrow{k\to\infty} \nabla w\quad\text{in $C^{0,\theta'}(\overline{\Omega}_m)$,}
\end{equation}
for all $0<\theta'<\theta$, and for some function $w\in C^{1,\theta}(\overline{\Omega}_m)$. We just need to show that $w= u_m$ up to an additive constant; to this end, we test equation \eqref{eq:neuue} with a function $\phi\in W^{1,p}(\Omega_m)$, as to obtain
\begin{equation*}
    \int_{\Omega_m} \A_{\e_k}(\nabla \umek)\cdot\nabla \phi\,\rmek\,dx=\int_{\Omega_m} \fmek\,\phi\,\rmek\,dx\,.
\end{equation*}
Thanks to \eqref{conv:Ae} and \eqref{ccc}, we have $\A_{\e_k}(\nabla \umek)\xrightarrow{k\to\infty} \A(\nabla w)$ uniformly in $\overline{\Omega}_m$. Hence, recalling \eqref{conv:vrhom} and that $\fme\xrightarrow{\e\to 0} f_m$ uniformly, by letting $k\to \infty $ in the above identity we find that $w$ satisfies
\begin{equation*}
    \int_{\Omega_m} \A(\nabla w)\cdot\nabla \phi\,\vrho\,dx=\int_{\Omega_m} f_m\,\phi\,\vrho\,dx\,,
\end{equation*}
that is $w$ is a weak solution to \eqref{eq:neum}, hence $w=u_m$ up to an additive constant. Since the above argument can be repeated to any subsequence $\{\umek\}$, from \eqref{ccc} we obtain convergence \eqref{ut:1} for the whole sequence $\{\ume\}_{\e>0}$.
\\

We now prove \eqref{reg:ue}. Via a standard difference quotient argument, adapted to (homogeneous) Neumann boundary condition, as in \cite[Theorem 8.2]{ben:fre}, \cite[pp. 270-277]{ladyz}, or \cite[Chapter 8.4]{giusti}, we deduce that
\begin{equation*}
    \ume\in W^{2,2}(\Omega_m)\,.
\end{equation*}
Thereby, by differentiating \eqref{eq:neuue} we obtain that $\ume$ is a strong solution to
\begin{equation*}
    -\mathrm{tr}\big( \rme\,\nabla_\xi \A_\e(\nabla \ume)\,D^2 \ume\big)-\nabla \rme\cdot \A_\e(\nabla \ume)=\rme\,\fme\quad\text{in $\Omega_m$\,}
\end{equation*}
where $\nabla_\xi$ denotes the differentiation with respect to the $\xi$ variable. Taking into account the smoothness of $\A_\e(\xi), \rme, \fme$ and \eqref{C1ue}, the regularity property \eqref{reg:ue} now follows via standard elliptic regularity theory for Neumann problems \cite{lieb:obl}.

\medskip

\noindent\emph{Step 2.} In this step we prove Theorem \ref{main:thmneu} under the assumption \eqref{f:cinf}.

We start by proving a bound for  $\nabla \A_\e(\nabla \ume)$ in the $L^2(\Omega_m;\rme)$-norm which is uniform with respect to $\e$ .

Thanks to the regularity properties $\partial \Omega_m\in C^\infty$, \eqref{vrho:m}, \eqref{reg:ue}, we are in the position to apply the identity \eqref{id:fundneu} with $V=\A_\e(\nabla \ume)$ and find
\begin{align}\label{aq:1}
     \int_{\Omega_m} \rme^{-1}\big[ \mathrm{div}\big(\rme\,\A_\e(\nabla \ume)\big)\big]^2\,dx= & \int_{\Omega_m}\rme\,\mathrm{tr}\big[(\nabla \A_\e(\nabla \ume))^2\big]\,dx
     \\
     &+\int_{\partial \Omega_m}\rme\,\B_m\big([\A_\e(\nabla \ume)]_T,[\A_\e(\nabla \ume)]_T\big)\,d\H^{n-1}\nonumber
     \\
     &+\int_{\Omega_m}\rme\,\nabla^2 h_\e\,\A_\e(\nabla \ume)\cdot \A_\e(\nabla \ume)\,dx\,,\nonumber
\end{align}
where $\B_m$ denotes the second fundamental form of $\partial \Omega_m$, and we set $h_\e=-\log \rme$, which is convex by the log-concavity of $\rme$ so that
\begin{equation}\label{aq:2}
    \int_{\Omega_m}\rme\,\nabla^2 h_\e\,\A_\e(\nabla \ume)\cdot \A_\e(\nabla \ume)\,dx\geq 0\,.
\end{equation}

By the convexity of $\Omega_m$, we also have
\begin{equation}\label{aq:3}
    \int_{\partial \Omega_m}\rme\,\B_m\big([\A_\e(\nabla \ume)]_T,[\A_\e(\nabla \ume)]_T\big)\,d\H^{n-1}\geq 0\,.
\end{equation}

Also, via the chain rule, \eqref{elem:ineq} and \eqref{eigen:2}, we estimate
\begin{equation}\label{aq:4}
    \mathrm{tr}\big[(\nabla \A_\e(\nabla \ume))^2\big]=\mathrm{tr}\big[(\nabla_\xi \A_\e(\nabla \ume)\,D^2 \ume)^2\big]\geq \overline{c}(p)\,|\nabla \A_\e(\nabla \ume)|^2\,,
\end{equation}
with explicit constant 
\begin{equation}\label{explicit}
    \overline{c}(p)=2\,\frac{\min\{p-1,1\}/\max\{p-1,1\}}{1+\big(\min\{p-1,1\}/\max\{p-1,1\}\big)^2}\,.
\end{equation}
In particular, we notice that $\overline{c}(2)=1$.

Coupling identities \eqref{aq:2}-\eqref{aq:4} with \eqref{aq:1}, and recalling equation \eqref{eq:neuue}, we get
\begin{equation}\label{fin1}
    \int_{\Omega_m}\rme\,f^2_\e\,dx\geq \overline{c}(p)\,\int_{\Omega_m}\rme\,|\nabla \A_\e(\nabla \ume)|^2\,dx\,.
\end{equation}
We let $\e\to 0$ in the above inequality so that, thanks to \eqref{conv:vrhom}, \eqref{ut:1} and \eqref{conv:Ae}, we obtain
\begin{equation}\label{fin2}
    \int_{\Omega_m}\vrho\,f^2_m\,dx\geq \overline{c}(p)\,\int_{\Omega_m}\vrho\,|\nabla \A(\nabla u_m)|^2\,dx\,.
\end{equation}
Now consider $\Omega'\Subset \Omega$; from \eqref{ut:1} and \eqref{fin2} we have
\begin{equation*}
    \overline{c}(p)\,\int_{\Omega'} \vrho\,|\nabla \A(\nabla u)|^2\,dx=\lim_{m\to\infty}   \overline{c}(p)\,\int_{\Omega'} \vrho\,|\nabla \A(\nabla u_m)|^2\,dx\leq \int_{\Omega}\vrho\,f^2\,dx\,,
\end{equation*}
and by letting $\Omega'\nearrow \Omega$ and using Fatou's lemma, it follows that
\begin{equation}\label{fin:4}
     \overline{c}(p)\,\int_{\Omega} \vrho\,|\nabla \A(\nabla u)|^2\,dx\leq \int_{\Omega}\vrho\,f^2\,dx\,. 
\end{equation}
Finally, by Poincar\'e inequality \eqref{poinc:compl}, H\"older's inequality and the estimates \eqref{energy:est}, \eqref{fin:4}, we deduce

\begin{align}\label{fin:5}
        \int_\Omega |\A(\nabla u)|^2\,\vrho\,dx & \leq C(n)\,d_\Omega^2\,\int_\Omega |\nabla \A(\nabla u)|^2\,\vrho\,dx+C(n)\,\left(\int_\Omega \vrho\,dx \right)^{-1}\left(\int_\Omega |\A(\nabla u)|\,\vrho\,dx \right)^2
        \\
        &\leq C(n)\,\overline{c}(p)^{-1}\,d_\Omega^2\,\int_\Omega \vrho\,f^2\,dx+C(n)\,\left(\int_\Omega\vrho\,dx\right)^{2/p-1}\,\left(\int_\Omega |\nabla u|^p\,\vrho\,dx \right)^{2/p'}\nonumber
        \\
        &\leq  C(n)\,\overline{c}(p)^{-1}\,d_\Omega^2\,\int_\Omega \vrho\,f^2\,dx+C(n,p)\,d_\Omega^2\left(\int_\Omega\vrho\,dx\right)^{2/p-1}\left( \int_\Omega|f|^{p'}\,\vrho\,dx\right)^{2/p'}\,.\nonumber
\end{align}
Inequalities \eqref{fin:4}-\eqref{fin:5} provide the thesis of Theorem \ref{main:thmneu} under the assumption \eqref{f:cinf}.
\\

\noindent\emph{Step 3.} Now we remove assumption \eqref{f:cinf}. By Lusin's Theorem, the set $C^0_c(\Omega)$ of continuous compactly supported functions in $\Omega$ is dense in $L^2(\Omega;\vrho)\cap L^{p'}(\Omega;\vrho)$. Hence, by a standard convolution argument, we may find a sequence $\{f_k\}_{k\in\N}\subset C^\infty(\overline{\Omega})$ such that
\begin{equation}\label{conv:fk}
    f_k\xrightarrow{k\to\infty} f\quad\text{in $L^2(\Omega;\vrho)\cap L^{p'}(\Omega;\vrho)$,}
\end{equation}
and satisfying $\int_\Omega \vrho\,f_k\,dx=0$ for all $k\in \N$. 
Let $u_k$ be the solution to the Neumann problem
\begin{equation}\label{eq:uk}
    \begin{cases}
        -\mathrm{div}(\vrho\,|\nabla u_k|^{p-2}\nabla u_k)=\vrho\,f_k\quad &\text{in $\Omega$}
        \\
        \vrho\,\partial_\nu u_k=0\quad & \text{on $\partial \Omega$,}
    \end{cases}
\end{equation}
satisfying $\int_\Omega |u_k|^{p-2}u_k\,\vrho\,dx=0$.
By the results in Steps 1 and 2, namely equations \eqref{fin:4}-\eqref{fin:5}, by Poincar\'e inequality \eqref{poincare}, the energy estimate \eqref{energy:est} on $u_k$ and \eqref{conv:fk}, we have
\begin{align}\label{fin:6}
        &\int_\Omega |\nabla \A(\nabla u_k)|^2\,\vrho\,dx  \leq C(p)\,\|f\|^2_{L^2(\Omega;\vrho)}
        \\
        &\int_\Omega | \A(\nabla u_k)|^2\,\vrho\,dx  \leq C(n,p,\vrho)\,d_\Omega^2\,\left\{\|f\|^2_{L^2(\Omega;\vrho)}+\|f\|^2_{L^{p'}(\Omega;\vrho)} \right\}\nonumber
        \\
        & \int_\Omega |u_k|^p\,\vrho\,dx\leq C(p)\,d_\Omega^p\int_\Omega |\nabla u_k|^{p}\,\vrho\,dx\leq C'(p)\,d_\Omega^{p+p'}\,\int_\Omega |f|^{p'}\,\vrho\,dx\,,\nonumber
\end{align}
for all $k\in \N$. Therefore, since $\vrho$ is strictly positive in $\Omega$, then $W^{1,q}_{loc}(\Omega;\vrho)=W^{1,q}_{loc}(\Omega)$ and we may extract a subsequence, still labeled as $u_k$, such that
\begin{align}
    & \A(\nabla u_k)\xrightarrow{k\to\infty}W\quad\text{weakly in $W^{1,2}(\Omega;\vrho)$ and in $L^{p'}(\Omega;\vrho)$,}
    \\
    & u_k\xrightarrow{k\to\infty} w \quad\text{weakly in $W^{1,p}(\Omega;\vrho)$, strongly in $L^p_{loc}(\Omega)$ and a.e. in $\Omega$.}\nonumber
\end{align}
In particular, $\A(\nabla u_k)\xrightarrow{k\to\infty} \A(\nabla w)$ almost everywhere on $\Omega$, hence standard real analysis results \cite[Theorem 13.44]{HS} tell us that $W=\A(\nabla w)$. Therefore, by taking a test function $\phi\in W^{1,p}(\Omega;\vrho)$ in equation \eqref{eq:uk} and letting $k\to\infty$ shows that $w$ is a weak solution to \eqref{eq:neu}, hence $w=u$ up to an additive constant. The proof is thus completed by letting $k\to\infty$ in estimates \eqref{fin:6} by exploiting the lower-semicontinuity of the $W^{1,2}(\Omega;\vrho)$-norm with respect to the weak convergence.

\section{Proof of Theorems \ref{thm:loc1}-\ref{thm:loc2}} \label{section_thm23}
The following section is devoted to the proof of Theorems \ref{thm:loc1}-\ref{thm:loc2} in  unbounded convex domain $\Omega$.  Since $\Omega$ is convex, we may find $\ov{R}>0$ and $K>0$ such that, up to a rotation, for any $0<R \leq \ov{R}$ and $x_0\in \partial \Omega$ we have 
\begin{align}\label{supergraph1}
&\Omega\cap \big(x_0+B'_{4R}\times (-K,K)\big)=\{x=(x',x_n):\, |x'|<4R,\,F(x')<x_n<K\}
\\
&\partial\Omega\cap \big(x_0+B'_{4R}\times (-K,K)\big)=\{x=(x',x_n):\, |x'|<4R,\,x_n=F(x')\}\,,\nonumber
\end{align}
for some  $L_\Omega$-Lipschitz convex function $F$. By taking $K$ and $R$ even smaller, we may assume that $K+2\,L_F\,R<1$ and $K>100\,n\,(1+L_\Omega)R$.
Let us consider the regularized functions
\begin{equation*}
    F_m(x')=F\ast \eta_m(x')+\|F-F\ast\eta_m\|_{L^\infty(B'_{3R})}+\frac{1}{m}\quad \ \textmd{ for }\,  |x'|<3R\,,
\end{equation*}
so that $F_m\in C^\infty(B'_{2R})$ is convex, $F_m>F$, with Lipschitz constant $L_{F_m}\leq L_\Omega$ and the sequence $\{F_m\}$ satisfies \eqref{f:fm}. For $R>0$ we  define the sets $\Omega_R,\Omega_{m,R}$ as  
\begin{align}\label{OmegaR}
\Omega_R & =\{x=(x',x_n):\, |x'|<R,\,F(x')<x_n<K\}
\\
    \Omega_{m,R} & =\{x=(x',x_n):\,|x'|<R,\,F_m(x')<x_n<K\}\,,\nonumber
\end{align}
which are convex and satisfy  $\Omega_{m,R}\subset \Omega_R$, and we also set
\begin{equation*}
    \mathrm{Graph}\, F_m=\big\{x=\big(x',F_m(x')\big),\,|x'|\leq R\big\}\,.
\end{equation*}
We now study energy estimates for solutions in the regularized domain, namely we consider $u_m\in W^{1,p}(\Omega_m;\vrho)=W^{1,p}(\Omega_m)$ solutions to
\begin{equation}\label{sol:umloc}
    \begin{dcases}
        -\mathrm{div}\big(\vrho\,|\nabla u_m|^{p-2}\nabla u_m\big)=\vrho\,f\quad &\text{in $\Omega_{m,R}$}
        \\
        \partial_\nu u_m=0\quad & \text{on $\mathrm{Graph}\,F_m$}
        \\
        u_m=u\quad & \text{on $\partial \Omega_{m,R}\setminus \mathrm{Graph}\,F_m $}\,.
    \end{dcases}
\end{equation}
Let us remark that, since $\vrho>0$ in $\Omega$, and $\partial \Omega_{m,R}\setminus \mathrm{Graph} F_m\subset \Omega$, we have that the trace of $u$ on such set is well defined. Therefore, existence and uniqueness of weak solutions $u_m$ to \eqref{sol:umloc} is classical. 
The next lemma provides energy estimates for $u_m$ which are uniform in $m\in \N$.

\begin{lemma}\label{lemma:en2}
    Let $u_m$ be the unique weak solution to \eqref{sol:umloc}. Then there exists a constant $C=C(n,p,\Omega,\vrho)$ such that, for every $m\in \N$, we have
    \begin{equation}\label{stimaloc:enm}
        \int_{\Omega_{m,R}}|\nabla u_m|^p\,\vrho\,dx\leq C\,\int_{\Omega_R}\big(|u|^p+|\nabla u|^p\big)\,\vrho\,dx+C\,\int_{\Omega_R}|f|^{p'}\,\vrho\,dx\,,
    \end{equation}
    and
    \begin{equation}\label{loc:enm1}
        \int_{\Omega_{m,R}} |u_m|^p\,\vrho\,dx\leq 2\,C\,\int_{\Omega_R}\big(|u|^p+|\nabla u|^p\big)\,\vrho\,dx+C\,\int_{\Omega_R}|f|^{p'}\,\vrho\,dx.
    \end{equation}
\end{lemma}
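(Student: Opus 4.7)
The plan is to test the weak formulation of \eqref{sol:umloc} with $v_m:=u_m-u$, which lies in $W^{1,p}(\Omega_{m,R};\vrho)=W^{1,p}(\Omega_{m,R})$ and vanishes on $\partial\Omega_{m,R}\setminus\mathrm{Graph}\,F_m$ by the Dirichlet boundary condition, and then combine Young's inequality with the weighted Poincar\'e inequality of Theorem \ref{thm:zerotrpo} applied to $v_m$. The admissibility of $v_m$ in Theorem \ref{thm:zerotrpo} comes from the inclusion $\{(x',K):|x'|<R\}\subset\partial\Omega_{m,R}\setminus\mathrm{Graph}\,F_m$, and the key point is that the resulting Poincar\'e constant is independent of $m$.

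First I would plug $\varphi=v_m$ into the weak formulation of \eqref{sol:umloc}, obtaining
\begin{equation*}
\int_{\Omega_{m,R}}|\nabla u_m|^{p}\,\vrho\,dx=\int_{\Omega_{m,R}}|\nabla u_m|^{p-2}\nabla u_m\cdot\nabla u\,\vrho\,dx+\int_{\Omega_{m,R}}f\,v_m\,\vrho\,dx.
\end{equation*}
Young's inequality with a small parameter $\varepsilon>0$ bounds the first right-hand term by $\varepsilon\int_{\Omega_{m,R}}|\nabla u_m|^p\vrho\,dx+C_\varepsilon\int_{\Omega_R}|\nabla u|^p\vrho\,dx$. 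For the second term, H\"older followed by Theorem \ref{thm:zerotrpo} applied to $v_m$ gives
\begin{equation*}
\Bigl|\int_{\Omega_{m,R}}f\,v_m\,\vrho\,dx\Bigr|\leq \|f\|_{L^{p'}(\Omega_R;\vrho)}\|v_m\|_{L^p(\Omega_{m,R};\vrho)}\leq C\,\|f\|_{L^{p'}(\Omega_R;\vrho)}\|\nabla v_m\|_{L^p(\Omega_{m,R};\vrho)},
\end{equation*}
with $C$ independent of $m$. A further application of Young's inequality, together with $|\nabla v_m|^p\leq 2^{p-1}(|\nabla u_m|^p+|\nabla u|^p)$, and absorption of the $\varepsilon\int_{\Omega_{m,R}}|\nabla u_m|^p\vrho\,dx$ term on the left, yields \eqref{stimaloc:enm}; the $|u|^p$ contribution on the right is a harmless redundancy.

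For \eqref{loc:enm1}, I would write $u_m=v_m+u$, use the elementary inequality $|u_m|^p\leq 2^{p-1}(|v_m|^p+|u|^p)$, and then apply Theorem \ref{thm:zerotrpo} to $v_m$ to obtain
\begin{equation*}
\int_{\Omega_{m,R}}|u_m|^p\,\vrho\,dx\leq 2^{p-1}\int_{\Omega_R}|u|^p\,\vrho\,dx+2^{p-1}C\int_{\Omega_{m,R}}|\nabla v_m|^p\,\vrho\,dx,
\end{equation*}
after which expanding $|\nabla v_m|^p$ as above and substituting the bound \eqref{stimaloc:enm} from the previous step delivers \eqref{loc:enm1}.

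The main obstacle is precisely the $m$-uniformity of the Poincar\'e constant: the subdomains $\Omega_{m,R}$ degenerate toward the graph of $F$, where $\vrho$ may vanish, so the standard Poincar\'e inequality on each fixed $\Omega_{m,R}$ would yield constants that a priori blow up with $m$. This is exactly what the reflection construction of Lemma \ref{lem:tech} and the compactness-based contradiction argument in Theorem \ref{thm:zerotrpo} were designed to control. Once that uniform Poincar\'e inequality is in hand, the remainder of the proof is a routine testing-plus-absorption computation.
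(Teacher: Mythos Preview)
Your proposal is correct and follows essentially the same approach as the paper: test \eqref{sol:umloc} with $u_m-u$, apply Young's inequality and the $m$-uniform Poincar\'e inequality of Theorem \ref{thm:zerotrpo} to absorb, and then derive \eqref{loc:enm1} from \eqref{stimaloc:enm} via a second application of Theorem \ref{thm:zerotrpo} to $u_m-u$. Your identification of the $m$-uniformity of the Poincar\'e constant as the crux, and of Lemma \ref{lem:tech} and Theorem \ref{thm:zerotrpo} as the tools that secure it, matches the paper exactly.
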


\begin{proof}
    We test \eqref{sol:umloc} with $u_m-u$, and by using Young's inequality we get
\begin{align*}
        \int_{\Omega_{m,R}}|\nabla u_m|^p\,\vrho\,dx \leq &  \int_{\Omega_{m,R}} \vrho\,|\nabla u_m|^{p-1}\,|\nabla u|\,dx+\int_{\Omega_{m,R}}|f|\,|u_m-u|\,\vrho\,dx
        \\
        \leq & \frac{1}{4}\int_{\Omega_{m,R}} |\nabla u_m|^{p}\,\vrho\,dx+C(p)\,\int_{\Omega_{m,R}}|\nabla u|^p\,\vrho\,dx
        \\
        &+\frac{\delta^p}{p}\int_{\Omega_{m,R}}|u_m-u|^p\,\vrho\,dx+\frac{1}{\delta^{p'}p'}\int_{\Omega_{m,R}} |f|^{p'}\,\vrho\,dx\,,
\end{align*}
    for any $\delta>0$. Furthermore, $u_m-u$ satisfies the hypothesis of Theorem \ref{thm:zerotrpo}, so that
    \begin{equation*}
        \int_{\Omega_{m,R}}|u-u_m|^p\,\vrho\,dx\leq C_0\,\int_{\Omega_{m,R}} |\nabla (u-u_m)|^p\,\vrho\,dx\,,
    \end{equation*}
    for a constant $C_0$ depending on $n,p,\Omega,\vrho$. Connecting the two inequalities displayed above with the choice of $\delta^p=p\,C_0/4$, and recalling  that  $\Omega_{m,R}\subset \Omega_R$, we obtain  \eqref{stimaloc:enm}. Finally, to get \eqref{loc:enm1} it suffices to couple Poincar\'e inequality \eqref{poinc:zero}, applied to $u_m-u$, and \eqref{stimaloc:enm}.
\end{proof}

The next lemma provides regularity for $u$ and $u_m$ under the additional assumption of boundedness of $f$.

\begin{lemma}
    Let $p\in (1,\infty)$, and let $u$ be a local weak solution to \eqref{eq:neuloc} and $u_m$ be the weak solution to \eqref{sol:umloc}. Assume that $f\in L^\infty(\Omega_{R})$. Then
    \begin{equation}
        u,u_m\in C^{1}\big(\overline{\Omega}_{m,R/2}\cap \{|x_n|\leq K/2\}\big)\,,
    \end{equation}
 and we have that
    \begin{equation}\label{conve2}
        u_m\xrightarrow{m\to\infty} u\quad\text{in $C^{1}_{loc}(\Omega_{R})$}.
    \end{equation}
\end{lemma}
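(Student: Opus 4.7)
The plan is to prove the $C^1$-regularity first and then extract the $C^1_{loc}$ convergence via uniform $C^{1,\alpha}$ estimates combined with a uniqueness argument for the limit.

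First, set $S:=\overline{\Omega}_{m,R/2}\cap\{|x_n|\leq K/2\}$. By construction $F_m\geq F+1/m$ on $|x'|\leq 3R$, and the lateral faces $\{|x'|=R/2\}$ together with the top $\{x_n=K/2\}$ (since $K/2<K$) all lie strictly inside $\Omega$; hence $S\subset\subset \Omega$, and $\inf_S \vrho\geq c_m>0$ by continuity and positivity of $\vrho$ on $\Omega$. On $S$ the equation for $u$ becomes a non-degenerate weighted $p$-Laplace equation with a Lipschitz weight and bounded right-hand side, so the interior $C^{1,\alpha}$ regularity of DiBenedetto/Tolksdorf yields $u\in C^1(S)$. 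The set $S$ meets $\partial \Omega_{m,R}$ only along the smooth piece $\mathrm{Graph}\,F_m\cap\{|x'|\leq R/2\}$, on which $u_m$ satisfies the homogeneous Neumann condition $\partial_\nu u_m=0$; since $F_m\in C^\infty$ (mollification of $F$) and $\vrho$ is smooth and bounded away from zero in a neighborhood of this piece, Lieberman's $C^{1,\alpha}$ boundary regularity for conormal $p$-Laplace type problems provides $u_m\in C^1(S)$.

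For the $C^1_{loc}(\Omega_R)$ convergence, fix an arbitrary compact set $\mathcal K\subset\subset \Omega_R$. The uniform convergence $F_m\to F$ guarantees $\mathcal K\subset \Omega_{m,R}$ for $m$ large, while $\dist(\mathcal K,\mathrm{Graph}\,F)>0$ forces $\vrho\geq c_{\mathcal K}>0$ on $\mathcal K$. Combining the $m$-uniform estimate \eqref{stimaloc:enm} of Lemma~\ref{lemma:en2} with this lower bound produces uniform $W^{1,p}(\mathcal K)$ bounds on $u_m$; since $f\in L^\infty$, a Moser/Serrin iteration upgrades these to uniform $L^\infty$ bounds on slightly smaller compacta, and the $C^{1,\alpha}$ regularity theory of DiBenedetto/Tolksdorf then delivers uniform $C^{1,\alpha}(\mathcal K')$ bounds for every $\mathcal K'\subset\subset \mathcal K$, with constants independent of $m$. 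By Ascoli--Arzel\`a and a diagonal argument, some subsequence $u_{m_k}\to v$ in $C^1_{loc}(\Omega_R)$ for a function $v\in C^1(\Omega_R)$.

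It remains to identify $v=u$. Testing the weak formulation of \eqref{sol:umloc} against $\phi\in C^\infty_c(\R^n)$ whose support lies, for $m$ large, in a neighborhood of $\mathrm{Graph}\,F$ that avoids the lateral and top faces of $\partial \Omega_{m,R}$, and passing to the limit by the $C^1_{loc}$ convergence together with dominated convergence on the vanishing strip $\Omega_R\setminus \Omega_{m,R}$, one sees that $v$ is a weak solution of the weighted $p$-Neumann problem on $\Omega_R$ with trace $v=u$ on $\partial \Omega_R\setminus\mathrm{Graph}\,F$ (inherited from $u_{m_k}=u$ on the Dirichlet part via uniform convergence and $F_m\to F$). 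The difference $v-u$ vanishes on that Dirichlet portion; testing both equations with $v-u$ and using the standard monotonicity inequality for $\xi\mapsto|\xi|^{p-2}\xi$ forces $\nabla(v-u)=0$ a.e., after which Theorem~\ref{thm:zerotrpo} yields $v=u$. Since every subsequential limit equals $u$, the full sequence converges. The main obstacle lies in arranging the uniformity of the $C^{1,\alpha}$ estimates: the Poincar\'e constant entering Lemma~\ref{lemma:en2} depends implicitly on the (possibly vanishing) weight, so one must localize to compact subsets of $\Omega_R$ where $\vrho$ is two-sided bounded before invoking unweighted regularity results with constants independent of $m$.
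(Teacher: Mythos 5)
Your overall strategy coincides with the paper's: interior $C^{1,\alpha}$ estimates (Serrin plus DiBenedetto/Tolksdorf) made uniform in $m$ via the energy bound \eqref{loc:enm1} and the positive lower bound of $\vrho$ on compact subsets of $\Omega_R$, Ascoli--Arzel\`a with a diagonal argument, identification of the limit through uniqueness for the mixed problem, and Lieberman's conormal regularity on $\mathrm{Graph}\,F_m$. Your observation that $\overline{\Omega}_{m,R/2}\cap\{|x_n|\le K/2\}$ is a compact subset of $\Omega$, so that the regularity of $u$ there is purely interior, is exactly what the paper uses.

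There is, however, one genuine gap: to invoke Lieberman's $C^{1,\alpha}$ boundary regularity \cite{lieb88} for the conormal problem on $\mathrm{Graph}\,F_m$ you must first know that $u_m$ is bounded \emph{up to} that portion of $\partial\Omega_{m,R}$. The interior Serrin/Moser estimates you use elsewhere only control $\|u_m\|_{L^\infty}$ on compact subsets of the open set $\Omega_{m,R}$, hence away from the Neumann boundary, and give no information there. The paper treats this as a nontrivial step (it explicitly states that no precise reference could be found) and proves it by a Moser iteration adapted to the Neumann condition, using test functions $\vphi=\{v_m^{t}\overline{u}_m-1\}\phi^p$ that do \emph{not} vanish on $\mathrm{Graph}\,F_m$ together with a Sobolev inequality on balls centered at boundary points, arriving at \eqref{moser:fin} and \eqref{um:bounded}; only then does \cite{lieb88} yield \eqref{C1um:est}. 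Your proposal should supply this boundary $L^\infty$ bound (or a precise reference for it) before applying Lieberman. A secondary, minor point: the Dirichlet datum $v=u$ on $\partial\Omega_R\setminus\mathrm{Graph}\,F$ is not really ``inherited via uniform convergence,'' since the $C^1_{loc}(\Omega_R)$ convergence carries no information on that part of the boundary; it follows instead from the uniform bound \eqref{stimaloc:enm}, the resulting weak $W^{1,p}$ convergence, and the compactness of the trace operator on the portion of the boundary where $\vrho$ is bounded below.
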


\begin{proof}
    Observe that $\Omega_{m,R}\Subset \Omega$, and recalling that $\vrho>0$ is strictly bounded away from zero in $\Omega$, it follows by \cite{ser64} that $u\in L^\infty_{loc}(\Omega)$. Hence, thanks to \cite{diben, tol}, we have that  $u\in C^{1,\beta_m}(\overline{\Omega}_{m,R})$,
    for some $\beta_m\in (0,1)$.
Furthemore, \cite{ser64} tells us that in every subset $\Omega'\Subset \Omega''\Subset\Omega'''\Subset\Omega_{m,R}$, we have that $u_m$ is bounded with estimate
\begin{equation}\label{ser:loc}
    \|u_m\|_{L^\infty(\Omega'')}\leq C\left(\int_{\Omega'''}|u_m|^p\,dx \right)^{1/p}+C\leq \frac{C}{\inf_{\Omega'''}\vrho}\,\left(\int_{\Omega_{m,R}}|u_m|^p\,\vrho\,dx \right)^{1/p}+C\leq C'
\end{equation}
with $C,C'$ independent on $m$, where in the second inequality we used \eqref{loc:enm1}. It then follows by \cite{diben, tol} that $u\in C^{1,\theta}(\Omega')$, with 
\begin{equation}
    \|u_m\|_{C^{1,\theta}(\Omega')}\leq C
\end{equation}
where $C>1$ and $\theta\in (0,1)$ are independent on $m$. Henceforth, since $\Omega_{m,R}\nearrow \Omega_R$ as $m\to\infty$, it follows by Ascoli-Arzel\'a Theorem that we may extract a subsequence, still labeled as $u_m$, such that $u_m\xrightarrow{m\to\infty} v$ in $C^1_{loc}(\Omega_{R})$, with $v\in W^{1,p}(\Omega_R;\vrho)$ as a consequence of \eqref{stimaloc:enm}. Testing \eqref{sol:umloc} with any test function $\vphi\in C^\infty_c(\R^n)$, $\vphi=0$ on $\partial \Omega_R\setminus\mathrm{Graph}\,F $, which is admissible since $\partial \Omega_{m,R}\setminus \mathrm{Graph}\,F_m\subset \partial \Omega_R\setminus\mathrm{Graph}\,F $ by construction, and letting $m\to\infty$ we deduce that $v$ is solution to
\begin{equation*}
    \begin{cases}
             -\mathrm{div}\big(\vrho\,|\nabla v|^{p-2}\nabla v\big)=\vrho\,f\quad &\text{in $\Omega_{R}$}
        \\
        \partial_\nu v=0\quad & \text{on $\mathrm{Graph}\,F$}
        \\
        v=u\quad & \text{on $\partial \Omega_{R}\setminus \mathrm{Graph}\,F $}\,.
    \end{cases}
\end{equation*}
whence $v=u$ by uniqueness.  By repeating the same argument for any subsequence of $\{u_m\}_{m\in \N}$, we infer that any subsequence $u_{m_j}$ converges to $u$, thus \eqref{conve2} is proven. 

We are left to show the regularity of $u_m$ up to the boundary. To this end, we first need to prove that $u_m$ is bounded up to the graph of $F_m$. Since we could not find a precise reference, we quickly sketch the proof, which proceeds via Moser iteration.

Let  $M>1$ be fixed, let $x_{0,m}\in \mathrm{Graph}\,F_m$ and set $$\overline{u}_m=(u_m)_++1,\quad v_m=\min\{\overline{u}_m,M\}.$$

For $0<r_1<r_2<R/10^n$, let $\phi$ be a cut-off function satisfying $\phi\in C^\infty_c(B_{r_2}(x_{0,m}))$, $0\leq \phi\leq 1$, $\phi\equiv 1$ in $B_{r_1}(x_{0,m})$, $|\nabla \phi|\leq 8/(r_2-r_1)$.
Let $t>0$ and take $\vphi=\big\{v_m^{t}\overline{u}_m-1\big\}\,\phi^p$ as a test function in \eqref{sol:umloc}, thus obtaining
\begin{align*}
t\,\int_{\Omega_{m,R}}\vrho&\,|\nabla v_m|^p\,v_m^{t-1}\overline{u}_m\,\phi^p\,dx+\int_{\Omega_{m,R}}\vrho\,v_m^t\,|\nabla \overline{u}_m|^p\,\phi^p\,dx
\\
\leq &\,p\,\int_{\Omega_{m,R}}\vrho\,|\nabla \overline{u}_m|^{p-1}\,v_m^t\,\overline{u}_m\,\phi^{p-1}\,|\nabla \phi|\,dx +\|f\|_\infty\,\int_{\Omega_{m,R}}\vrho\,v_m^t\,\overline{u}_m\,\phi^p\,dx
    \\
    \leq &\,\frac{1}{2}\int_{\Omega_{m,R}}\vrho\,|\nabla \overline{u}_m|^p\,v_m^t\,\phi^p\,dx+C(p)\,\int_{\Omega_{m,R}}\vrho\,v_m^t\,\overline{u}_m^p\bigg\{|\nabla \phi|^p+\|f\|_\infty\,\phi^p\, \bigg\}\,dx\,,
\end{align*}
where in the last inequality we used Young's inequality and the fact that $\overline{u}_m\geq 1$.

Recalling that $\inf_{\Omega_{m,R}}\vrho>0$ and $\vrho$ is bounded, and noticing that $|\nabla v_m|^p\,v_m^{t-p}\overline{u}_m^p=|\nabla v_m| v_m^{t-1}\overline{u}_m$ since $v_m=\overline{u}_m$ in $\{\nabla v_m\neq 0\}$, the above inequality entails
\begin{align*}
     \int_{\Omega_{m,R}}|\nabla (v_m^{t/p}\,\overline{u}_m)|^p\,\phi^p\,dx\leq C\,t^{p-1}\,\int_{\Omega_{m,R}}\big|v_m^{t/p}\,\overline{u}_m\big|^p\bigg\{\phi^p+|\nabla \phi|^p\bigg\}\,dx\,.
\end{align*}
Thus, by the properties of the cut-off function $\phi$, and exploiting  Sobolev inequality on balls centered at points of $\mathrm{Graph}\,F_m$, whose quantitative constant depends on the Lipschitz constant $L_F$, we deduce
\begin{equation}\label{moser1}
    \left(\int_{\Omega_{m,R}\cap B_{r_1}(x_{0,m})}v_m^{q\,(t+p)}\,dx\right)^{1/q}\leq \frac{C}{(r_2-r_1)^p}\,t^{p-1}\,\int_{\Omega_{m,R}\cap B_{r_2}(x_{0,m})} \overline{u}_m^{t+p}\,dx\,,
\end{equation}
provided the right hand side is finite, where we also used that $v_m\leq \overline{u}_m$, and we set
\begin{equation*}
    q=\begin{cases}
        \frac{n}{n-p}\quad &1<p<n
        \\
        \text{any number $>1$}\quad & p\geq n\,.
    \end{cases}
\end{equation*}
We let $M\to \infty$ so that $v_m \nearrow \overline{u}_m$ in \eqref{moser1}, and set $\gamma=t+p$ as to obtain
\begin{equation}\label{moser2}
    \| \overline{u}_m\|_{L^{q\,\gamma}(\Omega_{m,R}\cap B_{r_1}(x_{0,m}))}\leq C\,\left(\frac{(\gamma-p)^{p-1}}{(r_2-r_1)^{p}}\right)^{1/\gamma}\,\| \overline{u}_m\|_{L^{\gamma}(\Omega_{m,R}\cap B_{r_2}(x_{0,m}))}\,,
\end{equation}
provided the right hand side is finite. We are in the position to apply Moser iteration; for $k=0,1,2,\dots$, set $\gamma_k=q^k\,p$,  and $r_k=R_0(1+2^{-k})$ for any fixed radius $R_0\leq R/(100\sqrt{n}L_F)$. Inequality \eqref{moser2} thus becomes
\begin{align*}
\|\overline{u}_m\|_{L^{\gamma_{k+1}}(\Omega_{m,R}\cap B_{r_{k+1}}(x_{0,m}))}&\leq C\,\left(\frac{2p}{R_0} \right)^{1/q^k}\,q^{k/q^k}\,\|\overline{u}_m\|_{L^{\gamma_{k}}(\Omega_{m,R}\cap B_{r_{k}}(x_{0,m}))}
    \\
   &\leq C^{1/q^k}\,q^{k/q^k}\|\overline{u}_m\|_{L^{\gamma_{k}}(\Omega_{m,R}\cap B_{r_{k}}(x_{0,m}))}\,.
\end{align*}
Iterating yields
\begin{align*}\|\overline{u}_m\|_{L^{\gamma_{k+1}}(\Omega_{m,R}\cap B_{r_{k+1}}(x_{0,m}))}\leq C^{\sum_k 1/q^k}\,q^{\sum_k k/q^k}\,\|\overline{u}_m\|_{L^p(\Omega_{m,R}\cap B_{2R_0}(x_{0,m}))}
    \\
    C'\,\|\overline{u}_m\|_{L^p(\Omega_{m,R}\cap B_{2R_0}(x_{0,m}))}\,,
\end{align*}
hence letting $k\to\infty$, and recalling the definition of $\overline{u}_m$, we deduce that for any radius $R_0$ fixed, we have
\begin{equation}\label{moser:fin}
    \|(u_m)_+\|_{L^\infty(\Omega_{m,R}\cap B_{R_0}(x_{0,m}))}\leq C\,\left(\int_{\Omega_{m,R}\cap B_{2R_0}(x_{0,m})}(u_m)_+^p\,dx+1\right)^{1/p}\,.
\end{equation}
Repeating the same argument for $(u_m)_-$ shows that $u_m$ is bounded in a neighbourhood of $\mathrm{Graph}\,F_m$. Coupling this information with \eqref{loc:enm1} and \eqref{ser:loc}, a standard covering argument tells that 
\begin{equation}\label{um:bounded}
    \|u_m\|_{L^\infty(\overline{\Omega}_{m,2/3R}\cap \{|x_n|\leq 2/3K)}\leq C_m\,,
\end{equation}
where $C_m$ depends also on $\inf_{\Omega_{m,R}}\vrho$\,. Then, by \cite[Theorem 2]{lieb88}, we finally deduce that $u_m\in C^{1,\beta_m}\big(\overline{\Omega}_{m,R/2}\cap \{|x_n|\leq K/2\}\big)$ coupled with the quantitative estimate
\begin{equation}\label{C1um:est}
    \|u_m\|_{C^{1,\beta_m}\big(\overline{\Omega}_{m,R/2}\cap \{|x_n|\leq K/2\}\big)}\leq C'_m\,,
\end{equation}
for some $\beta_m\in (0,1)$ and $C_m>0$, thus completing the proof.
\end{proof}

\medskip

\begin{proof}[Proof of Theorem \ref{thm:loc1}] $ $

\emph{Step 1.} In this step we assume that \eqref{f:cinf} is in force.
Let $u$ and $u_m$ be weak solutions to \eqref{eq:neuloc} and \eqref{sol:umloc} respectively. Fix $m\in \N$, and for all $\e>0$, let $\rme$ be the regularization of $\vrho$ given by Proposition \ref{prop:conv}, and consider $\{u_{\e,m}\}_{\e>0}\subset W^{1,p}(\Omega_{m,R})$ be the family of solutions to
\begin{equation}\label{equa:locue}
    \begin{cases}
        -\mathrm{div}\big(\rme\big[\e^2+|\nabla \ume|^2 \big]^{\frac{p-2}{2}}\nabla \ume \big)=\rme\,f\quad &\text{in $\Omega_{m,R}$}
        \\
        \partial_\nu \ume=0\quad &\text{on $\mathrm{Graph} F_m$}
        \\
        \ume=u\quad&\text{on $\partial \Omega_{m,R}\setminus \mathrm{Graph} F_m $.}
    \end{cases}
\end{equation}
By arguing as in the the proof of \eqref{stimaloc:enm}-\eqref{loc:enm1}, and exploiting the fact that $\rme\xrightarrow{\e\to 0} \vrho$ uniformly in $\Omega_{m,R}$, one infers
\begin{align}\label{loc:estue}
    \int_{\Omega_{m,R}} |\nabla \ume|^p\,\vrho\,dx & \leq C_m\,\int_{\Omega_R}\big( |u|^p+|\nabla u|^p\big)\,\vrho\,dx+C_m\,\int_{\Omega_R}|f|^{p'}\vrho\,dx
    \\
    \int_{\Omega_{m,R}} |\ume|^p\,\vrho\,dx & \leq C_m\,\int_{\Omega_R}\big( |u|^p+|\nabla u|^p\big)\,\vrho\,dx+C_m\,\int_{\Omega_R}|f|^{p'}\vrho\,dx\,,\nonumber
\end{align}
with constant $C_m$ independent on $\e$.
Also, by repeating verbatim the proof of \eqref{C1um:est} with $\ume$ in place of $u_m$, one deduces that $\ume\in C^{1,\beta_m}\big(\overline{\Omega}_{m,R/2})\cap \{|x_n|\leq K/2 \}\big)$, with quantitative estimate
\begin{equation}\label{C1:locue}
    \|\ume\|_{C^{1,\beta_m}\big(\overline{\Omega}_{m,R/2}\cap \{|x_n|\leq K/2 \}\big)}\leq C_m\,
\end{equation}
with $\beta_m\in (0,1)$ and $C_m$  independent on $\e>0$. From estimates \eqref{loc:estue}-\eqref{C1:locue} we deduce that, up to a subsequence, $\ume\xrightarrow{\e\to 0} v_m$ weakly in $W^{1,p}(\Omega_{m,R})$ and strongly in $C^{1,\beta_m'}\big(\overline{\Omega}_{m,R/2}\cap \{|x_n|\leq K/2 \}\big)$ for all $0<\beta'_m<\beta_m$, for some function $v_m\in W^{1,p}(\Omega_{m,R})$. Testing equation \eqref{equa:locue} with any function $\vphi\in C^\infty_c(\R^n)$, $\vphi=0$ on $\partial \Omega_{m,R}\setminus \mathrm{Graph} F_m$, and letting $\e\to 0$ shows that $v_m$ is solution to \eqref{sol:umloc}, hence $v_m=u_m$. We have thus shown that 
\begin{equation}\label{cisiamm}
    \ume\xrightarrow{\e\to 0} u_m\quad\text{weakly in $W^{1,p}(\Omega_{m,R})$, strongly in $C^1\big(\overline{\Omega}_{m,R/2}\cap \{|x_n|\leq K/2 \}\big)$.}
\end{equation}
Next, a difference quotient argument adapted to homogeneous Neumann problems as in \cite[Theorem 8.2]{ben:fre}, \cite[pp. 270-277]{ladyz} or \cite[Chapter 8.4]{giusti}, one finds that $\ume\in W^{2,2}\big(\Omega_{m,R/2}\cap \{|x_n|\leq K/2 \}\big)$. Hence, by differentiating \eqref{equa:locue} we find that $\ume$ is a strong solution to
\begin{equation*}
    -\mathrm{tr}\big( \rme\,\nabla_\xi \A_\e(\nabla \ume)\,D^2 \ume\big)-\nabla \rme\cdot \A_\e(\nabla \ume)=\rme\,\fme\quad\text{a.e. in $\Omega_{m,R/2}\cap \{|x_n|\leq K/2 \}$,}
\end{equation*}
 with Neumann boundary condition $\partial_\nu \ume=0$ on $\mathrm{Graph} F_m$. Since $\ume\in C^{1,\beta_m}\big(\overline{\Omega}_{m,R/2}\cap \{|x_n|\leq K/2 \}\big)$, we have 
$\A_\e(\nabla \ume)\in C^{0,\beta_m}\big(\overline{\Omega}_{m,R/2}\cap \{|x_n|\leq K/2 \}\big)$, hence 
standard elliptic regularity for conormal problems \cite{lieb:obl} ensures that
\begin{equation}\label{loc:regCinf}
    \ume\in C^\infty\big(\overline{\Omega}_{m,R/4}\cap \{|x_n|\leq K/4\}\big)\,.
\end{equation}

\noindent\emph{Step 2.} Fix $R_0\leq R/100$, $x_0\in \mathrm{Graph}\,F$, and a cut-off function $\vphi \in C^\infty_c(B_{R_0}(x_0)) $ such that
\begin{equation*}
    \vphi\equiv 1 \quad\text{in $B_{R_0/2}(x_0)$,}\quad |\nabla \vphi|\leq C(n)/R_0\,.
\end{equation*}
We apply \eqref{id:fundneu00} with the choice of $V=\A_\e(\nabla \ume)$
 and $\phi=\vphi^2$, by using \eqref{equa:locue} we obtain
\begin{align}\label{alp}
\int_{\Omega_{m,R}}\rme\,f^2\,\vphi^2\,dx  = &\,\int_{\Omega_{m,R}} \rme\,\mathrm{tr}\big[\big(\nabla \A_\e(\nabla \ume)\big)^2 \big]\,\vphi^2\,dx
    \\
    &+\int_{\mathrm{Graph}\,F_m}\rme\,\B_m\big([\A_\e(\nabla \ume)]_T,[\A_\e(\nabla \ume)]_T\big)\,\vphi^2\,d\H^{n-1}\nonumber
     \\
     &+\int_{\Omega_{m,R}}\rme\,\nabla^2 h_\e\,\A_\e(\nabla \ume)\cdot \A_\e(\nabla \ume)\,\vphi^2\,dx\nonumber
     \\
     &-2\,\int_{\Omega_{m,R}} (\A_\e(\nabla \ume)\cdot \nabla \vphi)\,\mathrm{div}\big(\rme\,\A_\e(\nabla \ume)\big)\,\vphi\,dx\nonumber
     \\
     &+2\,\int_{\Omega_{m,R}}\rme\,\nabla \A_\e(\nabla \ume)\,\A_\e(\nabla \ume)\cdot\nabla \vphi\,\vphi\,dx.\nonumber
\end{align}
By \eqref{aq:4} and the convexity of $F_m$ and $h_\e$, we have
\begin{align}
    &\int_{\Omega_{m,R}} \rme\,\mathrm{tr}\big[\big(\nabla \A_\e(\nabla \ume)\big)^2 \big]\,\vphi^2\,dx\geq \ov{c}(p)\,\int_{\Omega_{m,R}}\rme\,\big|\nabla \A_\e(\nabla \ume)\big|^2 \,\vphi^2\,dx
    \\
    &\int_{\mathrm{Graph}\,F_m}\rme\,\B_m\big([\A_\e(\nabla \ume)]_T,[\A_\e(\nabla \ume)]_T\big)\,\vphi^2\,d\H^{n-1}\geq 0\nonumber
    \\
    &\int_{\Omega_{m,R}}\rme\,\nabla^2 h_\e\,\A_\e(\nabla \ume)\cdot \A_\e(\nabla \ume)\,\vphi^2\,dx\geq 0\,.\nonumber
\end{align}
Moreover, from Young's inequality and \eqref{equa:locue}, we find
\begin{align}
2\,\left|\int_{\Omega_{m,R}} (\A_\e(\nabla \ume)\cdot \nabla \vphi)\,\mathrm{div}\big(\rme\,\A_\e(\nabla \ume)\big)\,\vphi\,dx\right|\leq & \int_{\Omega_{m,R}} \rme^{-1}\mathrm{div}\big(\rme\,\A_\e(\nabla \ume)\big)^2\,\vphi^2\,dx
        \\
        &+\int_{\Omega_{m,R}} \rme\,|\A_\e(\nabla \ume)|^2\,|\nabla \vphi|^2\,dx\nonumber
        \\
        = \int_{\Omega_{m,R}} \rme\,f^2\,\vphi^2\,dx  +C\,\int_{\Omega_{m,R}} \rme & \,|\A_\e(\nabla \ume)|^2\,|\nabla \vphi|^2\,dx\,,\nonumber
\end{align}
and
\begin{align}\label{alp1}
2\,\left|\int_{\Omega_{m,R}}\rme\,\nabla \A_\e(\nabla \ume)\,\A_\e(\nabla \ume)\cdot\nabla \vphi\,\vphi\,dx\right|\leq &  \frac{\overline{c}(p)}{2}\,\int_{\Omega_{m,R}}\rme\,|\nabla \A_\e(\nabla \ume)|^2\,\vphi^2\,dx
        \\
        &+C\,\int_{\Omega_{m,R}}\rme\,|\A_\e(\nabla \ume)|^2\,|\nabla \vphi|^2\,dx \,.\nonumber
\end{align}
Combining \eqref{alp}-\eqref{alp1}, we obtain
\begin{equation}\label{alp2}
    \int_{\Omega_{m,R}}\rme\,|\nabla \A_\e(\nabla \ume)|^2\,\vphi^2\,dx\leq C\,\int_{\Omega_{m,R}}\rme\,f^2\,\vphi^2\,dx+C\,\int_{\Omega_{m,R}}\rme\,|\A_\e(\nabla \ume)|^2\,|\nabla \vphi|^2\,dx\,.
\end{equation}
From \eqref{conv:Ae}, \eqref{conv:vrhom} and \eqref{cisiamm}, we have that $\A_\e(\nabla \ume)\xrightarrow{\e\to 0} \A(\nabla u_m)$ uniformly in $\Omega_{m,R}\cap B_{2R_0}(x_0)$, and the right hand side of \eqref{alp2} is uniformly bounded in $\e>0$. It follows that, up to a subsequence, we have $\A_\e(\nabla \ume)\xrightarrow{\e\to 0} \A(\nabla u_m)$ weakly in $W^{1,2}(\Omega_{m,R}\cap B_{R_0}(x_0))$, so by letting $\e\to 0$ in \eqref{alp2} and using the lower-semicontinuity of the norm and the properties of $\vphi$, we infer
\begin{equation}\label{alp3}
     \int_{\Omega_{m,R}\cap B_{R_0/2}(x_0)}\vrho\,|\nabla \A(\nabla u_m)|^2\,dx\leq C\,\int_{\Omega_{m,R}\cap B_{R_0}(x_0)}\vrho\,f^2\,dx+\frac{C}{R_0^2}\,\int_{\Omega_{m,R}\cap B_{R_0}(x_0)}\vrho\,|\A(\nabla u_m)|^2\,dx\,.
\end{equation}
On the other hand, since $2(p-1)\leq p$, by H\"older's inequality and \eqref{stimaloc:enm} we get
\begin{align}\label{alp4}
        \int_{\Omega_{m,R}\cap B_{R_0}(x_0)}\vrho\,|\A(\nabla u_m)|^2\,dx & \leq C_0\,\left( \int_{\Omega_{m,R}\cap B_{R_0}(x_0)}\vrho\,|\nabla u_m|^{p}\,dx\right)^{2(p-1)/p}
        \\
        &\leq C_1\,\left(\int_{\Omega_R}\vrho\,\left\{|u|^p+|\nabla u|^p\right\}\,dx\right)^{2(p-1)/p}+C_1\,\left(\int_{\Omega_R}\vrho\,|f|^{p'}\,dx\right)^{2(p-1)/p}\,,\nonumber
\end{align}
with constants $C_0,C_1$ depending on $n,p,\vrho,R_0$. Thanks to inequalities \eqref{alp3}-\eqref{alp4}, and the argument of Remark \ref{remark:utile}, we have that the sequence $\{\A(\nabla u_m)\}_{m\in \N}$ is bounded in $W^{1,2}(\Omega_R\cap B_{R_0/2}(x_0);\vrho)$, uniformly in $m\in \N$. 

This information coupled with \eqref{conve2} and the fact that 
$\Omega_{m,R}\nearrow \Omega_R$ as $m\to \infty$ enable us to deduce that, up to subsequence, we have
\begin{equation*}
    \A(\nabla u_m)\xrightarrow{m\to \infty} \A(\nabla u)\quad \text{weakly in $W^{1,2}(\Omega_R\cap B_{R_0}(x_0))$.}
\end{equation*}
Therefore, by letting $m\to \infty$ in \eqref{alp3} we obtain 
\begin{align}\label{quasifin}
        \int_{\Omega_{R}\cap B_{R_0/2}(x_0)}\vrho\,|\nabla \A(\nabla u)|^2\,dx  \leq C\,\int_{\Omega_{R}\cap B_{R_0}(x_0)}\vrho\,f^2\,dx+\frac{C}{R_0^2}\,\int_{\Omega_{R}\cap B_{R_0}(x_0)}\vrho\,|\A(\nabla u)|^2\,dx\,,
\end{align}
with $C$ depending on $n,p$, whereas by H\"older's inequality, we deduce
\begin{equation}\label{alppp4}
    \int_{\Omega_{R}\cap B_{R_0}(x_0)}\vrho\,|\A(\nabla u)|^2\,dx 
        \leq \left(\int_{\Omega_{R}\cap B_{R_0}(x_0)}\vrho\,dx\right)^{(2-p)/p}\,\left(\int_{\Omega_{R}\cap B_{R_0}(x_0)}|\nabla u|^p\,\vrho\,dx\right)^{2/p'}\,.
\end{equation}

\noindent\emph{Step 3} Now we remove assumption \eqref{f:cinf}. Let $\{f_k\}_{k\in \N}$ be a sequence of $C^\infty_c(\R^n)$ functions such that
\begin{equation}\label{lo:fktof}
    f_k\xrightarrow{k\to\infty} f\quad\text{in $L^{p'}(\Omega;\vrho)$}\,.
\end{equation}
Let $u_k$ be the unique solution to
\begin{equation}\label{eq:locuk}
    \begin{cases}
          -\mathrm{div}\big(\vrho\,|\nabla u_k|^{p-2}\nabla u_k\big)=\vrho\,f_k\quad &\text{in $\Omega_{R}$}
        \\
        \partial_\nu u_k=0\quad & \text{on $\mathrm{Graph}\,F$}
        \\
        u_k=u\quad & \text{on $\partial \Omega_{R}\setminus \mathrm{Graph}\,F $}\,.
    \end{cases}
\end{equation}
By testing \eqref{eq:locuk} with $u-u_k$, repeating analogous computations as \eqref{stimaloc:enm}-\eqref{loc:enm1}, and using \eqref{lo:fktof} we find the energy estimate
\begin{align}\label{loc:enestuk}
&\int_{\Omega_{R}}|\nabla u_k|^p\,\vrho\,dx\leq C\,\int_{\Omega_R}\big(|u|^p+|\nabla u|^p\big)\,\vrho\,dx+C\,\int_{\Omega_R}|f|^{p'}\,\vrho\,dx\,,
\\
       & \int_{\Omega_{R}} |u_k|^p\,\vrho\,dx\leq C\,\int_{\Omega_R}\big(|u|^p+|\nabla u|^p\big)\,\vrho\,dx+C\,\int_{\Omega_R}|f|^{p'}\,\vrho\,dx,\nonumber
\end{align}
which is independent on $k$. Therefore, from Theorem \ref{thm:cpt} there exists a subsequence, still labeled as $u_k$, such that
\begin{equation*}
    u_{k}\xrightarrow{k\to\infty} v\quad\text{strongly in $L^p(\Omega_R;\vrho)$, weakly in $W^{1,p}(\Omega_R;\vrho)$ and a.e. in $\Omega_R$.}
\end{equation*}
Moreover, from the results of the previous step, namely \eqref{quasifin} and \eqref{alppp4}, and by using \eqref{lo:fktof} we deduce
\begin{align}\label{quasifin2}
        &\int_{\Omega_{R}\cap B_{R_0/2}(x_0)}\vrho\,|\nabla \A(\nabla u_k)|^2\,dx  \leq C\,\int_{\Omega_{R}\cap B_{R_0}(x_0)}\vrho\,f^2\,dx+\frac{C}{R_0^2}\,\int_{\Omega_{R}\cap B_{R_0}(x_0)}\vrho\,|\A(\nabla u_k)|^2\,dx
        \\
        &\int_{\Omega_{R}\cap B_{R_0}(x_0)}\vrho\,|\A(\nabla u_k)|^2\,dx 
        \leq \left(\int_{\Omega_{R}\cap B_{R_0}(x_0)}\vrho\,dx\right)^{(2-p)/p}\,\left(\int_{\Omega_{R}\cap B_{R_0}(x_0)}|\nabla u_k|^p\,\vrho\,dx\right)^{2/p'}\,.\nonumber
\end{align}
with $C$ depending on $n,p$. Hence, by \eqref{loc:enestuk} and \eqref{quasifin2}, we may extract a subsequence, still labeled as $u_k$, such that $\A(\nabla u_k)\xrightarrow{k\to\infty} \A(\nabla u)$ weakly in $W^{1,2}(\Omega_R\cap B_{R_0/2(x_0)};\vrho)$ and $L^2(\Omega_R\cap B_{R_0(x_0)};\vrho)$, and $u_k\xrightarrow{k\to\infty}u$ in $W^{1,p}(\Omega_R\cap B_{R_0}(x_0);\vrho)$. Whence, by letting $k\to \infty$ in \eqref{quasifin2} we get the thesis. 
\end{proof}

\begin{proof}[Proof of Theorem \ref{thm:loc2}] The proof of Theorem \ref{thm:loc2} in the case $p>2$ reproduces for the most part the proof of Theorem \ref{thm:loc1}. The main difference is that, since $p>2$, we cannot directly get \eqref{alp4} via H\"older's inequality. Instead, we will exploit Poincar\'e inequality on annuli Lemma \ref{annuli:lemma}, which in conjunction with \eqref{alp3} will allow us to use the hole filling technique of Widman as in \cite[Eq. (5.15)]{accfm}, giving a control on the $L^2(\Omega;\vrho)$-norm of $\nabla \A(\nabla u)$ only via the $L^1$-norm of $\A(\nabla u)$, which in turn can be estimated via H\"older's inequality  by the $W^{1,p}(\Omega;\vrho)$-norm of $u$.
\\

\emph{Step 1.} Here we assume that \eqref{f:cinf} is valid. Let $u_m,\ume$ be the solutions to \eqref{sol:umloc} and \eqref{equa:locue} respectively, with the only difference that $\vrho$ is regularized in the following way:
\begin{equation}
    g_\e(x)=g\ast \eta_\e(x),\quad \rme(x)=[g_\e(x)]^a\,.
\end{equation}

We fix $R>0$ small enough, $x_0\in \mathrm{Graph}\,F$,  and take a sequence  $\{x_{0,m}\}\subset \mathrm{Graph}\,F_m$ such that $x_{0,m}\xrightarrow{m\to\infty} x_0$. We also write $\mathcal{R}^F_{\d}(x_{0,m})=x_{0,m}+\mathcal{R}^F_{\d}$, where $\mathcal{R}^F_{\d}$ is defined by \eqref{def:rectangles}.
Given two constants $\sigma,\tau$ such that $0<R\leq \s<\tau\leq 2R$, we take a cut-off function $\vphi\in C^\infty_c\big(\mathcal{R}^F_{\tau}(x_{0,m})  \big)$ such that
\begin{equation}\label{cutoff}
    \vphi\equiv 1\quad\text{in $\mathcal{R}^F_{\sigma}(x_{0,m})$},\quad |\nabla \vphi|\leq \frac{8}{(\tau-\sigma)}\,.
\end{equation}
By arguing as in \eqref{alp}-\eqref{alp2}, and using the properties of $\vphi$ \eqref{cutoff}, we find
\begin{align}\label{alp5}
        \int_{\Omega_{m,R}\cap \mathcal{R}^F_{\sigma}(x_{0,m})}\rme\,|\nabla \A_\e(\nabla \ume)|^2\,dx\leq & \,C\,\int_{\Omega_{m,R}\cap \mathcal{R}^F_{\tau}(x_{0,m})}\rme\,f^2\,\,dx
        \\
        &+\frac{C}{(\tau-\sigma)^2}\,\int_{\Omega_{m,R}\cap \big(\mathcal{R}^F_{\tau}(x_{0,m})\setminus \mathcal{R}^F_{\sigma}(x_{0,m})\big)}\rme\,|\A_\e(\nabla \ume)|^2\,dx\,.\nonumber
\end{align}
for some constant $C$ depending only on $n$ and $p$. On the other hand, by using Equation \eqref{poinc:annuli} with the choice of $\d=(\tau-\s)/(10\,n\,(1+L_F))$, we have that
\begin{align}\label{alp6}
        \frac{C}{(\tau-\sigma)^2}\,\int_{\Omega_{m,R}\cap \big(\mathcal{R}^F_{\tau}(x_{0,m})\setminus \mathcal{R}^F_{\sigma}(x_{0,m})\big)} & \rme\,|\A_\e(\nabla \ume)|^2\,dx 
        \\
        & \leq \,C_1\,\int_{\Omega_{m,R}\cap \big(\mathcal{R}^F_{\tau}(x_{0,m})\setminus \mathcal{R}^F_{\sigma}(x_{0,m})\big)}\rme\,|\nabla \A_\e(\nabla \ume)|^2\,dx
        \\
        &+\frac{C_1}{(\tau-\sigma)^{n+2+a}}\left(\int_{\Omega_{m,R}\cap \big(\mathcal{R}^F_{\tau}(x_{0,m})\setminus \mathcal{R}^F_{\sigma}(x_{0,m})\big)}\rme\,|\A_\e(\nabla \ume)|\,dx\right)^2\,.\nonumber
\end{align}
with $C_1=C_1(n,\vrho,L_F)$. Coupling the inequalities \eqref{alp5}-\eqref{alp6}, we find
\begin{align}
        \int_{\Omega_{m,R}\cap \mathcal{R}^F_{\sigma}(x_{0,m})}\rme\,|\nabla \A_\e(\nabla \ume)|^2\,dx\leq \,C\,\int_{\Omega_{m,R}\cap \mathcal{R}^F_{\tau}(x_{0,m})}\rme\,f^2\,\,dx
        \\
         + \,C_1\,\int_{\Omega_{m,R}\cap \big(\mathcal{R}^F_{\tau}(x_{0,m})\setminus \mathcal{R}^F_{\sigma}(x_{0,m})\big)}\rme\,|\nabla \A_\e(\nabla \ume)|^2\,dx\nonumber
        \\
        +\frac{C_1}{(\tau-\sigma)^{n+2+a}}\left(\int_{\Omega_{m,R}\cap \big(\mathcal{R}^F_{\tau}(x_{0,m})\setminus \mathcal{R}^F_{\sigma}(x_{0,m})\big)}\rme\,|\A_\e(\nabla \ume)|\,dx\right)^2.\nonumber
    \end{align}
After adding the quantity $C_1\,\int_{\Omega_{m,R}\cap \mathcal{R}^F_{\sigma}(x_{0,m})}\rme\,|\nabla \A_\e(\nabla \ume)|^2\,dx$ to both sides of the last inequality, we get
\begin{align}\label{alp7}
\int_{\Omega_{m,R}\cap \mathcal{R}^F_{\sigma}(x_{0,m})}&\,\rme\,|\nabla \A_\e(\nabla \ume)|^2\,dx\leq \,\frac{C}{1+C_1}\,\int_{\Omega_{m,R}\cap \mathcal{R}^F_{\tau}(x_{0,m})}\rme\,f^2\,\,dx
        \\
         &+ \,\frac
    {C_1}{1+C_1}\,\int_{\Omega_{m,R}\cap \big(\mathcal{R}^F_{\tau}(x_{0,m})\big)}\rme\,|\nabla \A_\e(\nabla \ume)|^2\,dx\nonumber
        \\
        &+\frac{C_1}{(1+C_1)(\tau-\sigma)^{n+2+a}}\left(\int_{\Omega_{m,R}\cap \big(\mathcal{R}^F_{2R_0}(x_{0,m})\setminus \mathcal{R}^F_{R_0}(x_{0,m})\big)}\rme\,|\A_\e(\nabla \ume)|\,dx\right)^2\,.\nonumber
\end{align}
A standard iteration lemma \cite[Lemma 6.1, pp. 191]{giusti} enables us to deduce from \eqref{alp7} that
\begin{align}\label{alp8}
    \int_{\Omega_{m,R}\cap \mathcal{R}^F_{R_0}(x_{0,m})}\rme\,|\nabla \A_\e(\nabla \ume)|^2\,dx \leq & \, C\,\int_{\Omega_{m,R}\cap \mathcal{R}^F_{2R_0}(x_{0,m})}\rme\,f^2\,\,dx
    \\
    &+\frac{C}{R_0^{n+2+a}}\left(\int_{\Omega_{m,R}\cap \big(\mathcal{R}^F_{2R_0}(x_{0,m})\setminus \mathcal{R}^F_{R_0}(x_{0,m})\big)}\rme\,|\A_\e(\nabla \ume)|\,dx\right)^2\,,\nonumber
\end{align}
for some constant $C$ depending on $n,p,\vrho,\Omega$.

By letting $\e\to 0$ in \eqref{alp8}, and by using the properties \eqref{conv:vrhom}, \eqref{conv:Ae} and \eqref{cisiamm}, we get
\begin{align}\label{alp9}
         \int_{\Omega_{m,R}\cap \mathcal{R}^F_{R_0}(x_{0,m})}\vrho\,|\nabla \A(\nabla u_m)|^2\,dx \leq & C\,\int_{\Omega_{R}\cap \mathcal{R}^F_{2\,R_0}(x_{0,m})}\vrho\,f^2\,\,dx
    \\
    &+\frac{C}{R_0^{n+2+a}}\left(\int_{\Omega_{m,R}\cap \big(\mathcal{R}^F_{2R_0}(x_{0,m})\setminus \mathcal{R}^F_{R_0}(x_{0,m})\big)}\vrho\,|\A(\nabla u_m)|\,dx\right)^2\,.\nonumber
\end{align}
Then, by H\"older's inequality and \eqref{stimaloc:enm}, we find
\begin{align}\label{alp10}
    \Bigg(\int_{\Omega_{m,R}\cap \big(\mathcal{R}^F_{2R_0}(x_{0,m})\setminus \mathcal{R}^F_{R_0}(x_{0,m})\big)}& \vrho\,|\A(\nabla u_m)|\,dx\Bigg)^2\leq C_0\,\left(\int_{\Omega_{m,R}\cap \big(\mathcal{R}^F_{2R_0}(x_{0,m})\setminus \mathcal{R}^F_{R_0}(x_{0,m})\big)}\vrho\, |\nabla u_m|^p\,dx\right)^{2/p'}
    \\
    &\leq C'_0\,\left(\int_{\Omega_R}\left( |u|^p+|\nabla u|^p\right)\,\vrho\,dx\right)^{2/p'}+C'_1\,\left(\int_{\Omega_R} |f|^{p'}\,\vrho\,dx\right)^{2/p'}\,.\nonumber
\end{align}
with $C'_0,C'_1$ depending on $n,p,\Omega,\vrho$, but independent on $m$.  Equations \eqref{alp9}-\eqref{alp10} provide a $W^{1,2}(\Omega_{m,R}\cap \mathcal{R}^F_{R_0}(x_{0,m});\vrho)$-estimate for the stress field $\A(\nabla u_m)$, uniform with respect to $m\in \N$. Thereby, recalling that $\Omega_{m,R}\nearrow \Omega_R$, that $x_{0,m}\to x_0\in \mathrm{Graph}\,F$, and the property  \eqref{conve2}, by letting $m\to \infty$ in \eqref{alp9}-\eqref{alp10} we finally obtain
\begin{align}
                 \int_{\Omega_{R}\cap \mathcal{R}^F_{R_0}(x_{0})}\vrho\,|\nabla \A(\nabla u)|^2\,dx \leq &\, C\,\int_{\Omega_{R}\cap \mathcal{R}^F_{2\,R_0}(x_{0})}\vrho\,f^2\,\,dx
    \\
    &+\frac{C}{R_0^{n+2+a}}\left(\int_{\Omega_{R}\cap \big(\mathcal{R}^F_{2R_0}(x_{0})\setminus \mathcal{R}^F_{R_0}(x_{0})\big)}\vrho\,|\A(\nabla u)|\,dx\right)^2\,,\nonumber
\end{align}
and finally by H\"older's inequality, we obtain
\begin{align}
         \Bigg(\int_{\Omega_{R}\cap B_{R_0}(x_0)} \vrho\,|\A(\nabla u)|\,dx\Bigg)^2\leq \left( \int_{\Omega_R\cap B_{R_0}(x_0)}\vrho\,dx\right)^{2/p}\,\left(\int_{\Omega_R\cap B_{R_0}(x_0)}|\nabla u|^p\,\vrho\,dx\right)^{2/p'}\,,\nonumber
\end{align}
where $C$ depends on $n,p,\Omega,\vrho$, that is our thesis.
\\

\noindent\emph{Step 2.} In this step we remove assumption \eqref{f:cinf}. This step agrees with the proof of Step 3 of Theorem \ref{thm:loc1}, with the only difference that one needs to consider rectangles $\mathcal{R}^F_{R_0}(x_0)$ in place of balls. The details are omitted.

\end{proof}

\section{Some remarks on the Dirichlet problem} \label{sec:dir} 

In this section we give some remark on Dirichlet problem for Equation \eqref{eq_main1}. We notice that it is possible to prove a version of Reilly's identity which would be suitable when one considers a Dirichlet problem. However, as we are going to explain below, it seems to us that the type of weight that we are considering is not suitable for studying regularity for Dirichlet problems.

Reilly's identity for Dirichlet problems can be obtained as follows. We first notice that will make use of the following identity, whose proof can be found in \cite[Lemma 4.3]{accfm} for more general operators.
First observe that, for $V\in C^2(\overline{\Omega})$, by subtracting
\begin{equation*}
     \mathrm{div} V\,(V\cdot \nu)=\mathrm{div}_T V\,(V\cdot \nu)+(\partial_\nu V\cdot \nu)\,(V\cdot \nu)\,,
\end{equation*}
with
\begin{equation*}
    \nabla V\,V\cdot \nu=\nabla_T V\,V_T\cdot \nu+(\partial_\nu V)\cdot\nu\,(V\cdot \nu)\,,
\end{equation*}
we get
\begin{equation}\label{acc}
    \mathrm{div} V\,(V\cdot \nu)-\nabla V\,V\cdot \nu=\mathrm{div}_T V\,V\cdot \nu-\nabla_T V\,V_T\cdot \nu\,.
\end{equation}
Now assume that $V$ is of the form $V(x)=\mathfrak a(x)\,\nabla v$, for two function $\mathfrak a\in C^2(\overline{\Omega})$, and $v\in C^2(\overline{\Omega})$. Owing to \cite[Lemma 4.3]{accfm} and \eqref{acc}, if $v=0$ on $\partial \Omega$ there holds
\begin{align}\label{re:temp3}
\mathrm{div}\big(\mathfrak a\,\nabla v \big) \,\mathfrak a\,\nabla v\cdot \nu
   &-a\, \nabla\big[ \mathfrak a\,\nabla v\big] \,\nabla_T v\cdot \nu
   \\
\mathrm{div}_T\big(\mathfrak a\,\nabla v \big) \,\mathfrak a\,\nabla v\cdot \nu
   &-a\, \nabla_T\big[ \mathfrak a\,\nabla v\big] \,\nabla_T v\cdot \nu\nonumber
    \\ \nonumber
    & =\mathfrak a^2\,|\nabla v|^2\,\mathrm{tr}\,\mathcal{B}\nonumber
\end{align}
on $\partial\Omega$. Here $\mathrm{tr}\,\mathcal{B}$ is the mean curvature of $\partial \Omega$.

As an immediate consequence we obtain the following proposition.
\begin{proposition}\label{prop:funddir}
    Let $\Omega$ be a bounded domain of class $C^2$. Let $\mathfrak a\in C^2(\overline{\Omega})$, $h\in C^2(\overline{\Omega})$, $\vrho=e^{-h}$, and assume that $u\in C^2(\overline{\Omega})$ vanishes on $\partial \Omega$. Then
    \begin{align}\label{id:ddiirr}
             \int_\Omega \vrho^{-1}\big[ \mathrm{div}\big(\vrho\,\mathfrak a\,\nabla u\big)\big]^2\,dx=\int_\Omega& \vrho\,\mathrm{tr}\big[\big( \nabla(\mathfrak a\,\nabla u)\big)^2\big]\,dx+\int_{\partial \Omega}\vrho\,\mathfrak a^2\,|\nabla u|^2\,\mathrm{tr}\,\B\,d\H^{n-1}+\int_\Omega\vrho\,\mathfrak{a}^2\,\nabla^2 h\,\nabla u\cdot \nabla u\,dx
             \\
             &+\int_{\partial \Omega} \mathfrak{a}^2\, (\partial_\nu\vrho)\,|\nabla u|^2\,d\H^{n-1}\,.\nonumber
    \end{align}
\end{proposition}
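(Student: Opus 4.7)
The plan is to apply the pointwise Lemma \ref{lemma:1} with the vector field $V=\mathfrak a\,\nabla u$, integrate the resulting identity over $\Omega$, and process the boundary integral using the Dirichlet condition. With this choice, \eqref{id:fund} instantly produces the $\Omega$-integrands $\vrho\,\mathrm{tr}[(\nabla(\mathfrak a\,\nabla u))^2]$ and $\vrho\,\mathfrak a^2\,\nabla^2 h\,\nabla u\cdot\nabla u$ appearing on the right-hand side of \eqref{id:ddiirr}, while the divergence theorem converts the term $\mathrm{div}(V\,\mathrm{div}(\vrho V)-\vrho\,\nabla V\,V)$ into
$$\int_{\partial\Omega}\bigl[(V\cdot\nu)\,\mathrm{div}(\vrho V)-\vrho\,\nabla V\,V\cdot\nu\bigr]\,d\H^{n-1}\,,$$
which must then be identified with the two boundary integrals on the right-hand side of \eqref{id:ddiirr}.

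The key observation is that $u=0$ on $\partial\Omega$ forces $\nabla_T u=0$, so that on $\partial\Omega$ one has $\nabla u=(\partial_\nu u)\,\nu$, hence $V=\mathfrak a(\partial_\nu u)\,\nu$, $V_T=0$, and $|V|^2=\mathfrak a^2|\nabla u|^2$. Expanding $\mathrm{div}(\vrho V)=\vrho\,\mathrm{div}\,V+\nabla\vrho\cdot V$ would split the boundary integrand as
$$\vrho\bigl[(V\cdot\nu)\,\mathrm{div}\,V-\nabla V\,V\cdot\nu\bigr]+(V\cdot\nu)\,(\nabla\vrho\cdot V)\,.$$
For the bracket I would invoke identity \eqref{acc}: since $V_T=0$, its right-hand side collapses to $(V\cdot\nu)\,\mathrm{div}_T V$. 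Because $V$ is purely normal on $\partial\Omega$, Leibniz together with the orthogonality of tangential gradients of scalars to $\nu$ yield $\mathrm{div}_T V=\mathfrak a(\partial_\nu u)\,\mathrm{div}_T\nu=\mathfrak a(\partial_\nu u)\,\mathrm{tr}\,\B$, so the bracket contributes $\vrho\,\mathfrak a^2|\nabla u|^2\,\mathrm{tr}\,\B$. The remaining piece $(V\cdot\nu)(\nabla\vrho\cdot V)$ equals $\mathfrak a^2|\nabla u|^2\,\partial_\nu\vrho$ by the normality of $V$. Assembling produces exactly \eqref{id:ddiirr}.

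There is no real obstacle in this argument, which is purely algebraic-differential: the $C^2$ regularity of $\mathfrak a$, $h$, $u$ and of $\partial\Omega$ make every operation classical. The essential insight is that \eqref{acc} is tailor-made to convert Euclidean derivatives of $V$ at the boundary into tangential ones, which then interact cleanly with the shape operator $\nabla_T\nu=\B$ precisely because $V$ is purely normal when $u$ vanishes on $\partial\Omega$. The only point requiring care is the separate treatment of the $\nabla\vrho\cdot V$ contribution, which is responsible for the extra boundary term $\int_{\partial\Omega}\mathfrak a^2(\partial_\nu\vrho)|\nabla u|^2\,d\H^{n-1}$ that has no counterpart in the Neumann identity \eqref{id:fundneu}.
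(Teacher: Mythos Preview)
Your proposal is correct and follows essentially the same approach as the paper: apply Lemma~\ref{lemma:1} with $V=\mathfrak a\,\nabla u$, integrate, and process the boundary term via \eqref{acc} together with the fact that $V$ is purely normal on $\partial\Omega$ when $u$ vanishes there. The paper packages the boundary computation by citing \cite[Lemma~4.3]{accfm} (yielding \eqref{re:temp3}), whereas you carry it out directly; the content is the same.
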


Unfortunately, \eqref{id:ddiirr} is not helpful whenever the weight $\vrho$ vanishes at the boundary, since the last term in \eqref{id:ddiirr} can be difficult to be bounded. 

For instance, if $\vrho(x)=\mathrm{dist}(x,\partial \Omega)^a=d(x)^a$, $a>0$, then $\vrho$ is $A_2$-Muckenhoupt when $a \in (0,1)$, and in this case one has a trace theorem. On the other hand, in this case $\partial_\nu\vrho\equiv +\infty$ on $\partial \Omega$ which makes impossible the chance of using a trace theorem and obtain uniform upper bounds in terms of the desired quantities. This is consistent with the example for Dirichlet problem given in the Introduction.

\section*{Acknowledgments} 
\noindent The authors have been partially supported by the ``Gruppo Nazionale per l'Analisi Matematica, la Probabilit\`a e le loro Applicazioni'' (GNAMPA) of the ``Istituto Nazionale di Alta Matematica'' (INdAM, Italy) and by the Research Project of the Italian Ministry of University and Research (MUR) Prin 2022 
``Partial differential equations and related geometric-functional inequalities'', grant number 20229M52AS\_004.

\end{document}